\documentclass[11pt]{amsart}
\usepackage[margin=1in]{geometry}
\usepackage{graphicx}
\usepackage{soul}
\usepackage{amsthm, amsmath, amssymb, bm, bbm,euscript}
\usepackage{thmtools}
\usepackage{tikz}
\usetikzlibrary{positioning}
\usetikzlibrary{arrows.meta}
\usepackage{mathrsfs}
\usepackage{stmaryrd}

\usepackage{nicematrix}
\usepackage{tikz-cd}

\usepackage[utf8]{inputenc}
\usepackage[T1]{fontenc}
\usepackage[textsize=scriptsize,backgroundcolor=orange!5]{todonotes}

\usepackage{hyperref}
\usepackage{url}

\usepackage{mathtools}

\usepackage[noabbrev,capitalize]{cleveref}
\crefname{equation}{}{}
\usepackage{xcolor}

\usepackage{xcolor,graphics, graphicx}
\usepackage{floatrow}
\usepackage{verbatim}
\newfloatcommand{capbtabbox}{table}[][\FBwidth]

\numberwithin{equation}{section}

\newtheorem{theorem}{Theorem}[section]
\newtheorem{proposition}[theorem]{Proposition}
\newtheorem{lemma}[theorem]{Lemma}
\newtheorem{lemma/defn}[theorem]{Lemma/Definition}

\newtheorem{corollary}[theorem]{Corollary}
\newtheorem{conjecture}[theorem]{Conjecture}
\newtheorem*{question*}{Question}

\theoremstyle{definition}
\newtheorem{definition}[theorem]{Definition}

\newtheorem{notation}[theorem]{Notation}

\theoremstyle{remark}
\newtheorem{remark}[theorem]{Remark}

\newcommand{\mc}{\mathcal}

\renewcommand{\subset}{\subseteq}
\renewcommand{\supset}{\supseteq}

\newcommand{\R}{\mathbb{R}}
\newcommand{\N}{\mathbb{N}}
\newcommand{\Z}{\mathbb{Z}}
\newcommand{\C}{\mathbb{C}}
\newcommand{\F}{\mathbb{F}}

\newcommand{\Q}{\mathbb{Q}}

\DeclareMathOperator{\ind}{ind}

\DeclareMathOperator{\es}{\varnothing}

\newcommand{\kbar}{\overline{k}}

\newcommand{\Hom}{\mathrm{Hom}}

\renewcommand{\subset}{\subseteq}

\newcommand{\actson}{\curvearrowright}
\newcommand{\Aut}{\mathrm{Aut}}

\newcommand{\Qbar}{\overline{\mathbb{Q}}}
\newcommand{\Gal}{\mathrm{Gal}}

\renewcommand{\mod}{\ {\rm mod} \ }

\newcommand{\disc}{{\rm disc}}

\newcommand{\Heis}{{\rm Heis}}

\newcommand{\Br}{{\rm Br}}

\usepackage[OT2,T1]{fontenc}
\DeclareSymbolFont{cyrletters}{OT2}{wncyr}{m}{n}
\DeclareMathSymbol{\Sha}{\mathalpha}{cyrletters}{"58}

\title{A refined Malle conjecture for Heisenberg groups}

\author{Jack B. Miller}
\address{Department of Mathematics \\
    Harvard University \\
    Science Center Room 325 \\
    1 Oxford Street \\
    Cambridge, MA 02138 USA}
\email{jmiller@math.harvard.edu}

\author{Tim Santens}
\address{
	University of Cambridge \\ 
	DPMMS \\
	Centre for Mathematical Sciences\\
	Wilberforce Road \\
	Cambridge \\
	CB3 0WB \\ UK}
\email{ts996@cam.ac.uk}
	
\date{\today}
\subjclass[2020]{Primary 11N45; Secondary 11R32, 11R34, 14G12, 20D15.}

\allowdisplaybreaks

\begin{document}

\begin{abstract}
Based on a conjecture of Loughran and the second author, we give an explicit prediction for the leading constant in Malle's conjecture for Galois $\mathcal{H}$-extensions of $\mathbb{Q}$ ordered by discriminant, where $\mathcal{H}$ is the $3\times 3$ Heisenberg group over $\mathbb{F}_4$.
The predicted leading constant is not a single Euler product, but rather a sum of two distinct Euler products.
Our methods also give an efficient algorithm for computing the conjectural Loughran–Santens leading constant for many $2$-groups of nilpotency class $2$.
\end{abstract}

\maketitle

\section{Introduction}

Let $G \leq S_n$ be a transitive permutation group. Malle's conjecture predicts an asymptotic formula
\[
N(G,X) \sim c(G)\, X^{a(G)} (\log X)^{b(G)-1}
\]
for the number $N(G,X)$ of degree $n$ number fields with Galois group $G$ and absolute discriminant bounded by $X$, where the exponents $a(G)$ and $b(G)$ are determined by the permutation-theoretic structure of $G$ \cite{Malle2002,Malle2004}.
Since Kl\"uners' counterexample to the original logarithmic exponent \cite{Klueners2005}, a great deal of work has gone into formulating corrected versions of Malle's conjecture and proving them in new cases; see, for example, the history of number field counting results given in \cite[\S1.4]{ALOWWinductivecounting}.
Even when the exponents are understood, however, the \textit{leading constant} for $G$, denoted $c(G)$, remains far more mysterious.

In many known cases of Malle's conjecture, the leading constant for $G$ is given by an Euler product
\[
C_{\rm MB}(G) = \alpha(G) \prod_{v} C_{{\rm MB},v}(G),
\]
where $\alpha(G) \in \Q_{> 0}$ is a group-cohomological constant, and for each place $v$ of $\Q$ the local factor $C_{{\rm MB},v}(G)$ is interpreted as a normalized count of $G$-\'etale algebras over $\Q_v$, often referred to as the (normalized) \textit{$v$-adic local mass} of $G$.
When the identity $c(G) = C_{\rm MB}(G)$ holds for a permutation group $G$, it is very likely a numerical shadow of the \textit{Malle--Bhargava principle for $G$} \cite{Malle2002,Malle2004,BhargavaMassFormulaS_n}.
We give a rigorous definition of the Malle--Bhargava principle that we call \textit{independence of local events} (\cref{def:local events and independence and almost-everywhere independence}).
Examples of permutation groups $G$ known to satisfy independence of local events include:
\begin{itemize}
    \item Elementary abelian groups \cite{WrightAbelianExtensions,WoodAbelianLocalProbabilities};
    \item $S_n$ for $2 \le n \le 5$, where $S_n$ is in its standard representation \cite{DavenportHeilbronn,BhargavaQuarticRings,BhargavaQuinticRings,BSWgeometryofnumbers1};
    \item $D_4$ in its octic representation \cite{ShankarVarma2025}.
\end{itemize}

There is a well-known conjectural obstruction to independence of local events for $G$ known as \textit{concentration}.
We define the \textit{index function} $\ind :G \to \Z_{\ge 0}$ via
\[
\ind(g) := n - \#\{\text{cycles of }g \in S_n\}.
\]
Malle \cite{Malle2002} conjectured that the exponent of $X$ appearing in the asymptotic for $N(G,X)$ is given by
\[
a(G) := 
(\min\{\ind(g) : g\in G\backslash\{1\}\})^{-1},
\]
and we refer to the set $\{g \in G : \ind(g) = a(G)^{-1}\}$ as the \textit{minimal index elements} of the permutation group $G$.
If the minimal index elements generate $G$ as a group, we say that $G$ is \textit{non-concentrated} \cite{ALOWWinductivecounting}, otherwise $G$ is \textit{concentrated}.
An equivalent terminology for $G$ being non-concentrated is that the $G$-discriminant is a \textit{balanced height function} \cite[\S9.1]{LoughranSantens2024}.

All currently known cases of Malle's conjecture for $G$-number fields ordered by discriminant fall into one of the following two categories:
\begin{enumerate}
    \item $G$ is concentrated;
    \item $G$ satisfies independence of local events.
\end{enumerate}
In this paper we find a permutation group $G$ such that, conditional on a conjecture of Loughran and the second author \cite{LoughranSantens2024}, we show that:
\begin{enumerate}
    \item[(3)] $G$ is non-concentrated and fails to satisfy independence of local events.
\end{enumerate}

\begin{conjecture}
    \label{conj:explicit Heis_4 conjecture}
    Let $\Heis_4$ be the order $64$ Heisenberg group over $\F_4$ viewed in its regular permutation representation $\Heis_4 \le S_{64}$.
    We conjecture that $\Heis_4$ fails to satisfy independence of local events, and that
    \[
    \#\left\{K/\Q \textnormal{ Galois} :
    \begin{matrix}
        |\disc(K/\Q)| \le X, \\
        \Gal(K/\Q) \simeq \Heis_4
    \end{matrix}
    \right\} 
    \sim \frac{1}{2} \, (C_0 + C_\beta) 
    \, X^{1/32} \, (\log X)^8,
    \]
    where
    \[
    C_0 = \alpha_0 
    \prod_{p > 2} \left(
        1 + \frac{9}{p} + \frac{9}{p^{3/2}}
    \right)
    \left(
        1 - \frac{1}{p}
    \right)^9, \qquad 
    C_{\beta} =
    \alpha_{\beta}
    \prod_{p > 2} \left(
        1 + \frac{9}{p} + \frac{3}{p^{3/2}}
    \right)
    \left(
        1 - \frac{1}{p}
    \right)^9,
    \]
    \[
    \alpha_0 = \frac{313+54\sqrt{2}+54\sqrt[4]{2}+9\sqrt{2}\sqrt[4]{2}}{2^{71} \cdot 3^4 \cdot 5},
    \qquad 
     \alpha_{\beta} = \frac{247 + 30\sqrt[4]{2} + 36\sqrt{2} - 3\sqrt{2}\sqrt[4]{2}}{2^{71} \cdot 3^3 \cdot 5}.
    \]
\end{conjecture}

We prove (\cref{thm:derivation from Loughran Santens conjecture}) that \cref{conj:explicit Heis_4 conjecture} is a special case of \cite[Conj.\ 9.3, Conj.\ 9.10]{LoughranSantens2024} for the base field $\Q$ and permutation group $G = \Heis_4 \le S_{64}$.
In particular, we note that the stack $B\Heis_4$ has a transcendental partially unramified Brauer--Manin obstruction relevant to counting by Galois discriminant (\cref{lem:partially_ramified_Brauer_group}), and the surjective locus of $BG$ avoids thin sets expected to contain a positive proportion of $\Q$-rational points when ordering by discriminant (\cref{prop:thin sets do not contribute}).

\subsubsection*{Interpretation of the leading constant}
\label{subsubsec:interpretation of leading constant}

If the stack $BG$ did not have a Brauer--Manin obstruction, then the leading constant $c(G) = \frac{1}{2}(C_0 + C_{\beta})$ would equal $C_0$.
Instead, the constant $C_{\beta} \approx 0.633\, C_0$ comes from a transcendental Brauer--Manin obstruction.
By interpreting the constant $c(G)$ as in \cite[Conj.\ 9.10]{LoughranSantens2024}, we can directly observe a failure of independence of local events.
Indeed, the Brauer--Manin obstruction causes the conjectured covariance between odd primes $p_1 \ne p_2$ ramifying in a random Galois $\Heis_4$-number field ordered by discriminant to be equal to
\[
\frac{36 C_0C_{\beta}}{(C_0+C_\beta)^2} \frac{1}{p_1^{3/2} p_2^{3/2}}
\left(1 + O\left(\frac{1}{p_1} + \frac{1}{p_2}\right)\right).
\]
The fact that this covariance is nonzero for infinitely many pairs of distinct odd primes implies that $\Heis_4$ fails to satisfy independence of local events.
In fact, $\Heis_4$ fails to satisfy the weaker condition of \textit{almost-everywhere independence of local events} (\cref{def:local events and independence and almost-everywhere independence}).
\newline

We note that \cref{conj:explicit Heis_4 conjecture} is not completely out of reach.
Indeed, it seems likely that the nilpotent counting methods of Koymans--Pagano \cite{KoymansPagano2023} combined with the character sum method can be used to prove this conjecture.
For example, Fouvry--Koymans \cite{FouvryKoymans} used these methods to count $\Heis_3$-extensions ordered by discriminant and Hansen--Zanoli \cite{Hansen2025Counting} recently used them to prove a multi-height version of Malle's conjecture for $D_4 \simeq \Heis_2$.
It would be interesting to confirm this suspicion.
Recent work of Alberts--Bucur \cite{AlbertsBucurMDS} may also be applicable to proving Malle's conjecture for $\Heis_4 \le S_{64}$ (with inexplicit leading constant).

Previous work considered the closely related question of whether failure of almost-everywhere independence of local events could occur for \textit{some} balanced height function.
\begin{itemize}
    \item In correspondence with Alberts, Wood \cite[\S7.6]{AlbertsRandomGroupMBmodel} constructed a balanced height for $\Z/4\Z$-number fields that Tavernier \cite[\S4.5]{TavernierAbelianRestrictedRam} proved fails to satisfy almost-everywhere independence of local events when ordering by the height.
    \item Loughran and the second author \cite[Conj.\ 1.2]{LoughranSantens2024} constructed a balanced height for $A_4$-number fields that conjecturally fails to satisfy almost-everywhere independence of local events when ordering by the height.
\end{itemize}
The first example is due to an algebraic Brauer--Manin obstruction, and the second example is due to a transcendental Brauer--Manin obstruction.
The balanced height functions are chosen so that a non-trivial Brauer class of the relevant classifying stack is unramified for every minimal weight conjugacy class of the height, but ramifies at some other conjugacy class, resulting in a leading constant that is a finite sum of distinct Euler products (cf.\ \cite[Rem.\ 8.25]{LoughranSantens2024}).

While the above balanced height functions exhibit a failure of almost-everywhere independence of local events, they do not correspond to Artin conductors \cite[Ex.\ 4.19]{LoughranSantensSurvey}, let alone a discriminant height function.

In order to arrive at \cref{conj:explicit Heis_4 conjecture}, we carry out a systematic study of the partially unramified Brauer groups of $BG$ that are responsible for influencing the asymptotics of discriminant orderings of $G$-number fields.

\subsection{Heisenberg groups in characteristic $2$}

The explicit form of \cref{conj:explicit Heis_4 conjecture} comes from the study of Heisenberg groups in their regular representations.

\begin{definition}
    \label{def:Heisenberg group}
    Let $q$ be a prime power.
    We define the \textit{Heisenberg group} over $\F_q$ to be the matrix group
    \[
    \Heis_q := \left\{
    \begin{pmatrix}
        1&x&z\\
        &1&y\\
        &&1
    \end{pmatrix} : x,y,z \in \F_q
    \right\}.
    \]
\end{definition}

Previously, Fouvry and Koymans \cite{FouvryKoymans} developed theory surrounding Malle's conjecture for the permutation groups $\Heis_{\ell} \le S_{\ell^2}$ where $\ell$ is an odd prime and $\Heis_{\ell}$ acts on the affine plane $\F_{\ell}.(1,0,1) \oplus \F_{\ell}.(0,1,1) \subset \F_{\ell}^3$.
In order to find interesting Brauer--Manin obstructions over $\Q$, we instead focus on $\Heis_{2^n} \le S_{8^n}$ where $\Heis_{2^n}$ is in its regular representation.

We summarize our results for Heisenberg groups that play a role in \cite[Conj.\ 9.3]{LoughranSantens2024} as follows.

\begin{theorem}
    \label{thm:intro theorem for Heis_2^n results}
    Let $n \in \Z_{\ge 1}$, and $C \subset \Heis_{2^n}$ be the subset of order $2$ elements.
    \begin{enumerate}
        \item The Brauer group $\Br_{\rm un}^{e} B\Heis_{2^n}$ is trivial.
        \item The Brauer group $\Br_C^{e} B\Heis_{2^n}$ is isomorphic to the group of $\F_2$-bilinear forms on $\F_{2^n}$ modulo the pullback of $\F_2$-linear functionals $\F_{2^n} \to \F_2$ along the multiplication form $\F_{2^n} \times \F_{2^n} \to \F_{2^n}$.
        In particular, $|\Br_C^{e} B\Heis_{2^n}| = 2^{n(n-1)}$.
        \item Let $v$ be a place of $\Q$.
        When $v \ne 2$, there are simple formulae for the $v$-adic mass (and Brauer transforms) of $B\Heis_{2^n}$ (see \cref{cor:odd prime local Brauer transforms,cor:archimedean local Brauer transforms}).
        When $v=2$, there is an efficient group-theoretic algorithm for computing the $2$-adic mass (and Brauer transforms) of $B\Heis_{2^n}$ (see \cref{cor:2-adic mass in terms of admissible triples}).
    \end{enumerate}
\end{theorem}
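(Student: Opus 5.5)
The plan is to compute the three parts of the statement in turn, working throughout with the standard description of the Brauer group of a classifying stack. For a finite constant group $G$ over $\Q$, the normalized Brauer group $\Br^e BG$ (classes vanishing at the identity section) is an extension of the transcendental part $H^2(G,\Q/\Z)$ by the algebraic part $H^1(\Q,\Hom(G,\Q/\Z(1)))$. Partial unramifiedness along a set of conjugacy classes $C$ will be tested by residues. For $g\in C$, the residue of a class at $g$ is computed by restricting it to the "inertia" subgroup generated by $g$ and its centralizer, with the cyclotomic twist on $\langle g\rangle$. Since every $g\in C$ has order $2$ and $\mu_2\subset\Q$, the twist is harmless, and we will read off unramifiedness at $g$ as follows:
\begin{itemize}
\item the algebraic component $\chi$ satisfies $\chi(g)=0$;
\item the transcendental component $b$ satisfies $b|_{\langle g,h\rangle}=0$ for every $h$ commuting with $g$.
\end{itemize}

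First I will handle the algebraic part and fix coordinates. Write elements of $\Heis_{2^n}$ as $(x,y,z)$, so that $(x,y,z)^2=(0,0,xy)$. The involutions are therefore exactly the nonidentity elements with $xy=0$. These involutions generate $\Heis_{2^n}$, and they are the minimal index elements in the regular representation. Hence any character $\chi$ vanishing on $C$ is trivial, and the algebraic part of both $\Br^e_C$ and $\Br^e_{\rm un}$ vanishes. Everything then reduces to computing subgroups of $H^2(\Heis_{2^n},\Q/\Z)$.

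Next I will parametrize $H^2(\Heis_{2^n},\Q/\Z)$. I plan to use the Hochschild--Serre sequence for the central extension $1\to Z\simeq\F_q\to\Heis_q\to V\simeq\F_q^2\to 1$, together with the known Schur multiplier of $\Heis_q$. The goal is to identify the classes that can survive the residue conditions with cocycles of the form
\[
c\big((x,y,z),(x',y',z')\big)=B(x,y'),
\]
where $B$ is an $\F_2$-bilinear form on $\F_q$ composed with $\F_2\hookrightarrow\Q/\Z$. When $B=\lambda\circ{\rm mult}$ for a linear functional $\lambda$, the cocycle $xy'$ is the one defining $\Heis_q$ itself, so $\lambda(xy')$ becomes a coboundary after inflation; this accounts for the quotient in part (2). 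The remaining classes come from alternating or symmetric forms on $V$ and from classes involving $Z$. For these, I would show directly that each nonzero one restricts nontrivially to some abelian subgroup $\langle g,h\rangle$ with $g$ an involution, for instance inside $\F_q\times 0\times Z$ or $0\times\F_q\times Z$.

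With this parametrization, both residue computations reduce to commutator pairings. Restricted to an abelian subgroup, the class attached to $B$ is detected by its commutator pairing $B(x,y'')-B(x'',y)$.
\begin{itemize}
\item If $g=(x,0,z)$ with $x\ne0$, the elements commuting with $g$ have $y''=0$. The pairing is then $B(x,0)-B(x'',0)=0$, and symmetrically for $g=(0,y,z)$. So every $B$ is unramified along $C$, which gives part (2) and the count $2^{n^2-n}$.
\item For part (1), we also need non-involutions $g=(x,y,*)$ with $xy\ne0$. Their centralizer modulo $Z$ is $\{(tx,ty)\}$, and vanishing of $B(x,ty)-B(tx,y)$ for all $x,y,t$ should force $B$ to factor through multiplication. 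This gives $\Br^e_{\rm un}=0$.
\end{itemize}
The main obstacle is exactly the identification of $H^2$ in the previous paragraph: ruling out every other cohomology class requires a careful model of the Schur multiplier of $\Heis_{2^n}$. For this I would first try the Hopf formula and the explicit commutator calculus of class-$2$ groups.

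Part (3) is essentially a pointer to later sections. For odd $p$, the plan is to use that ramification is tame, so local $G$-\'etale algebras correspond to pairs $(\sigma,\tau)$ with $\sigma\tau\sigma^{-1}=\tau^p$, which yields closed formulae. At the real place, local algebras correspond to involution classes. At $2$, wild ramification forces the admissible-triple algorithm of \cref{cor:2-adic mass in terms of admissible triples}.
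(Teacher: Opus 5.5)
Your candidate classes are the right ones: the cocycle $(xy',B(x,y'))$ is exactly the paper's $Q_\beta(x,y)=b(x,y)$. Your part (1) computation on the centralizer $\{(tx,ty)\}$ is also the paper's.

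\textbf{The gap.} The reverse inclusion in part (2), which you yourself flag, is never proved. Nothing in the proposal shows that every element of $\Br^e_C B\Heis_{2^n}$ is represented by some $B$. Part (1) inherits this gap, since it needs an unramified class to be of the form $B$ before the centralizer computation applies.

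The paper closes this with \cref{lem:Brauer_group_discriminant}. Since $C$ consists of involutions and generates $G$, the Galois invariants of a $C$-marked $\mu_\infty$-extension form a \emph{constant} $\F_2$-central extension $\widetilde G$. In $\widetilde G$, lifts of involutions have order $2$ and the classes in $C$ split. As $W\setminus\{0\}\subset C$, the preimage of $W$ is central and elementary abelian. The quadratic-map classification then gives $Q_{\widetilde G}=(Q_G,Q_\beta)$, and the conditions collapse to $Q_\beta(x,0)=Q_\beta(0,y)=0$.

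Even in your Hochschild--Serre framework, no model of the Schur multiplier is needed:
\begin{enumerate}
\item The central involutions $(0,0,z)$ lie in $C$ and have centralizer $G$. Vanishing residues there force the preimage of $Z$ in the $\Q/\Z$-extension to be central, hence abelian, hence split by divisibility. So the class is inflated from $V$.
\item An alternating form on $V$ with vanishing residues at $(x,0,*)$ and $(0,y,*)$ is isotropic on $\F_{2^n}\times 0$ and on $0\times\F_{2^n}$. It therefore equals $B(x,y')+B(x',y)$ for some bilinear $B$.
\item For $\lambda\in Z^\vee$, $d_2(\lambda)$ is the form attached to $B=\lambda\circ\mathrm{mult}$.
\end{enumerate}
For the count $2^{n^2-n}$ you also need injectivity modulo these forms, i.e.\ $E_\infty^{2,0}=H^2(V)/d_2(Z^\vee)\hookrightarrow H^2(G)$. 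The paper gets this from its section argument.

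\textbf{The residue test over $\Q$.} Your test is not valid as stated, and ``$\mu_2\subset\Q$, so the twist is harmless'' does not justify it. There is no canonical splitting of a class into algebraic and transcendental parts. Moreover, the residue at an involution $g$ has an arithmetic component that commutator pairings on $\langle g,h\rangle$ cannot see (note $H^2(\Z/2,\Q/\Z)=0$).

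Concretely, suppose $g$ lifts to an element of order $4$ in the $\F_2$-extension. Then $\Gamma_\Q$ acts on the fibre of $G_\beta(-1)\to G(-1)$ over $g$ through the quadratic character of $\Q(i)$, without fixed points; for instance the order-$2$ lifts $\pm i\widetilde g$ are swapped. So no Galois-invariant marking exists. Equivalently, adding the algebraic class $\ell\cup(-1)$, for a character $\ell$ with $\ell(g)\neq 0$, changes the residue but not the image in $H^2(G,\Q/\Z)$.

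The correct extra requirement is the paper's condition (1): lifts of involutions have order $2$. For your cocycle this holds because $B(x,0)=B(0,y)=0$. So your conclusion that each $B$ lies in $\Br^e_C$ survives, but it must be checked this way.

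\textbf{Smaller points.} $\Br^e BG$ maps into $H^2(G,\Q/\Z(1))^{\Gamma_\Q}$, not onto $H^2(G,\Q/\Z)$. This is harmless here, since your classes are $2$-torsion and visibly lift. For part (3), the Brauer transforms at $\infty$ and at odd $p$ also require evaluating $\beta$ on local homomorphisms. The paper gets these from solvability of the $\F_2$-embedding problem at $\infty$, and from the twisted mass formula together with \cref{lem:residue_computation} at odd $p$.
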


By \cref{prop:thin sets do not contribute} and \cite[Conj.\ 9.3]{LoughranSantens2024}, \cref{thm:intro theorem for Heis_2^n results}(2) predicts that the leading constant in Malle's conjecture for Galois $\Heis_{2^n}\,$-number fields ordered by discriminant decomposes as a finite sum of $2^{n(n-1)}$ Euler products.
In the case of $n=1$, this prediction for the leading constant agrees with the result of Shankar--Varma \cite[Thm.\ 1.1]{ShankarVarma2025} on Galois $D_4$-fields ordered by discriminant.
Indeed, in the case of $D_4 \simeq \Heis_2$ there is only one Euler product, and independence of local events holds.
We note that this is an exception rather than the rule:

\begin{corollary}
Let $n \in \Z_{\ge 1}$, and assume that \cite[Conj.\ 9.10]{LoughranSantens2024} holds for the base field $\Q$, constant group scheme $\Heis_{2^n}$, and height given by the Galois discriminant.
Then the permutation group $\Heis_{2^n}\le S_{8^n}$ satisfies independence of local events if and only if $n=1$.
\end{corollary}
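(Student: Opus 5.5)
The proof splits according to whether $n=1$ or $n \ge 2$. In both cases the input is the description of the leading constant in \cite[Conj.\ 9.10]{LoughranSantens2024}. There the constant is a sum over the partially unramified Brauer group $\Br_C^{e} B\Heis_{2^n}$, with $C$ the minimal index elements, of Euler products of local Brauer transforms. Independence of local events (\cref{def:local events and independence and almost-everywhere independence}) says precisely that the limiting distribution of local behaviours is the product of the normalized local masses. Under the conjecture, that distribution is a mixture of product measures twisted by the characters $\langle \beta, \cdot\rangle_v$ for $\beta$ in the Brauer group.

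For $n=1$ I would invoke \cref{thm:intro theorem for Heis_2^n results}(2). It gives $|\Br_C^{e} B\Heis_2| = 2^{0} = 1$, so only the trivial class contributes. The conjectural constant is then a single Euler product $C_{\rm MB}$, and the conjectured distribution of local events is the product measure, which is independence. This direction also agrees with Shankar--Varma. I would also check via \cref{prop:thin sets do not contribute} that no thin set correction intervenes, so that Conj.\ 9.10 really applies to all surjective points.

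For $n \ge 2$ I would choose a nonzero class $\beta \in \Br_C^{e} B\Heis_{2^n}$, which exists because $2^{n(n-1)}>1$. I would then compute, exactly as sketched for $\Heis_4$ in the introduction, the covariance of the events ``$p_1$ ramifies'' and ``$p_2$ ramifies'' for distinct odd primes $p_1,p_2$. Each odd-prime local Brauer transform has the form $1 + a/p + b_\beta/p^{3/2} + \dots$, and $b_\beta$ differs from $b_0$ for at least one $\beta$. Here I would use \cref{cor:odd prime local Brauer transforms}: $\beta$ ramifies along some order-$2$ element of non-minimal index, or pairs nontrivially with the tame inertia–Frobenius data at $p$. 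Expanding the mixture $\sum_\beta C_\beta \prod_p(\cdots)$ then gives a covariance whose leading term is a positive multiple of $\sum_{\beta\neq\beta'}C_\beta C_{\beta'}(b_\beta-b_{\beta'})^2/(p_1p_2)^{3/2}$, up to $O(1/p_1+1/p_2)$ relative errors. This is nonzero for all sufficiently large $p_1,p_2$, so independence fails, and in fact almost-everywhere independence fails too.

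The main obstacle is proving that the local transforms for different $\beta$ genuinely differ at a positive-density set of odd primes, and that the resulting Euler products $C_\beta$ are nonzero. The latter requires the $2$-adic and archimedean transforms of some nontrivial $\beta$ not to cancel. I would first try to show that a nontrivial bilinear form class $\beta$ gives a nonconstant character on the tamely ramified local $\Heis_{2^n}$-extensions at an odd $p$ whose inertia generator has index $\frac{3}{2}\cdot a(G)^{-1}$. I would also show that the counts of such extensions with $\beta$-value $\pm1$ are unequal, generalizing the $9$ versus $3$ computation for $\Heis_4$. Nonvanishing of the $2$-adic factor I would handle by restricting to a subgroup $\Heis_4 \le \Heis_{2^n}$ (via a subfield $\F_4\subset\F_{2^n}$ when $n$ is even, or another explicit choice otherwise) and using \cref{cor:2-adic mass in terms of admissible triples}. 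If that direct approach fails, I would fall back on positivity: the total constant is positive, so at least two distinct $\beta$-twisted products cannot all coincide once the odd-prime coefficients differ.
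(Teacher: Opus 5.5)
Your $n=1$ direction is correct and is what the paper intends: $|\Br_C^e B\Heis_2|=2^{0}=1$, so the conjectured limiting distribution is a single product of normalized local masses. For $n=2$ your covariance argument is exactly \cref{thm:derivation from Loughran Santens conjecture}(2), which relies on $C_0,C_\beta>0$. For $n\ge 3$ there is a genuine gap, at the step you flag but do not close.

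The odd-prime covariance you compute is
\[
\frac{1}{2\bigl(\sum_\beta C_\beta\bigr)^{2}}\sum_{\beta,\beta'}C_\beta C_{\beta'}(N_b-N_{b'})^2\,(p_1p_2)^{-3/2}\bigl(1+O(p_1^{-1}+p_2^{-1})\bigr).
\]
Each weight $C_\beta$ carries the factor $\widehat\tau_{\disc,2}(B\Heis_{2^n};\beta)$. This is a signed sum of terms $\pm 2^{-d}$, and the paper establishes its sign (or even its nonvanishing) only for $n=2$, in \cref{cor:numerical 2-adic mass of Heis_4}. For $n\ge 3$ the leading coefficient may therefore cancel, so ``positive multiple'' and ``nonzero'' are unjustified. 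Your side claim about almost-everywhere independence is unsupported for the same reason.

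In fact, suppose $\widehat\tau_{\disc,2}(B\Heis_{2^n};\beta)=0$ for all $\beta\ne0$. Then for every local event supported on odd primes and $\infty$, the conjectured proportion is exactly the product of the $\beta=0$ local factors. So no argument using only odd primes can succeed without $2$-adic input.

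Neither of your patches supplies that input. Admissible triples landing in a copy of $\Heis_4$ give only a partial sum of $\widehat\tau_{\disc,2}(B\Heis_{2^n};\beta)$, which says nothing about the total. Positivity of $\sum_\beta C_\beta$ also holds in the hypothetical scenario above, so it does not rule out cancellation.

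Separately, your justification of $N_b\ne N_0$ is off. All order-$2$ elements have minimal index, and $\beta\in\Br^e_C$ has trivial residue along them. The correct input is $\Br^e_{\rm un}B\Heis_{2^n}=0$ (\cref{cor:unramified_Brauer_group}) together with triviality of algebraic residues (\cref{lem:residue_computation}). Hence every $\beta\ne0$ has a non-algebraic residue at some order-$4$ class. That class contributes $0$ in \cref{lem:twisted_mass_series} precisely because the $\pm1$ counts there are \emph{equal}, not unequal. This gives $N_b<N_0=(2^n-1)^2$ for every $\beta\ne 0$.

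The missing idea is to place one local condition at $2$. Take an $\Aut(\Heis_{2^n})$-stable $2$-adic event $\Sigma_2$ (stability makes counting fields agree with counting surjections up to a constant), and let $\Sigma_p$ be ``unramified at $p$''. Then \cite[Conj.~9.10]{LoughranSantens2024} gives the limiting proportion
\[
Z^{-1}\sum_\beta \widehat\tau_{\disc,2}(\Sigma_2;\beta)\,\frac{U(N_b)}{1+3(2^n-1)p^{-1}+N_bp^{-3/2}} .
\]
Here $\widehat\tau_{\disc,2}(\Sigma_2;\beta)$ is the $2$-adic transform restricted to $\Sigma_2$. The factor $U(N_b)>0$ collects $\infty$ and all odd primes; by \cref{cor:odd prime local Brauer transforms,cor:archimedean local Brauer transforms} it depends only on $N_b$.

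The functions $t\mapsto(1+3(2^n-1)t^2+kt^3)^{-1}$ for distinct $k$ are linearly independent (their denominators are pairwise coprime). So independence at infinitely many $p$ forces
\[
A_k(\Sigma_2):=\sum_{N_b=k}\widehat\tau_{\disc,2}(\Sigma_2;\beta)=P(\Sigma_2)\,A_k(\mathrm{all})
\]
for every $k$, where $P(\Sigma_2)$ is the limiting proportion of $\Sigma_2$. The value $k=N_0$ is attained only by $\beta=0$, so this identifies $P(\Sigma_2)$ with the normalized untwisted $2$-adic mass of $\Sigma_2$.

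Now apply this to the $\Aut$-orbit of a single $\varphi_2$; note $\sum_{N_b=k}(-1)^{\beta(\varphi_2)}$ is $\Aut$-invariant because $N_b$ is. It follows that this sum is independent of $\varphi_2$. Evaluating at the trivial homomorphism, it equals $\#\{\beta: N_b=k\}$ for all $\varphi_2$. Hence every $\beta$ would pair trivially with every $\varphi_2$.

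This is false. By \cref{cor:2-adic mass in terms of admissible triples}, the admissible triple $g_{-1}=(1,xy,0)$, $g_5=(x,0,0)$, $g_2=(0,y,0)$ has $\beta$-value $b(1,xy)+b(x,y)$. This equals $1$ for suitable $x,y$ whenever $b$ is not a pullback along multiplication, i.e.\ whenever $\beta\ne0$. This proves failure of independence for all $n\ge 2$ without knowing any $2$-adic Brauer transform.
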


\subsection{Two-step nilpotent groups}

Many of our methods generalize from Heisenberg groups to two-step nilpotent groups.
\cref{thm:intro theorem for Heis_2^n results} naturally generalizes to analogous statements for finite groups $G$ that are given as a central extension
\[
0 \to W \to G \to V \to 0
\]
where $V,W$ are finite-dimensional $\F_2$-vector spaces.
Such finite groups $G$ correspond to all $2$-groups of nilpotency class $\le 2$ and exponent $\le 4$.

The study of the unramified Brauer group $\Br_{\rm un} BG$ for such groups $G$ goes back (in a different language) to work of Saltman \cite{Saltman1984Noether}.
He constructed a nilpotent group of this shape with a non-trivial unramified Brauer group over $\C$ and used this group to provide a counterexample to Noether's problem. Saltman's analysis was extended by Bogomolov \cite{Bogomolov1988Brauer}, who in particular computed the unramified Brauer group over $\C$ for all groups of the above shape.

There are two issues with extending the analysis of Bogomolov to our case.
The first minor issue is that the relevant Brauer group is not the unramified Brauer group, but rather a \textit{partially unramified} Brauer group.
The second, more substantial, issue is that we need the Brauer group over $\Q$ rather than $\C$.
To deal with these issues, we use \cite[Lem.~6.10]{LoughranSantens2024} to describe the partially unramified Brauer groups in terms of marked central extensions.
 
The most technical part of the paper is the computation of the $2$-adic local Brauer transforms.
We do this as follows.

Let $\mc{L}$ denote the set of isomorphism classes of finite $2$-groups of nilpotency class $\le 2$ and exponent $\le 4$.
We show that $\mc{L}$ is a \textit{level} in the sense of \cite{SawinWoodMomentMethod} and explicitly compute the pro-$\mc{L}$ completion of the absolute Galois group of $\Q_2$ in addition to its ramification filtration.

\begin{theorem}
    \label{thm:intro thm presentation and ramification filtration at 2}
    The group $\Gamma_{\Q_2}^{\mc{L}}$ is finite of order $256$, with presentation
    \[
     \Gamma_{\Q_2}^{\mc{L}} \simeq
    \langle \sigma_{-1}, \sigma_{5}, \sigma_{2} \mid \sigma_{-1}^2 = [\sigma_5,\sigma_2]\rangle^{\mc{L}},
    \]
    and the lower numbering ramification filtration is explicitly given in \cref{prop:ramification filtration}.
\end{theorem}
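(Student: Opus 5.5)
The plan is to compute the maximal pro-$\mc{L}$ quotient of $\Gamma_{\Q_2}$ directly from the known one-relator presentation of the maximal pro-$2$ quotient, and then read off the ramification filtration from explicit local class field theory and Kummer theory.

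\textbf{Step 1: the pro-$2$ quotient.} By Demuškin's theorem (in the form of Serre and Labute), the maximal pro-$2$ quotient $\Gamma_{\Q_2}(2)$ is generated by three elements $x,y,z$ subject to the single relation $x^2 y^4 [y,z] = 1$. This uses that $\mu_{2^\infty}(\Q_2)=\{\pm1\}$ and $[\Q_2:\Q_2^{\times 2}]=8$. Every finite group in $\mc{L}$ is a $2$-group, so $\Gamma_{\Q_2}^{\mc{L}}$ is the pro-$\mc{L}$ completion of this group. That completion is the quotient of the free pro-$2$ group on three generators by
\begin{itemize}
\item $\gamma_3$ (nilpotency class $\le 2$),
\item all fourth powers (exponent $\le 4$),
\item the relation.
\end{itemize}

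\textbf{Step 2: finiteness and the order.} First, I will check that $\mc{L}$ is closed under subgroups, quotients and finite fibre products, so that it is a level. Next, I will show that the free $\mc{L}$-group $F$ on three generators is finite. It sits in a central extension
\[
0 \to W \to F \to \F_2^3 \to 0,
\]
where $W$ is spanned by the squares $x^2, y^2, z^2$ and the commutators $[x,y],[x,z],[y,z]$. The squaring map induces a quadratic map $V\to W$ whose polarization is the commutator. This gives $W \simeq \mathrm{Sym}^2$-type data of dimension $3+3=6$, so $|F| = 2^9$.

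The relation $x^2 y^4[y,z]$ becomes $x^2[y,z]=1$, since $y^4=1$. This is a single nontrivial central relation, which cuts $W$ down to dimension $5$. Hence $|\Gamma_{\Q_2}^{\mc{L}}| = 2^8 = 256$.

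\textbf{Step 3: identifying the generators.} Renaming the generators gives the stated presentation $\sigma_{-1}^2=[\sigma_5,\sigma_2]$. To pin down the names, I will use that the abelianization $\Gamma_{\Q_2}^{\mc L,\rm ab}\simeq \Q_2^\times/\Q_2^{\times 4}$ via local reciprocity, and I will choose $\sigma_{-1},\sigma_5,\sigma_2$ to be lifts of the Artin symbols of $-1,5,2$. Here I have to check that the relation, expressed in these particular generators, takes exactly the stated form. I plan to verify this by showing that $\sigma_{-1}^2[\sigma_5,\sigma_2]^{-1}$ must lie in the one-dimensional kernel of $W_{\rm free}\to W$. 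That kernel can be detected by which central extensions of $\F_2^3$ by $\F_2$ are solvable over $\Q_2$, namely the embedding problems governed by cup products and Hilbert symbols. Since $(-1,-1)_2 = -1$, $(5,2)_2=-1$, and the remaining pairings are suitably trivial, the obstruction class is $\chi_{-1}^2 + \chi_5\chi_2$. This matches the relation.

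\textbf{Step 4: ramification filtration (main obstacle).} I expect this step to be the main obstacle. It requires computing the lower numbering filtration on a group of order $256$, i.e. of the Galois group of the compositum $M$ of all $\mc{L}$-extensions of $\Q_2$. The plan is as follows.
\begin{itemize}
\item Identify $M$ concretely: it is the maximal exponent-$4$, class-$2$ extension, and it contains $\Q_2(\zeta_{16})$ and $\Q_2(\sqrt[4]{2},i)$-type fields.
\item Compute the upper numbering filtration on the abelian quotient from the unit filtration $U^{(n)}$ via local class field theory.
\item Compute the breaks on the central subgroup $W$ using explicit quartic and biquadratic-over-biquadratic subextensions, via Herbrand's functions and conductor-discriminant computations.
\item Convert everything to lower numbering using Herbrand's $\psi$.
\end{itemize}
If a direct computation becomes unwieldy, I would fall back on computing the different and discriminant of enough intermediate fields, for instance with a computer algebra check via the database of $2$-adic fields, to determine the jumps uniquely.
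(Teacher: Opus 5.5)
Your Steps 1--3 are correct and are in substance the paper's argument. The paper takes a minimal pro-$2$ presentation with a single relator (from $\dim H^1(\Q_2,\F_2)=3$ and $\dim H^2(\Q_2,\F_2)=1$). It then reads off the relator modulo $X^4\gamma_3(X)$ from the Hilbert symbol table via \cite[Prop.~3.9.13(ii)]{NeukirchSchmidtWingberg}, which is your cup-product computation. Using Labute's explicit relator $x^2y^4[y,z]$ to get $2^3\cdot 2^5=256$ is a harmless shortcut.

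You should not, however, take $\sigma_5,\sigma_2$ to be lifts of the Artin symbols of $5$ and $2$. We have $(5,2)_2=-1$, while $(5,5)_2=(2,2)_2=(-1,5)_2=(-1,2)_2=1$. Hence the Artin symbol of $5$ is the Kummer-dual element $\bar\sigma_2$ and that of $2$ is $\bar\sigma_5$; only the Artin symbol of $-1$ equals $\bar\sigma_{-1}$. The presentation is symmetric in $\sigma_5,\sigma_2$, so it survives the swap. But \cref{prop:ramification filtration} is stated for lifts of the Kummer dual basis, and in your labelling every subgroup there would appear with $5$ and $2$ interchanged; for instance, your $\sigma_5$ lies in inertia. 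For the same reason, pairing your generators with $\chi_{-1},\chi_5,\chi_2$ in Step 3 is literally correct only for the Kummer-dual basis.

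The genuine gap is Step 4. It carries the entire content of the filtration claim, but it is only a list of tools and never determines which of the five central generators survives to which depth. Two ingredients are missing.

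First, lower numbering passes to subgroups, so the filtration on the central subgroup $\mc{G}_{1,1}(K_{1,1})_{\Gamma_{\Q_2}}$ should be computed over $K_{1,1}$. There, by class field theory, it is the annihilator of the conductor filtration on the five-dimensional space of $\Gal(K_{1,1}/\Q_2)$-invariant quadratic characters of $K_{1,1}$.

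Second, you must show that the characters dual to $\sigma_5^2,[\sigma_{-1},\sigma_5],[\sigma_5,\sigma_2],[\sigma_{-1},\sigma_2],\sigma_2^2$ have conductor exponents $0,2,4,6,8$. The paper does this for four of them. For each, it exhibits a $C_4$- or $D_4$-extension $E'_\chi/\Q_2$ containing the multiquadratic field $F_\chi$ cut out by the relevant $\bar\sigma$'s. Then $\Gal(E'_\chi/F_\chi)$ is visibly generated by the named square or commutator, and the conductor is read off from the discriminant.

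Your quartic/biquadratic device cannot reach $[\sigma_5,\sigma_2]$. Because $(5,2)_2=-1$, every central extension of $\Gal(\Q_2(\sqrt5,\sqrt2)/\Q_2)$ by $\F_2$ with nontrivial commutator is obstructed. The paper instead argues globally: the $31$ Galois degree-$16$ fields containing $K_{1,1}$ have relative discriminant exponents exactly $\{0,2,4,6,8\}$. So the conductor flag is complete, and the remaining exponent is forced to be $4$.

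Only after this do the last two steps go through. Herbrand's function converts the upper breaks $-1,1,3,5,7$ on the central subgroup into lower breaks $-1,1,5,13,29$. Herbrand's theorem for the quotient $\mc{G}_{1,1}(\Q_2)$ (lower breaks $-1,1,3$) then produces the subgroups listed in \cref{prop:ramification filtration}.
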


\subsection{Outline of the paper}

In \cref{sec:derivation of LS conjecture}, we prove that \cref{conj:explicit Heis_4 conjecture} follows from \cite[Conj.\ 9.3]{LoughranSantens2024}.
We also prove the triviality of algebraic partially unramified Brauer groups in many cases relevant to Malle's conjecture (\cref{lem:vanishing of algebraic Brauer groups in many cases}).

In \cref{sec:two step nilpotent groups}, we describe the partially unramified Brauer group of a non-concentrated regular permutation group $G$ of even order in terms of central extensions (\cref{lem:Brauer_group_discriminant}).
When $G$ is a two-step nilpotent $2$-group, we classify the partially unramified Brauer elements in terms of quadratic forms (\cref{subsec:two-step nilpotent groups}).

In \cref{sec:the case of Heisenberg groups}, we specialize our regular permutation group $G$ to be $\Heis_{2^n}$, and prove most of \cref{thm:intro theorem for Heis_2^n results} with the exception of the $\infty$- and $2$-adic Brauer transforms of $B\Heis_{2^n}$.

In \cref{sec:first section on 2-adic mass}, we explicitly compute the $\infty$- and $2$-adic Brauer transforms of $B\Heis_4$ by assuming the local ramification filtration for $\Gamma_{\Q_2}^{\mc{L}}$ (\cref{thm:intro thm presentation and ramification filtration at 2}).

In \cref{sec:second section on 2-adic mass}, we prove \cref{thm:intro thm presentation and ramification filtration at 2}.

\subsection{Notation}

For every perfect field $k$ appearing in this paper, we let $\Gamma_k := \Gal(\kbar/k)$ where $\kbar/k$ denotes a fixed algebraic closure.
For every place $v$ of $\Q$, we fix an inclusion $\Gamma_{\Q_v} \subset \Gamma_{\Q}$.

For every finite separable extension of fields $L/K$, we let $\Gal(L/K)$ denote the permutation group whose underlying abstract group is $\Gal(\widetilde{L}/K)$, where $\widetilde{L}/K$ is the normal closure of $L/K$, and the permutation representation is given by the natural action of $\Gal(\widetilde{L}/K)$ on the set $\Hom_{K}(L, \overline{K})$.

\textbf{Brauer transforms:} Let $G \le S_n$ be a transitive permutation group and $v$ a place of $\Q$.
We define the \textit{$v$-adic mass} of $G$ to be the real number
\[
\tau_{\disc,v}(BG) := \frac{1}{|G|} \sum_{\varphi_v : \Gamma_{\Q_v} \to G} \frac{1}{|\disc(\varphi_v)|^{a(G)}},
\]
where $|\disc(\varphi_v)| \in \Z_{\ge 1}$ is the absolute norm of the discriminant of the $G$-structured \'etale algebra over $\Q_v$ corresponding to $\varphi_v$ in the case $v$ is non-archimedean.
In the case $v$ is archimedean, we set $|\disc(\varphi_{\infty})| := 1$.

We now let $\beta \in \Br^e BG$ be a Brauer class \cite[Lem.\ 6.2]{LoughranSantensSurvey}.
We define the \textit{$v$-adic Brauer transform} of $G$ with respect to $\beta$ to be the complex number
\[
\widehat{\tau}_{\disc,v}(BG;\beta) := \frac{1}{|G|} \sum_{\varphi_v : \Gamma_{\Q_v} \to G} \frac{e^{2 i \pi {\rm inv}_v\, \beta(\varphi_v)}}{|\disc(\varphi_v)|^{a(G)}},
\]
where $\beta(\varphi_v) \in \Br \, \Q_v$ is the \textit{evaluation map} (cf.\ \cite[Def.\ 6.6]{LoughranSantensSurvey}) and ${\rm inv}_v: \Br \, \Q_v \to \Q/\Z$ is the local invariant map from local class field theory.

We observe that $\widehat{\tau}_{\disc,v}(BG;0) = \tau_{\disc,v}(BG)$ is the $v$-adic mass of $G$.

\subsection*{Acknowledgements}

The collaboration that led to this paper was started during the thematic program on universal statistics at the Centre de Recherches Mathématiques (CRM) at the University of Montréal.
The authors would like to thank the CRM for its hospitality and financial support.

We would like to thank Daniel Loughran and Melanie Matchett Wood for useful comments.

The first author was partially supported by NSF DMS-2140043, the James Mills Peirce Fellowship at Harvard University, and a Churchill Scholarship.
The second author was supported by the Herschel Smith Fund.

The authors acknowledge the use of AI tools, including GPT-5.5 and Codex by OpenAI, for autonomously writing initial Magma code that was used to search for small permutation groups admitting a Brauer--Manin obstruction, and suggesting the statements of \cref{prop:ramification filtration} and \cref{cor:numerical 2-adic mass of Heis_4}.
The authors are responsible for all statements, proofs, and exposition in this paper.

\section{Derivation of \cref{conj:explicit Heis_4 conjecture}}
\label{sec:derivation of LS conjecture}

In this section, we give a definition of (almost-everywhere) independence of local events and derive \cref{conj:explicit Heis_4 conjecture} using the framework of \cite{LoughranSantens2024}.
We discuss why the transcendental Brauer--Manin obstruction appearing in \cref{conj:explicit Heis_4 conjecture} is the simplest possible kind of obstruction by proving that there are no algebraic Brauer--Manin obstructions when ordering by discriminant.
We also give a brief description of how the methods of our paper naturally give an algorithm for computing the leading constant in Malle's conjecture for various $2$-groups ordered by Galois discriminant.

\begin{definition}
    \label{def:local events and independence and almost-everywhere independence}
    Let $n\in \Z_{\ge 1}$ and $G \le S_n$ be a transitive permutation group.
    
    $\bullet$ If $v$ is a place of $\Q$, a \textit{$v$-local event} for $G$ is a subset $\Sigma_v \subset \Hom(\Gamma_{\Q_v},G)$ that is $G$-conjugation invariant.
    A \textit{local event} is a tuple $\Sigma = (\Sigma_v)_{v\in S}$ where $S$ is a finite set of places of $\Q$ and $\Sigma_v$ is a $v$-local event.
    Given a local event $\Sigma$ and $X \in \R_{\ge 1}$, we define
    \[
    N(G,X;\Sigma) := \#\left\{\Q \subset K \subset \Qbar :
    \begin{matrix}
        \exists\, \varphi : \Gamma_{\Q} \twoheadrightarrow G, \ \ker(\varphi) = \Gal(\Qbar/K) \\
        \forall v \in S, \ \varphi|_{\Gamma_{\Q_v}} \in \Sigma_v \\
        |\disc(K/\Q)| \le X
    \end{matrix}
    \right\}.
    \]
    We let $N(G,X) := N(G,X;\es)$.

    $\bullet$ We say that $G$ satisfies \textit{independence of local events} if $N(G,X) > 0$ for $X$ sufficiently large, and for every local event $\Sigma = (\Sigma_{v})_{v\in S}$ one has that the following limits exist and are equal:
    \[
    \lim_{X \to \infty}
    \frac{N(G,X;\Sigma)}{N(G,X)} = \lim_{X\to \infty}
    \prod_{v \in S} \frac{N(G,X;\Sigma_{v})}{N(G,X)}.
    \]
    
    $\bullet$ We say that $G$ satisfies \textit{almost-everywhere independence of local events} if $N(G,X) > 0$ for $X$ sufficiently large, and there exists a finite set $T$ of places of $\Q$ such that for every local event $\Sigma = (\Sigma_{v})_{v\in S}$ where $S \cap T = \es$ one has that the following limits exist and are equal:
    \[
    \lim_{X \to \infty}
    \frac{N(G,X;\Sigma)}{N(G,X)} = \lim_{X\to \infty}
    \prod_{v \in S} \frac{N(G,X;\Sigma_{v})}{N(G,X)}.
    \]
\end{definition}

\begin{remark} \hfill
	\begin{enumerate}
		\item Analogous definitions can be made for all big height functions \cite[\S4.4]{LoughranSantensSurvey}.
		\item Wood defines a notion of fair counting functions for abelian groups $G$ and shows that almost-everywhere independence of local events holds in this case; see \cite[Cor.~2.2]{WoodAbelianLocalProbabilities}.
	\end{enumerate}
\end{remark}

\begin{theorem}
    \label{thm:derivation from Loughran Santens conjecture}
    The following statements hold.
    \begin{enumerate}
        \item Assume \cite[Conj.\ 9.3]{LoughranSantens2024} for the base field $\Q$ and permutation group $\Heis_4 \le S_{64}$.
        Then the explicit asymptotic counting statement in \cref{conj:explicit Heis_4 conjecture} holds.
        \item Assume \cite[Conj.\ 9.10]{LoughranSantens2024} holds for the base field $\Q$, constant group scheme $\Heis_{4}$, and balanced height given by the Galois discriminant. Then the permutation group $\Heis_4 \le S_{64}$ fails to satisfy almost-everywhere independence of local events.
    \end{enumerate}
\end{theorem}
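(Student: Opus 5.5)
We would prove both parts by specializing the general framework of \cite{LoughranSantens2024} to $G=\Heis_4\le S_{64}$ with the Galois discriminant height, and feeding in the local and global computations established later in the paper.

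For part (1), we first record the invariants of the height. The minimal index elements of the regular representation are the involutions, each of index $32$, so $a(G)=1/32$. Let $C$ be the set of order-$2$ elements. For $\Heis_4$ these form the $9$ nontrivial elements of the center together with involutions outside the center. We must check that $C$ generates $\Heis_4$, so that the height is balanced. We then count the $\Gamma_\Q$-orbits (here, conjugacy classes, since $\mu_2\subset\Q$) of minimal-weight conjugacy classes to get $b(G)=9$, matching $(\log X)^8$.

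Conjecture 9.3 of \cite{LoughranSantens2024} then expresses the leading constant as a sum over the partially unramified Brauer group $\Br_C^e B\Heis_4/\Br_0$ of Euler products of Brauer transforms. The structure of the proof is as follows.
\begin{itemize}
\item \cref{prop:thin sets do not contribute} lets us ignore thin sets, i.e.\ non-surjective morphisms.
\item \cref{lem:vanishing of algebraic Brauer groups in many cases} kills any algebraic part.
\item \cref{thm:intro theorem for Heis_2^n results}(2) with $n=2$ shows $|\Br_C^e B\Heis_4|=2^{2}=4$.
\end{itemize}
The four classes should consist of $0$ and three transcendental classes $\beta,\beta',\beta''$. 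These are permuted by the $\Aut(\F_4)$ and automorphism symmetries of $\Heis_4$, which preserve the discriminant, so they give equal Euler products. Alternatively, only one nonzero class contributes after quotienting; we will check which against the normalization $\tfrac12(C_0+C_\beta)$, with the $\tfrac12$ and factor $3^3$ versus $3^4$ in $\alpha_\beta$ accounting for this.

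Next we evaluate the Euler factors. At odd $p$, \cref{cor:odd prime local Brauer transforms} gives $\tau_{\disc,p}$ and $\widehat\tau_{\disc,p}(\cdot;\beta)$. The $9/p$ term comes from tame inertia generated by a central involution, which $\beta$ does not see. The $p^{-3/2}$ terms come from inertia generated by a non-central involution, which has $p^{-48/32}$ discriminant. There $\beta$ acts by a sign depending on Frobenius, reducing $9$ to $3$. We multiply by the convergence factors $(1-1/p)^9$ and the $L$-function residue normalization. At $\infty$ we use \cref{cor:archimedean local Brauer transforms}. At $2$ we use \cref{cor:2-adic mass in terms of admissible triples} together with the presentation and ramification filtration of \cref{thm:intro thm presentation and ramification filtration at 2}; these produce the algebraic numbers in $\alpha_0,\alpha_\beta$, with the powers of $\sqrt[4]{2}$ arising from $2^{-d/32}$. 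We then collect the global factors $\alpha(G)$, $|\Br/\Br_0|$, the volume normalization and $1/|G|$ into the stated rationals $2^{-71}3^{-4}5^{-1}$ and $2^{-71}3^{-3}5^{-1}$.

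The main obstacle is the $2$-adic Brauer transform. It requires enumerating all $\varphi:\Gamma_{\Q_2}\to\Heis_4$ through the finite quotient $\Gamma_{\Q_2}^{\mc L}$ of order $256$, computing discriminant exponents from the ramification filtration, and evaluating $\beta(\varphi)$. For that evaluation we would use the marked central extension description of $\beta$ (\cite[Lem.~6.10]{LoughranSantens2024}): $\mathrm{inv}_2\beta(\varphi)$ is the obstruction to lifting $\varphi$ to the associated central extension by $\mu_2$. This is computable via the presentation, since $\sigma_{-1}^2=[\sigma_5,\sigma_2]$ gives the relation to check.

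For part (2), we take the limit formula of Conj.~9.10 for two odd primes $p_1\ne p_2$ outside any finite $T$. Let $\Sigma_{p_i}$ be the event that inertia at $p_i$ is generated by a non-central involution with Frobenius pairing nontrivially against $\beta$. The conjectured joint density is
\[
\frac{C_0\,\omega_0+C_\beta\,\omega_\beta}{C_0+C_\beta},
\]
where $\omega_0$ and $\omega_\beta$ are the products of the local relative weights. The product of the marginals has a different form, and their difference is the stated covariance. This covariance is nonzero because $C_0,C_\beta>0$ and the local factors differ, since $9\ne 3$. Because infinitely many such pairs avoid $T$, almost-everywhere independence fails.
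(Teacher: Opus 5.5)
\textbf{Part (2) has a genuine gap: your local event is empty.} The error comes from misreading the odd-prime factor. In the regular representation every involution has index $32$, as your first paragraph says. So tame inertia generated by any involution, central or not, contributes $p^{-1}$. The term $9/p$ counts all nine involution classes: $3$ central singletons and $6$ non-central classes of size $4$. (The centre has only $3$ nontrivial elements, not $9$.)

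The $p^{-3/2}=p^{-48/32}$ terms come from inertia generated by the nine order-$4$ classes $\{(x,y,z):z\in\F_4\}$ with $xy\neq0$. $N_b$ counts those classes at which $\beta$ has algebraic, hence trivial, residue (\cref{lem:residue_computation}). The drop from $9$ to $3$ is cancellation over Frobenius at the other six order-$4$ classes.

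Consequently $\Sigma_{p_i}$ contains nothing. Since $\beta\in\Br^e_C B\Heis_4$, \cref{lem:Brauer_group_discriminant} says every lift $\widetilde g$ of an involution $g$ is an involution and is not conjugate to $\widetilde g w$. For tame $\varphi_p$ with inertia $\langle g\rangle$, the Frobenius $f$ centralizes $g$. Hence any lift $\widetilde f$ satisfies $\widetilde f\widetilde g\widetilde f^{-1}=\widetilde g=\widetilde g^{p}$. The embedding problem into $G_\beta$ is therefore solvable and $\beta(\varphi_p)=0$. Both the joint and the marginal densities vanish, and no covariance appears.

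Moving to order-$4$ inertia does not fully rescue the event either. An event defined by the value of one particular $\beta$ is not symmetric in the three nonzero classes, so your two-weight formula $\frac{C_0\omega_0+C_\beta\omega_\beta}{C_0+C_\beta}$ would not apply as stated.

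What works, and what the paper uses, is the $\beta$-independent event ``$p_i$ ramifies''. Set $\varepsilon_{i,0}=\frac{9p_i^{-1}+9p_i^{-3/2}}{1+9p_i^{-1}+9p_i^{-3/2}}$, and let $\varepsilon_{i,\beta}$ be the same expression with $9p_i^{-3/2}$ replaced by $3p_i^{-3/2}$. Then the joint density minus the product of the marginals is $\frac{C_0C_\beta}{(C_0+C_\beta)^2}(\varepsilon_{1,0}-\varepsilon_{1,\beta})(\varepsilon_{2,0}-\varepsilon_{2,\beta})$. Here $\varepsilon_{i,0}-\varepsilon_{i,\beta}=6p_i^{-3/2}(1+O(p_i^{-1}))\neq0$.

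\textbf{Part (1) has the same shape as the paper's argument, but two points need fixing.} The shape is to substitute the local factors into Conj.~9.3.

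First, do not hedge on the Brauer classes. All four classes enter, the average is $\frac14\bigl(\widehat\tau(0)+3\widehat\tau(\beta)\bigr)$, and ``only one nonzero class contributes'' is false. The multiplicity $3$ is exactly what turns $3^4$ into $3^3$ in $\alpha_\beta$; the overall $\frac12$ is only normalization.

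Your symmetry argument for the equality of the three nonzero transforms is a legitimate alternative route. The paper instead reads the equality off from $N_b=3$ for all $b\neq0$ (\cref{lem:N_beta computation for Heis_4}) and from the explicit $2$-adic computation. Your route must be checked, though:
\begin{enumerate}
\item the nonzero classes are represented by $b(x,y)=\mathrm{Tr}(\gamma xy^2)$ with $\gamma\in\F_4^\times$;
\item the automorphism $(x,y,z)\mapsto(\lambda x,\mu y,\lambda\mu z)$ acts by $\gamma\mapsto\lambda\mu^2\gamma$, which is transitive on $\F_4^\times$;
\item automorphisms preserve discriminants in the regular representation and satisfy $\beta(\alpha\circ\varphi)=(\alpha^*\beta)(\varphi)$.
\end{enumerate}

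Second, your list of global factors omits the groupoid conversion factor $|\Heis_4|/|\Aut(\Heis_4)|=2^{-4}3^{-2}$. One lands on $2^{-71}3^{-4}5^{-1}$ only by combining it with $a^{b-1}=2^{-40}$, $(b-1)!=8!$, $|\Hom(\Heis_4,\Q^\times)|=16$, $|\Br^e_C B\Heis_4|=4$, the factor $\frac{7}{16}$ at $\infty$, and $2^{-9}$ at $2$.
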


\begin{proof}[Proof of \textnormal{(1)}]
    By \cref{prop:thin sets do not contribute}, thin sets do not contribute to the conjectural asymptotic number of Galois $\Heis_4$-number fields ordered by discriminant.

    We record Malle's $a$- and $b$-constants.
    Since $\Heis_4 \le S_{64}$ is in its regular representation and $2$ is the minimal prime dividing its order, it is clear that $a(\Heis_4)^{-1} = |\Heis_4|(1-\frac{1}{2}) = 32$.
    The minimal index conjugacy classes correspond to the order $2$ conjugacy classes, and by \cref{lem:conjugacy_classes} there are $9$ of them.
    Since order $2$ elements are fixed under invertible powering, this implies that $b(\Heis_4) = 9$.
    We let $a = 1/32$ and $b = 9$.

    \cite[Conj.\ 9.3]{LoughranSantens2024} asks us to compute a groupoid cardinality conversion factor of
    \[
    \frac{|Z_{S_{64}}(\Heis_{4})||\Heis_4|}{|N_{S_{64}}(\Heis_4)|}.
    \]
    Since $\Heis_4 \le S_{64}$ is a regular representation, this ratio simplifies to $|\Heis_4|\cdot|\Aut(\Heis_4)|^{-1}$.
    Since $\Heis_4$ corresponds to \cite[\href{https://www.lmfdb.org/Groups/Abstract/64.242}{\texttt{64.242}}]{LMFDB}, we use \cite{LMFDB} to observe that $|\Aut(\Heis_4)| = 2^{10} \cdot 3^{2}$.

    Let $C \subset \Heis_4$ denote the subset of order $2$ elements.
    \cite[Conj.\ 9.3]{LoughranSantens2024} asks us to compute the constant
    \[
    \frac{a^{b-1} \cdot |\Br_C^{e} B\Heis_4|}{|\Hom(\Heis_4,\Q^{\times})| \cdot (b-1)!}.
    \]
    We use \cref{thm:intro theorem for Heis_2^n results}(2) to deduce that $|\Br_C^{e} B\Heis_4| = 2^{2}$.
    We also have that $|\Hom(\Heis_4,\Q^{\times})| = |\Heis_4^{\rm ab}| = 2^{4}$.

    Lastly, \cite[Conj.\ 9.3]{LoughranSantens2024} asks us to compute for all $\beta \in \Br_C^{e} B\Heis_4$ the global Brauer transform \cite[Lem.\ 7.6]{LoughranSantensSurvey}
    \[
    \prod_{p > 2} \left(1 - \frac{1}{p}\right)^{b}
    \widehat{\tau}_{\disc,p}(B\Heis_{4};\beta) \cdot 
    \widehat{\tau}_{\disc,\infty}(B\Heis_{4};\beta) \cdot 
    \left(1 - \frac{1}{2}\right)^{b}
    \widehat{\tau}_{\disc,2}(B\Heis_{4};\beta).
    \]
    \cref{thm:intro theorem for Heis_2^n results}(3) implies that the above constant is equal to
    \[
    \widehat{\tau}(\beta) := 
    \begin{cases}
    \prod_{p > 2} \left(1 - \frac{1}{p}\right)^{9}\left(1 + \frac{9}{p} + \frac{9}{p^{3/2}}\right) \cdot 
    \frac{7}{16} \cdot 
    \left(1 - \frac{1}{2}\right)^9
    \frac{313+54\sqrt{2}+54\sqrt[4]{2}+9\sqrt{2}\sqrt[4]{2}}{16} & \beta \textnormal{ is trivial},
    \\ 
    \prod_{p > 2} \left(1 - \frac{1}{p}\right)^{9}\left(1 + \frac{9}{p} + \frac{3}{p^{3/2}}\right) \cdot 
    \frac{7}{16} \cdot 
    \left(1 - \frac{1}{2}\right)^9
    \frac{247 + 30\sqrt[4]{2} + 36\sqrt{2} - 3\sqrt{2}\sqrt[4]{2}}{16} & \textnormal{otherwise}.
    \end{cases}
    \]

    Piecing together \cite[Conj.\ 9.3]{LoughranSantens2024}, we have that the leading constant is given by
    \begin{align*}
    c(\Heis_4) &= 
    \frac{|Z_{S_{64}}(\Heis_{4})||\Heis_4|}{|N_{S_{64}}(\Heis_4)|} \cdot 
    \frac{a^{b-1} \cdot |\Br_C^{e} B\Heis_4|}{|\Hom(\Heis_4,\Q^{\times})| \cdot (b-1)!} 
    \cdot \frac{1}{|\Br_C^{e} B\Heis_4|} \sum_{\beta \in \Br_C^{e} B\Heis_4} \widehat{\tau}(\beta)
    \\ &= \frac{1}{2^{4} \cdot 3^{2}} \cdot \frac{1}{2^{49} \cdot 3^{2} \cdot 5 \cdot 7} 
    \cdot \frac{\widehat{\tau}(0) + 3\widehat{\tau}(\beta_{\neq0})}{4}
    \\ &=
    \frac{1}{2}\Bigg(
        \frac{313+54\sqrt{2}+54\sqrt[4]{2}+9\sqrt{2}\sqrt[4]{2}}{2^{71} \cdot 3^4 \cdot 5} \prod_{p > 2} \left(1 - \frac{1}{p}\right)^{9}\left(1 + \frac{9}{p} + \frac{9}{p^{3/2}}\right) 
        \\ & \qquad\qquad 
        + \frac{247 + 30\sqrt[4]{2} + 36\sqrt{2} - 3\sqrt{2}\sqrt[4]{2}}{2^{71} \cdot 3^3 \cdot 5} \prod_{p > 2} \left(1 - \frac{1}{p}\right)^{9}\left(1 + \frac{9}{p} + \frac{3}{p^{3/2}}\right)
    \Bigg).
    \end{align*}
    This finishes the proof.
\end{proof}

\begin{proof}[Proof of \textnormal{(2)}]
    This follows from the interpretation of the leading constant in \cref{conj:explicit Heis_4 conjecture} sketched in the introduction; we now justify it.
    Let $p_1 \ne p_2$ be odd primes, $\Sigma_{12}$ the local event that both primes ramify, and for each $i = 1,2$ let $\Sigma_i$ be the local event that $p_i$ ramifies, $\varepsilon_{i,0} := (9p_i^{-1}+9p_i^{-3/2})/(1+9p_i^{-1}+9p_i^{-3/2})$, and $\varepsilon_{i,\beta} := (9p_i^{-1}+3p_i^{-3/2})/(1+9p_i^{-1}+3p_i^{-3/2})$.
    Then \cref{thm:intro theorem for Heis_2^n results}(3) and \cite[Conj.\ 9.10]{LoughranSantens2024} imply that
    \[
    \lim_{X \to \infty} \frac{N(\Heis_4,X;\Sigma_{12})}{N(\Heis_4,X)} = 
    \frac{C_0 \varepsilon_{1,0} \varepsilon_{2,0} +
    C_{\beta} \varepsilon_{1,\beta} \varepsilon_{2,\beta}}
    {C_0 + C_{\beta}},
    \]
    as well as
    \[
    \left(\lim_{X \to \infty} \frac{N(\Heis_4,X;\Sigma_{1})}{N(\Heis_4,X)}\right)
    \left(\lim_{X \to \infty} \frac{N(\Heis_4,X;\Sigma_{2})}{N(\Heis_4,X)}\right) =
    \frac{C_0 \varepsilon_{1,0} + C_{\beta} \varepsilon_{1,\beta}}{C_0 + C_{\beta}}
    \frac{C_0 \varepsilon_{2,0} + C_{\beta} \varepsilon_{2,\beta}}{C_0 + C_{\beta}}.
    \]
    One can directly show that the difference between the two quantities is
    \[
    \frac{36 C_0 C_{\beta}}{(C_0 + C_{\beta})^2} \frac{1}{p_1^{3/2} p_2^{3/2}}
    \left(1 + O\left(\frac{1}{p_1} + \frac{1}{p_2}\right)\right).
    \]
    This is nonzero for all $p_1 \ne p_2$ sufficiently large, demonstrating that $\Heis_4 \le S_{64}$ fails to satisfy almost-everywhere independence of local events.
\end{proof}

\subsection{Algebraic Brauer groups of non-concentrated permutation groups}

We note that the non-concentrated permutation group $\Heis_4 \le S_{64}$ appearing in \cref{conj:explicit Heis_4 conjecture} has a \textit{transcendental} partially unramified Brauer--Manin obstruction, rather than an algebraic one.
This is not a coincidence, as we now show that all non-concentrated permutation groups have trivial partially unramified algebraic Brauer groups \cite[Def.\ 6.10, \S6.6]{LoughranSantensSurvey}.

Recall \cite[Def.~6.10]{LoughranSantensSurvey} that the Brauer group depends on a conjugacy invariant and Galois invariant subset $C \subseteq G(-1)$, where $G(-1)$ is the $(-1)$-Tate twist (see \cref{def:Tate_twist}).

\begin{lemma}
    \label{lem:vanishing of algebraic Brauer groups in many cases}
	Let $k$ be a field and $G$ a finite group of order coprime to the characteristic of $k$.
    Let $C \subset G(-1)_k$ be a conjugacy invariant and Galois invariant subset that generates $G$ and whose nontrivial elements all have prime order.
    Then $\Br^e_{1, C} (BG)_k = 0$.
\end{lemma}

\begin{proof}
    In this proof, all $(-1)$-Tate twists occur over the base field $k$.
	Let $\pi: G \twoheadrightarrow G^{\mathrm{ab}}$ be the abelianization map and $C^{\mathrm{ab}} := \pi(C) \subset G^{\mathrm{ab}}(-1)$.
    We have that $C^{\rm ab}$ generates $G^{\mathrm{ab}}$, since $C$ generates $G$ (both in the sense of \cite[Def.\ 3.4]{LoughranSantens2024}).

    We claim that the subset of elements $C^{\rm ab} \subset G^{\rm ab}(-1)$ is \textit{fair} \cite{WoodAbelianLocalProbabilities}, meaning for every $r \in \N$ one has that $C^{\rm ab} \cap G^{\rm ab}[r](-1)$ generates $G^{\rm ab}[r]$.
    Indeed, since $G^{\rm ab}$ is generated by elements of prime order, we deduce that $G^{\rm ab} = \bigoplus_p G^{\rm ab}[p]$, hence for all $r\in \N$ we have that
    \[
    \langle C^{\rm ab} \cap G^{\rm ab}[r](-1)\rangle = \bigoplus_{p \mid r} \langle C^{\rm ab} \cap G^{\rm ab}[p](-1) \rangle = \bigoplus_{p \mid r} G^{\rm ab}[p] = G^{\rm ab}[r].
    \]

    Since the subset $C^{\rm ab} \subset G^{\rm ab}(-1)$ is fair, we deduce that $\Br^e_{1, C^{\mathrm{ab}}} (BG^{\mathrm{ab}})_k = 0$ using \cite[Lem.~10.26]{LoughranSantens2024}, \cite[Lem.~10.21]{LoughranSantens2024}, and the observation that for every prime $p$ one has that $\Sha^1_{\omega}(k, \mu_p) \subset \break \operatorname{H}^1(\Gal(k(\mu_p)/k), \mu_p)= 0$.
    The map $\Br^e_{1, C^{\mathrm{ab}}} (BG^{\mathrm{ab}})_k \to \Br^e_{1, C} (BG)_k$ is an isomorphism by \cite[Lem.~6.31]{LoughranSantens2024},\footnote{\cite[Lem.~6.31]{LoughranSantens2024} technically provides an isomorphism $\Br^e_{1,C^{\rm ab}}(BG^{\rm ab})_k \to \Br^e_{1, f^{-1}(C^{\mathrm{ab}})} (BG)_k$, but the argument given there also works for the map $\Br^e_{1, C^{\mathrm{ab}}} (BG^{\mathrm{ab}})_k \to \Br^e_{1, C} (BG)_k$.} proving the claim.
\end{proof}

\begin{corollary}
	Let $G \le S_n$ be a transitive permutation group and $C \subset G$ the subset of minimal index elements. Assume that $C$ generates $G$. Then $\Br^e_{1, C} BG = 0$.
\end{corollary}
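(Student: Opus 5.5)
The plan is to deduce the corollary directly from \cref{lem:vanishing of algebraic Brauer groups in many cases}, applied with $k = \Q$ (or any field of characteristic zero) and with $C \subset G$ the set of minimal index elements. Since $k$ has characteristic zero, the coprimality hypothesis holds automatically. The main point is that the geometric data of the lemma reduce to group-theoretic data. We regard $C$ as a subset of $G(-1)_k$ in the standard way: an element $g \in G$ of order $m$ corresponds to the class of $g$ paired with a primitive $m$-th root of unity. Under this identification, $C$ is Galois invariant precisely when it is stable under the powering maps $g \mapsto g^{a}$ for $a$ coprime to the order of $g$.

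Next we check the hypotheses of the lemma in order.
\begin{enumerate}
\item \emph{Conjugacy invariance.} The index $\ind(g) = n - \#\{\text{cycles of } g\}$ depends only on the cycle type of $g$, so it is invariant under conjugation in $S_n$, and in particular under conjugation in $G$. Hence $C$ is conjugacy invariant.
\item \emph{Galois invariance.} If $a$ is coprime to the order of $g$, then $g^{a}$ has the same cycle type as $g$: each $\ell$-cycle is raised to a power coprime to $\ell$, so it remains an $\ell$-cycle. Thus $\ind(g^{a}) = \ind(g)$, and $C$ is stable under the cyclotomic action.
\item \emph{Generation.} That $C$ generates $G$ is exactly the assumption of the corollary. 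The notion of generation in \cite[Def.\ 3.4]{LoughranSantens2024} for subsets of $G(-1)$ agrees with ordinary group generation once we pass to underlying group elements.
\item \emph{Prime order.} Let $g \in C$ be nontrivial, and suppose its order $m$ is not prime. Choose a prime $p \mid m$ and set $h = g^{m/p}$, which is nontrivial of order $p$. Every $\ell$-cycle of $g$ breaks into $\gcd(\ell, m/p)$ cycles of $h$, each of length $\ell/\gcd(\ell,m/p)$. So $h$ has at least as many cycles as $g$, giving $\ind(h) \le \ind(g)$, and minimality forces equality. Equality requires $\gcd(\ell, m/p) = 1$ for every cycle length $\ell$ of $g$. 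But $m$ is the lcm of the cycle lengths, and $m/p > 1$ because $m$ is not prime. Hence some prime $q$ dividing $m/p$ also divides $m$, and therefore divides some cycle length $\ell$. That cycle then satisfies $\gcd(\ell, m/p) > 1$ and splits into more than one cycle of $h$, so $\ind(h) < \ind(g)$. This contradicts minimality, so every nontrivial element of $C$ has prime order.
\end{enumerate}

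The only slightly delicate step is the prime-order claim in (4), and even that is an elementary cycle-counting argument. With all four hypotheses verified, \cref{lem:vanishing of algebraic Brauer groups in many cases} gives $\Br^e_{1,C} BG = 0$.
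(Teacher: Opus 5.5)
Your proof is correct and matches the paper's: both apply \cref{lem:vanishing of algebraic Brauer groups in many cases} after showing by cycle counting that nontrivial minimal index elements have prime order. The differences are cosmetic. The paper passes to $g^p$ rather than your $g^{m/p}$, and it leaves the conjugacy and Galois invariance of $C$ implicit, whereas you verify them explicitly.
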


\begin{proof}
	The elements of minimal index have prime order, since if $g \in G \backslash \{1\}$ has composite order $n$, and $p \mid n$ is prime, then the index of $g^p \ne 1$ is strictly smaller than that of $g$.
\end{proof}

\begin{remark}
    The assumption that $C$ generates $G$ is crucial.
    For example, if $G = \Z/8\Z$ then $\langle C \rangle = 4\Z/8\Z \ne G$ and Wang's counterexample \cite{GrunwaldWang1} to a result of Grunwald \cite{GrunwaldWang2} gives a nonzero Brauer class $\beta_{16} \in \Br_{1,{\rm un}}^{e} B \Z/8\Z \subset \Br_{1,C}^{e} B\Z/8\Z$; see \cite[Ex.\ 6.22]{LoughranSantensSurvey}.
\end{remark}

\subsection{An algorithm for other $2$-groups}

The statement and proof of \cref{thm:derivation from Loughran Santens conjecture}, which explicitly compute the predicted leading constant in Malle's conjecture for the regular permutation group $\Heis_4$ and describe the statistics of local events, naturally generalize to finite $2$-groups $G$ of nilpotency class $\le 2$ and exponent $\le 4$ in their regular representation.
Indeed, the statement and proof of \cref{thm:derivation from Loughran Santens conjecture} for a general such $G$ require the computation of the following objects:
\begin{enumerate}
    \item the partially unramified Brauer group $\Br_{C}^{e} BG$;
    \item the residues of all $\beta \in \Br_C^e BG$;
    \item the enumeration of $\Hom(\Gamma_{\Q_2},G)$ with Galois discriminants;
    \item the Brauer--Manin pairing $\Hom(\Gamma_{\Q_2},G) \times \Br_C^e BG \to \frac{1}{2}\Z/\Z$.
\end{enumerate}
One also needs to compute the $\infty$-adic Brauer transforms of $G$; however, these are readily computable by \cref{lem:archimedean local Brauer transforms for general groups}.

Item (1) is computable in terms of quadratic forms using \cref{lem:partially_ramified_Brauer_group}.
Item (2) is computable in terms of quadratic forms by adapting the statement and proof of \cref{lem:residue_computation}.
Item (3) is computable in terms of $G[2]$ and \textit{admissible triples} of $G$; see \cref{def:admissible triple,thm:bijection between 2-adic homomorphisms and admissible triples}.
Item (4) is computable in terms of quadratic forms and admissible triples by adapting the statement and proof of \cref{cor:2-adic mass in terms of admissible triples}.
This verifies that all steps in writing down the leading constant in Malle's conjecture for $G$ are explicitly computable in terms of quadratic forms and admissible triples.

\section{Two-step nilpotent groups}
\label{sec:two step nilpotent groups}

\subsection{Brauer groups}
Let us recall the following definitions from \cite{LoughranSantensSurvey}.
For the sake of concreteness, we always take our base number field to be $\Q$.

A \textit{$\Gamma_{\Q}$-set} or simply \textit{Galois set} is a set $X$ equipped with a (left) $\Gamma_{\Q}$-action.
A subset $Y\subset X$ is \textit{$\Gamma_{\Q}$-invariant} or \textit{Galois invariant} if $Y$ is preserved by the $\Gamma_{\Q}$-action.

A \textit{$\Gamma_{\Q}$-group} is a group $G$ equipped with a (left) $\Gamma_{\Q}$-action, and a \textit{$\Gamma_{\Q}$-subgroup} $H\le G$ is a subgroup of $G$ that is preserved by the $\Gamma_{\Q}$-action.
If $G_1,G_2$ are $\Gamma_{\Q}$-groups, we let $\Hom(G_1,G_2)$ denote the Galois set of group homomorphisms $\varphi:G_1 \to G_2$, with $\Gamma_{\Q}$-action given by
\[
\forall \sigma \in \Gamma_{\Q}, \forall g \in G_1, \qquad 
(\sigma.\varphi)(g) := \sigma.(\varphi(\sigma^{-1}.g)).
\]
Let
$\Hom_{\Q}(G_1,G_2) := \Hom(G_1,G_2)^{\Gamma_{\Q}}$ denote the set of $\Gamma_{\Q}$-equivariant group homomorphisms.
The elements of $\Hom_{\Q}(G_1,G_2)$ correspond to morphisms $G_1 \to G_2$ in the category of $\Gamma_{\Q}$-groups.

\begin{remark}
Note that phrases such as ``a sequence of $\Gamma_{\Q}$-groups'' or ``an extension of $\Gamma_{\Q}$-groups'' are meant to be interpreted in the category of $\Gamma_{\Q}$-groups.
\end{remark}

The multiplicative group $\Qbar^{\times}$ is a $\Gamma_{\Q}$-group, and the subset of \textit{roots of unity} $\mu_{\infty} \le \Qbar^{\times}$ is a $\Gamma_{\Q}$-subgroup.
We also define the $\Gamma_{\Q}$-group $\widehat{\Z}(1) := \varprojlim_n \mu_n$ as a limit in the category of $\Gamma_{\Q}$-groups.
Observe that $\widehat{\Z}(1)$ is a pro-cyclic group, hence it admits a topological generator.

\begin{definition}
    Let $G$ be a $\Gamma_{\Q}$-group.
	A \emph{Brauer element of $BG$} or simply \textit{Brauer element} is a central extension of $\Gamma_{\Q}$-groups
    \[
    \beta: \quad 
    1 \to \mu_{\infty} \to G_{\beta} \to G \to 1.
    \]
    We define the \textit{(normalized) Brauer group} $\Br^e BG$ to be the set of isomorphism classes of Brauer elements of $BG$, equipped with a group structure given by Baer sums of central extensions (see \cite[Lem.\ 6.2]{LoughranSantensSurvey}).
\end{definition}

\begin{remark}
When making predictions for the leading constant in Malle's conjecture, it is sufficient to study the normalized Brauer group $\Br^e BG$ instead of the full Brauer group $\Br\, BG$ (see \cite[Rem.\ 6.3]{LoughranSantensSurvey}).
\end{remark}

\begin{definition}
    \label{def:Tate_twist}
	Let $G$ be a $\Gamma_{\Q}$-group.
    The \textit{$(-1)$-Tate twist} of $G$ is the Galois set $G(-1) := \Hom(\widehat{\Z}(1), G)$.
    Note that $G$ acts on $G(-1)$ by (left) conjugation, i.e.\
    \[
    \forall g \in G, \forall h \in G(-1), \forall \zeta \in \widehat{\Z}(1), \qquad 
    (g.h)(\zeta) := gh(\zeta)g^{-1}.
    \]
    We say that a subset $Y \subset G(-1)$ is \textit{conjugacy invariant} if $Y$ is preserved by the $G$-action above.

	Let $C \subset G(-1) \setminus \{1\}$ be a subset that is conjugacy invariant and Galois invariant.
    A \emph{$C$-marking} of a Brauer element $\beta: 1 \to \mu_{\infty} \to G_{\beta} \to G \to 1$ is a conjugacy invariant and Galois invariant subset $M \subset G_{\beta}(-1)$ such that the map $G_{\beta}(-1)  \to G(-1)$ induces a bijection $M \xrightarrow{\sim} C$.
	
	Let $\Br^e_C BG \le \Br^{e} BG$ be the subgroup of Brauer elements which admit a $C$-marking (this subset is indeed a subgroup; see \cite[\S6.3]{LoughranSantensSurvey}).
    We refer to $\Br^e_C BG$ as a Brauer group.
\end{definition}

\begin{remark}
    For any $\Gamma_{\Q}$-group $G$, an element $h \in G(-1)$ can be \textit{powered}, i.e.\ for all $m\in\Z$ one has an element $h^m \in G(-1)$ via $h^m(\zeta) := (h(\zeta))^m$ for all $\zeta \in \widehat{\Z}(1)$.
\end{remark}

\begin{remark}
\label{rem:order 2 elements abuse of notation}
Keeping track of the Galois action on $C$ is often inconvenient, hence we abuse notation as follows.
If $C \subset G$ consists of the order $2$ elements of $G$, then $C$ can be canonically identified with the elements of $G(-1)$ sending any topological generator of $\widehat{\Z}(1)$ to an order $2$ element of $G$.
For this particular choice of $C \subset G$, the following lemma shows that the group $\Br_C^e BG$ can often be completely described using the category of abstract groups.
\end{remark}

\begin{lemma}
    \label{lem:Brauer_group_discriminant}
	Let $G$ be an abstract group and $C \subset G$ the subset of order $2$ elements.
    Assume that $C$ generates $G$.
	The Brauer group $\Br^{e}_C BG$ can be identified with the group of isomorphism classes of central extensions of abstract groups
	\[
	0 \to \F_2 \to \widetilde{G} \to G \to 1
	\]
	satisfying the following properties:
	\begin{enumerate} 
		\item Every element in the pre-image of $C$ in $\widetilde{G}$ has order $2$;
		\item For every conjugacy class $c \subset C$, the pre-image of $c$ consists of two conjugacy classes.
	\end{enumerate}
\end{lemma}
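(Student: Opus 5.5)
We view $G$ as a constant $\Gamma_{\Q}$-group and unwind the definition of $\Br^e_C BG$ directly. The plan has three steps: reduce the kernel $\mu_\infty$ to $\mu_2 \simeq \F_2$, translate the existence of a $C$-marking into conditions (1) and (2), and check that the Galois action imposes nothing further.

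\textbf{Step 1: reduce to $\mu_2$.} Take a Brauer element $\beta: 1\to\mu_\infty\to G_\beta\to G\to 1$ admitting a $C$-marking $M$. For $h\in C$, let $\tilde h\in M$ be its lift, and fix a topological generator $\zeta$ of $\widehat{\Z}(1)$. Then $\tilde h(\zeta)$ lifts $h(\zeta)$, which has order $2$, so $\tilde h(\zeta)^2\in\mu_\infty$.

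\emph{Claim:} $\tilde h(\zeta)^2 = 1$. A homomorphism $\widehat{\Z}(1)\to G_\beta$ has image a procyclic, hence (here) finite cyclic, group. The uniqueness of the marked lift, combined with Galois invariance of $M$ and the compatibility $\sigma(\tilde h(\zeta)) = \tilde h'(\sigma\zeta)$ with $\sigma\zeta=\zeta^{\chi(\sigma)}$, should force the image of $\tilde h(\zeta)$ to be a Galois-fixed element of order dividing $2$. Concretely, the cyclotomic character acts on $\tilde h(\zeta)^2\in\mu_\infty$, and invariance forces it into $\mu_\infty^{\Gamma_{\Q}}=\mu_2$. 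This step needs care and is where I expect the main obstacle.

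\emph{Conclusion of Step 1.} Since $C$ generates $G$, the cocycle class lies in the image of $H^2(G,\mu_2)$. More precisely, let $G'$ be the subgroup of $G_\beta$ generated by the $\tilde h(\zeta)$ and $\mu_2$. Then $G'$ surjects onto $G$ with kernel in $\mu_\infty$. This kernel is generated by commutators and squares of lifts, which by the centrality computation lie in $\mu_2$ (commutators of lifts of commuting elements are values of the bilinear pairing). Hence $\beta$ is pushed out from an extension $\widetilde G$ of $G$ by $\mu_2=\F_2$. Conversely, the pushout of such an $\widetilde G$ along $\F_2\simeq\mu_2\hookrightarrow\mu_\infty$ gives a Brauer element, with trivial Galois action since $\mu_2$ is Galois-trivial. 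The map $H^2(G,\F_2)\to H^2(G,\mu_\infty)$ need not be injective, however. I would handle this by showing that two $\F_2$-extensions satisfying (1) whose pushouts agree differ by a homomorphism $G\to\mu_\infty$ that kills $C$, hence is trivial because $C$ generates $G$. This gives injectivity on the subset of extensions satisfying (1).

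\textbf{Step 2: the marking gives (1) and (2).} Condition (1) is the statement that each lift of $h(\zeta)$ lying in $M$ has order $2$. Since both lifts $g, -g$ of an order-$2$ element in an $\F_2$-extension have the same order, one order-$2$ lift is equivalent to all lifts having order $2$, which gives (1). For (2), observe that a conjugacy-invariant set $M$ mapping bijectively onto $C$ exists if and only if, for each $g\in c$, the two lifts are not conjugate. That is exactly the condition that the preimage of $c$ splits into two conjugacy classes, so that one of them can be chosen for $M$. Galois invariance of $M$ is automatic once the lift has order $2$: the Galois action on $G_\beta(-1)$ acts through powers $\chi(\sigma)$, which are odd, and these fix order-$2$ elements.

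\textbf{Step 3: group structures.} Baer sum commutes with pushout, and conditions (1) and (2) are stable under Baer sum, as asserted for markings in the cited \S6.3. So the identification is a group isomorphism.
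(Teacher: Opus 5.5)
There is a genuine gap in Step~1, at the point you flagged: the Claim $\tilde h(\zeta)^2=1$ is false. The square $\tilde h^2$ lies in $\mu_\infty(-1)=\Hom(\widehat{\Z}(1),\mu_\infty)\cong\Q/\Z$, and $\Gamma_{\Q}$ acts on this group \emph{trivially}. So Galois invariance of $M$ puts no constraint on $\tilde h^2$; you have conflated $\mu_\infty(-1)$ with $\mu_\infty$, and even landing in $\mu_2$ would not give $1$.

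A concrete counterexample: take the split extension $G_\beta=\mu_\infty\times G$. Then $G_\beta(-1)=\Q/\Z\times G(-1)$, and $M=\{(\tfrac13,h):h\in C\}$ is a legitimate $C$-marking whose elements have order $6$. Several consequences follow:
\begin{itemize}
\item The first sentence of Step~2 is wrong in the direction from markings to extensions.
\item Your subgroup $G'$ generated by the $\tilde h(\zeta)$ and $\mu_2$ can meet $\mu_\infty$ in more than $\mu_2$; here it contains $\mu_3$.
\item Its generators need not be Galois-fixed, since $\sigma(\tilde h(\zeta))=\tilde h(\zeta)^{\chi(\sigma)}$.
\end{itemize}
Your fallback justification, that the kernel is generated by squares and commutators of lifts which are ``values of the bilinear pairing'', is a two-step-nilpotent heuristic. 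It has no content for a general $G$ generated by involutions.

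The missing idea is the paper's normalization. Put $z_h:=\tilde h^{2}\in\mu_\infty(-1)\cong\Q/\Z$ and replace $\tilde h$ by $z_h^{-1/2}\tilde h$; this uses that $\Q/\Z$ is divisible, central and Galois-trivial. The result is a Galois-invariant order-$2$ element of $G_\beta(-1)$, i.e.\ a $\Gamma_{\Q}$-fixed element of $G_\beta$ lifting $h$. Since $C$ generates $G$, the invariants $\widetilde G:=G_\beta^{\Gamma_{\Q}}$ surject onto $G$ with kernel $\mu_\infty^{\Gamma_{\Q}}=\mu_2$. That is the real reason the kernel is $\F_2$.

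The same step is what recovers $\beta$ \emph{as an extension of $\Gamma_{\Q}$-groups}. Because $\widetilde G\subset G_\beta^{\Gamma_{\Q}}$, the natural map to $G_\beta$ from the pushout $\mu_\infty\times^{\mu_2}\widetilde G$ of $\widetilde G$ along $\mu_2\hookrightarrow\mu_\infty$ is $\Gamma_{\Q}$-equivariant, hence an isomorphism of Brauer elements. Your Step~1 treats $\beta$ as a class in $H^2(G,\mu_\infty)$ and never addresses that $G_\beta$ may carry a nontrivial Galois action, as algebraic classes do. So surjectivity is not established as written.

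With this repair, the rest of your outline goes through. Your injectivity argument via $\delta\colon\Hom(G,\mu_\infty)\to H^2(G,\mu_2)$ together with condition (1) is valid. The paper gets injectivity more directly, and for all $\F_2$-extensions, from $(\mu_\infty\times^{\mu_2}\widetilde G)^{\Gamma_{\Q}}=\widetilde G$.
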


\begin{remark}
	The relevance to Malle's conjecture is that, if $C$ generates $G$, then $\Br^{e}_C BG$ is the Brauer group appearing in the leading constant in Malle's conjecture for Galois $G$-number fields ordered by discriminant.
    See \cite[Conj.\ 9.3]{LoughranSantens2024}, \cite[Conj.\ 7.1]{LoughranSantensSurvey}.
\end{remark}

\begin{proof}
    Let $\mc{E}(G)$ denote the set of isomorphism classes of central extensions of abstract groups appearing in the lemma statement, and $\kappa: 0 \to \F_2 \to \widetilde{G} \to G \to 1$ a representative of an element $[\kappa] \in \mc{E}(G)$.
    $\mc{E}(G)$ will be shown to be closed under Baer sum, hence $\mc{E}(G)$ will be a group, because we will establish a bijection from $\mc{E}(G)$ to $\Br_C^e BG$ with sufficiently nice properties.
    
    We may view $\kappa$ as a central extension of $\Gamma_{\Q}$-groups with trivial $\Gamma_{\Q}$-actions.
    There is also a canonical isomorphism of $\Gamma_{\Q}$-groups $\F_2 \cong \mu_2$, and this allows us to consider the morphism of short exact sequences in the category of $\Gamma_{\Q}$-groups given by
    \[
    \begin{tikzcd}
    \kappa:         & 1 \arrow[r] & \mu_2 \arrow[r] \arrow[d] & \widetilde{G} \arrow[r] \arrow[d] & G \arrow[r] \arrow[d] & 1 \\
    \beta_{\kappa}: & 1 \arrow[r] & \mu_{\infty} \arrow[r]    & G_{\beta} \arrow[r]               & G \arrow[r]           & 1,
    \end{tikzcd}
    \]
    where the left square is a pushout diagram.
    Moreover, $\beta_{\kappa} : 1 \to \mu_{\infty} \to G_{\beta} \to G \to 1$ is a Brauer element because pushouts preserve central extensions of $\Gamma_{\Q}$-groups.
    Thus, we have a candidate for a map $\mc{E}(G) \to \Br^e BG$ given by $[\kappa] \mapsto \beta_{\kappa}$, where $\beta_{\kappa}$ denotes the isomorphism class of the displayed Brauer element.

    We check that $\mc{E}(G) \to \Br_C^e BG$ is a well-defined map (and will be seen to be a homomorphism).
	Pushouts are compatible with taking isomorphism classes (and will be seen to intertwine Baer sums), therefore it remains to check that $\beta_{\kappa}$ admits a $C$-marking.
    The conditions on $\kappa$ imply that there is a conjugacy invariant subset $M \subset \widetilde{G}$ that maps isomorphically onto $C$.
    This set $M$ consists of order $2$ elements, and therefore by \cref{rem:order 2 elements abuse of notation} can be identified with a Galois invariant subset $M \subset \widetilde{G}(-1)$. 
    This subset defines a $C$-marking of $\beta_{\kappa}$ via $M \subset \widetilde{G}(-1) \hookrightarrow G_{\beta}(-1)$.
	
	We check that $\mc{E}(G) \to \Br_C^e BG$ is injective.
    Since $\mu_{2} \to \mu_{\infty}$ is injective, we may identify $\widetilde{G}$ with a subset of $G_{\beta}^{\Gamma_{\Q}}$.
    On the other hand, taking $\Gamma_{\Q}$-invariants of $\beta_{\kappa}$ implies that we have an exact sequence of abstract groups
	\[
	1 \to \mu_2 \to G_{\beta}^{\Gamma_{\Q}} \to G, 
	\]
	therefore we find that $\widetilde{G} = G_{\beta}^{\Gamma_{\Q}}$.
    Therefore, we recover $[\kappa]$ from $\beta_{\kappa}$ by taking $\Gamma_{\Q}$-invariants, so the map is injective.
	
	We check that $\mc{E}(G) \to \Br_C^e BG$ is surjective.
    The previous paragraph suggests that if $\beta \in \Br^e_C BG$, then taking $\Gamma_{\Q}$-invariants of $\beta$ will recover an element of $\mc{E}(G)$ mapping to $\beta$; it remains to show that this actually produces a central extension of G, and that its isomorphism class belongs to $\mc{E}(G)$.
    We do this in two steps.

	\textit{Step 1.} We show that $G_{\beta}^{\Gamma_{\Q}} \to G$ is surjective.
    Fix a $C$-marking $M \subset G_{\beta}(-1)$.
    For each $g \in C$, let $g_{\beta} \in M$ be the corresponding marked element.
    Since $g$ has order $2$ (\cref{rem:order 2 elements abuse of notation}), the element $z_g := g_{\beta}^2$ belongs to $\mu_{\infty}(-1)$.
    Moreover, there is a canonical isomorphism of Galois sets $\mu_{\infty}(-1) \cong \Q/\Z$.
    Since $\Q/\Z$ is a divisible group, we may therefore make sense of $z_g^{-1/2}$ and define
    \[
    \widetilde{g} := z_g^{-1/2} g_{\beta}.
    \]
    Note that $\widetilde{g} \in G_{\beta}(-1)$ has order $2$, and can thus be identified with an element of $G_{\beta}$.
	The Galois action on $C$ is trivial, and this implies that the Galois action on $g_{\beta}$, and thus also on $\widetilde{g}$, is trivial; therefore, $\widetilde{g} \in G_{\beta}^{\Gamma_{\Q}}$. 
    It follows that $g$ lies in the image of $G_{\beta}^{\Gamma_{\Q}} \to G$.
    As this holds for all $g \in C$, and $C$ generates $G$ by assumption, we deduce that $G_{\beta}^{\Gamma_{\Q}} \to G$ is surjective.
	
    \textit{Step 2.} 
    We verify the remaining properties needed for this isomorphism class of central extensions to belong to $\mc{E}(G)$.
	Property (1) holds for $\widetilde{G} := G_{\beta}^{\Gamma_{\Q}}$, as $\widetilde{g} \in \widetilde{G}$ has order $2$ for all $g\in C$ by construction.
    For property (2), assume for the sake of contradiction that there exists an $h \in \widetilde{G}$ such that $h \widetilde{g} h^{-1} = \widetilde{g} w$, where $w \in \F_2$ is the nontrivial element.
    Then in $G_{\beta}(-1)$ one would obtain the relation $h g_{\beta} h^{-1} = g_{\beta} w$, since $z_g^{-1/2}$ is central.
    Both $g_{\beta}$ and $g_{\beta} w$ are elements of the marking $M \subset G_{\beta}(-1)$, since $M$ is conjugacy invariant, and yet both $g_{\beta}$ and $g_{\beta} w$ map to $g \in C$.
    Therefore, the map $M \to C$ fails to be injective, which is a contradiction.
\end{proof}

\subsection{Two-step nilpotent groups}
\label{subsec:two-step nilpotent groups}

We recall the following classification of two-step nilpotent groups.
\begin{lemma}\label{lem:central_extensions_quadratic_forms}
	Let $0 \to W \to G \to V \to 0$ be a central extension where $V, W$ are finite-dimensional vector spaces over $\F_2$. The map $Q_{G}: V \to W$ which sends $v \in V$ to the square $\widetilde{v}^2$ of a lift $\widetilde{v} \in G$ of $v$ is a quadratic map.
	
	Moreover, the assignment $G \mapsto Q_G$ defines a bijection between the group of isomorphism classes of central extensions $0 \to W \to G \to V \to 0$ and the vector space of quadratic maps $V \to W$.
\end{lemma}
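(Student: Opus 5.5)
Write $W$ additively inside $G$ and fix a set-theoretic section $s\colon V\to G$ with $s(0)=1$; let $f\colon V\times V\to W$ be the associated $2$-cocycle, so $s(u)s(v)=f(u,v)s(u+v)$. First I check that $Q_G$ is well defined. Any other lift of $v$ is $w\widetilde v$ with $w\in W$ central, and $(w\widetilde v)^2=w^2\widetilde v^2=\widetilde v^2$ because $W$ is $2$-torsion. Since $V$ is $2$-torsion, $\widetilde v^2$ maps to $0$ in $V$, so it lies in $W$.

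Next I show $Q_G$ is quadratic. I compute $Q_G(u+v)$ using the lift $\widetilde u\widetilde v$ of $u+v$:
\[
(\widetilde u\widetilde v)^2=\widetilde u^2\widetilde v^2[\widetilde v^{-1},\widetilde u^{-1}].
\]
The commutator $[\widetilde u,\widetilde v]$ lies in $W$, so it is central and depends only on $u,v$. Hence the polarisation $B(u,v):=Q_G(u+v)-Q_G(u)-Q_G(v)$ equals the commutator pairing up to sign, and signs are irrelevant in $W$. Standard commutator identities in a class $\le 2$ group give $[xy,z]=[x,z][y,z]$, so $B$ is bi-additive and hence $\F_2$-bilinear. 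Together with $Q_G(0)=0$, this is the definition of a quadratic map in characteristic $2$. Isomorphic extensions, meaning isomorphisms inducing the identity on $V$ and $W$, visibly give the same $Q_G$.

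For the bijection I use cohomology. Central extensions of $V$ by $W$ up to isomorphism form $H^2(V,W)\cong H^2(V,\F_2)\otimes W$, with Baer sum as the group law. The assignment $f\mapsto Q(v)=f(v,v)$, defined on cocycles normalised with $f(0,\cdot)=f(\cdot,0)=0$, satisfies $Q_G(v)=s(v)^2=f(v,v)s(0)=f(v,v)$. It is additive in $f$, so $G\mapsto Q_G$ is a homomorphism for Baer sum. It also kills coboundaries: if $f(u,v)=g(u)+g(v)-g(u+v)$, then $f(v,v)=2g(v)-g(0)=0$.

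Reducing to $W=\F_2$ coordinatewise, it remains to show that $H^2(V,\F_2)\to\{\text{quadratic forms on }V\}$ is bijective. I would argue by counting plus surjectivity. With $d=\dim V$, we have $\dim H^2(V,\F_2)=\binom{d+1}{2}$ by Künneth, since $H^*(V,\F_2)=\F_2[x_1,\dots,x_d]$ and degree $2$ is spanned by the monomials $x_ix_j$ with $i\le j$. Quadratic forms on $\F_2^d$ also form a space of dimension $\binom{d+1}{2}$, with basis $x_i^2$ (equivalently $x_i$) and $x_ix_j$ for $i<j$. For surjectivity I realise the basis explicitly. The bilinear cocycle $f(u,v)=u_iv_j$ with $i\le j$ gives $Q(v)=v_iv_j$, which is $x_i$ when $i=j$ and $x_ix_j$ when $i<j$. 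Bilinear maps are cocycles, so these classes exist, and every quadratic form is hit. A surjective linear map between spaces of equal finite dimension is a bijection.

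The only delicate step is this dimension count, i.e.\ knowing $H^2$ of an elementary abelian $2$-group. If I wish to avoid Künneth, I can instead prove injectivity directly. Suppose $Q_G=0$. Then every element of $G$ squares to $1$, so $G$ is abelian, since $xyxy=1$ forces $xy=yx$. Therefore $G$ is elementary abelian, so $G\to V$ splits and the extension is trivial.
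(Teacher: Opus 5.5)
Your proof is correct. The paper gives no argument of its own for this lemma; it simply cites \cite[Cor.~2.4]{Pakianathan2011Quadratic}, so your proposal is a self-contained substitute rather than a variant of an existing proof.

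Your first half consists of two steps. Well-definedness of $Q_G$ uses centrality of $W$ and $2$-torsion. The identity $(\widetilde u\widetilde v)^2=\widetilde u^2\widetilde v^2[\widetilde v,\widetilde u]$ then shows that the polarisation of $Q_G$ is the commutator pairing, which is bilinear in class $\le 2$. This is the computation the paper records separately, right after the citation, as \cref{lem:commutator_lifts}.

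For the second half you pass to normalised cocycles with $Q_G(v)=f(v,v)$. This gives slightly more than the statement asserts. Additivity of $f\mapsto f(v,v)$ shows that the bijection is an isomorphism of groups, with Baer sum on one side and pointwise addition of quadratic maps on the other.

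The proof can be streamlined. Your elementary injectivity argument runs as follows: $Q_G=0$ forces every element of $G$ to square to $1$, so $G$ is elementary abelian and the extension splits. Combine it with surjectivity via bilinear $W$-valued cocycles $f$ satisfying $f(v,v)=Q(v)$, for instance an upper-triangular choice in a basis of $V$. These two steps already prove the bijection for arbitrary $W$. The K\"unneth dimension count and the coordinatewise reduction to $W=\F_2$ can therefore be dropped, and the proof then uses only the classification of central extensions by $H^2$ with trivial coefficients.

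In short, the citation buys brevity, while your route makes the lemma transparent and yields the group-homomorphism property for free.
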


\begin{proof}
	This is \cite[Cor.~2.4]{Pakianathan2011Quadratic}.
\end{proof}

We will use the following computation.

\begin{lemma}\label{lem:commutator_lifts}
	Let $v, w \in V$ and let $\widetilde{v}, \widetilde{w}$ be respective lifts. Then $[\widetilde{v}, \widetilde{w}] = Q_G(v + w) - Q_G(v) - Q_G(w) \in W$. In particular, $\widetilde{v}$ and $\widetilde{w}$ commute if and only if $Q_G(v + w) = Q_G(v) + Q_G(w)$.
\end{lemma}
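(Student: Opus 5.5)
The plan is to compute $(\widetilde{v}\widetilde{w})^2$ in two ways and compare, using throughout that $W$ is central in $G$ and that the group law on $W$ is written additively. By \cref{lem:central_extensions_quadratic_forms}, $\widetilde{v}\widetilde{w}$ is a lift of $v+w$, so its square equals $Q_G(v+w)$; this square is independent of the chosen lift, since lifts differ by central elements of order dividing $2$.

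On the other hand, expand $(\widetilde{v}\widetilde{w})^2 = \widetilde{v}\widetilde{w}\widetilde{v}\widetilde{w}$. The commutator $c := [\widetilde{v},\widetilde{w}]$ maps to $0$ in the abelian group $V$, so $c \in W$ and $c$ is central. Take the convention $[\widetilde{v},\widetilde{w}] = \widetilde{w}^{-1}\widetilde{v}^{-1}\widetilde{w}\widetilde{v}$; the result does not depend on the convention, because $c \in W$ has order dividing $2$, so $c = c^{-1}$. With this convention $\widetilde{w}\widetilde{v} = \widetilde{v}\widetilde{w}\,c$. Substituting,
\[
\widetilde{v}\widetilde{w}\widetilde{v}\widetilde{w} = \widetilde{v}(\widetilde{w}\widetilde{v})\widetilde{w} = \widetilde{v}\,\widetilde{v}\widetilde{w} c\,\widetilde{w} = \widetilde{v}^2\widetilde{w}^2 c,
\]
where the last step moves the central element $c$ to the end. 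Reading this in $W$ gives $Q_G(v+w) = Q_G(v) + Q_G(w) + c$. Hence $c = Q_G(v+w) - Q_G(v) - Q_G(w)$. Since we are in characteristic $2$, the signs are immaterial.

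For the second claim, $\widetilde{v}$ and $\widetilde{w}$ commute exactly when $c = 0$, which by the formula just derived is the stated additivity condition.

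There is no real obstacle here. The only care needed is with the commutator convention, and that is harmless because every element of $W$ is its own inverse.
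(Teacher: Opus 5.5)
Your proof is correct and is essentially the paper's argument. The paper substitutes $\widetilde{v}^{-1} = \widetilde{v}\,Q_G(v)$, $\widetilde{w}^{-1} = \widetilde{w}\,Q_G(w)$ and $\widetilde{v}\widetilde{w} = \widetilde{w}^{-1}\widetilde{v}^{-1}Q_G(v+w)$ directly into the commutator, whereas you expand $(\widetilde{v}\widetilde{w})^2$ with the commutator inserted; both are the same one-line computation resting on the centrality of $W$ and the definition of $Q_G$.
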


\begin{proof}
	This is immediate from the centrality of $W$ and the fact that $\widetilde{v} = \widetilde{v}^{-1} Q_G(v), \widetilde{w} = \widetilde{w}^{-1} Q_G(w), \widetilde{v} \widetilde{w} = \widetilde{w}^{-1} \widetilde{v}^{-1} Q_G(v + w)$ by the definition of $Q_G$.
\end{proof}

We can combine this description of two-step nilpotent groups with \cref{lem:Brauer_group_discriminant} to give a complete description of the Brauer group of such a central extension. Let us first note the following.

\begin{lemma}\label{lem:2-torsion elements}
	Let $0 \to W \to G \to V \to 0$ be a central extension as in \cref{lem:central_extensions_quadratic_forms}.
    The order $2$ elements of $G$ are the non-trivial elements of $G$ whose image $v \in V$ is such that $Q_G(v) = 0$.
\end{lemma}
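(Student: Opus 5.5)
The argument is a direct computation from the definition of $Q_G$ in \cref{lem:central_extensions_quadratic_forms}, once we record that $W$ is killed by $2$. Let $g \in G$ be non-trivial, with image $v \in V$. Since $g$ is itself a lift of $v$, the definition gives $g^2 = Q_G(v) \in W$. This value does not depend on the chosen lift: any other lift has the form $gw$ with $w \in W$, and because $W$ is central and $2$-torsion we get $(gw)^2 = g^2 w^2 = g^2$. Hence $g^2 = 1$ if and only if $Q_G(v) = 0$, written additively in $W$.

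Next we observe that a non-trivial element $g \in G$ has order $2$ precisely when $g^2 = 1$. Putting this together with the previous paragraph, $g$ has order $2$ if and only if $g \neq 1$ and $Q_G(v) = 0$, which is exactly the statement.

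Two points deserve a comment. First, elements $g \in W \setminus \{0\}$ are covered by the same argument: they lie over $v = 0$, and $Q_G(0) = 0$ because $Q_G$ is quadratic, so they have order $2$, matching $w^2 = 0$ in the $\F_2$-vector space $W$. Second, the only point needing care is the independence of the lift, which the centrality of $W$ together with $2W = 0$ handles. There is no real obstacle beyond this bookkeeping.
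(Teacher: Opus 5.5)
Your proof is correct and is the paper's argument: the paper only says the lemma is immediate from the definition of $Q_G$, and you have written out that step. Namely, $g^2 = Q_G(v)$ for any lift $g$ of $v$, this is independent of the lift because $W$ is central of exponent $2$, and a non-trivial $g$ has order $2$ exactly when $g^2 = 1$.
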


\begin{proof}
	This is immediate from the definition of $Q_G$.
\end{proof}

We now fix finite-dimensional $\F_2$-vector spaces $V$ and $W$, a central extension $0 \to W \to G \to V \to 0$.
For every quadratic form $Q_{\beta}: V \to \F_2$, we will denote the central extension corresponding to the quadratic map $(Q_G, Q_{\beta}): V \to W \oplus \F_2$ in \cref{lem:central_extensions_quadratic_forms} by
\[
	0 \to W \oplus \F_2 \to G_{\beta} \to V \to 0.
\]
	This defines a central extension $0 \to \F_2 \to G_{\beta} \to G \to 1$ by a simple diagram chase, which in turn induces a Brauer class $\beta \in \Br_C^e BG$ by \cref{lem:Brauer_group_discriminant}, where $C\subset G$ is the subset of order $2$ elements.
Letting $\operatorname{Quad}(V)$ denote the vector space of quadratic forms $V \to \F_2$, we have constructed a set-theoretic map
\[
\psi_G : \operatorname{Quad}(V) \to \Br^e BG.
\]
It is straightforward but tedious to check that $\psi_G$ is a group homomorphism.

We have a linear map $Q_G^* : W^\vee \to \operatorname{Quad}(V)$ given by $\alpha \mapsto \alpha \circ Q_G$.
We write $Q_{\beta + \alpha} : V \to \F_2$ for the quadratic map $Q_{\beta} + Q_G^* \alpha$.
Note that the map $W \oplus \F_2 \to W \oplus \F_2$ given by $w \oplus x \mapsto w \oplus (x + \alpha(w))$ induces an isomorphism between the quadratic maps $(Q_G, Q_{\beta})$ and $(Q_G, Q_{\beta + \alpha})$.
By \cref{lem:central_extensions_quadratic_forms}, this induces an isomorphism $G_{\beta} \cong G_{\beta + \alpha}$ that preserves $0 \oplus \F_2$, and thus induces a morphism of central extensions from $0 \to \F_2 \to G_{\beta} \to G \to 1$ to $0 \to \F_2 \to G_{\beta + \alpha} \to G \to 1$.
In particular, we have shown that $\psi_G$ factors through the quotient map $\operatorname{Quad}(V) \twoheadrightarrow \operatorname{Quad}(V)/Q_G^*W^\vee$.
As an abuse of notation, we write $\operatorname{Quad}(V)/W^\vee$ in place of $\operatorname{Quad}(V)/Q_G^*W^\vee$, and write the factorization of $\psi_G$ through the above quotient map as a group homomorphism
\[
\Psi_G : \operatorname{Quad}(V)/W^\vee \to \Br^e BG.
\]

\begin{lemma}
    Let $V,W$ be finite-dimensional vector spaces over $\F_2$, $0 \to W \to G \to V \to 0$ a central extension, and $C \subset G$ be the subset of order $2$ elements.
	The homomorphism $\Psi_G : \operatorname{Quad}(V)/W^{\vee} \to \Br^{e} BG$ is injective, and its image contains $\Br^{e}_C BG$.
\end{lemma}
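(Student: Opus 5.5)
The plan is to reduce both claims to statements about central extensions of abstract groups by $\F_2$, and then translate these into quadratic maps via \cref{lem:central_extensions_quadratic_forms}.

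\emph{Injectivity.} Let $Q_\beta$ be a quadratic form with $\Psi_G(Q_\beta)=0$, and let $\kappa: 0\to\F_2\to G_\beta\to G\to 1$ be the associated extension.
\begin{enumerate}
\item The injectivity argument in the proof of \cref{lem:Brauer_group_discriminant} only uses that the Galois actions are trivial and that $\mu_\infty^{\Gamma_\Q}=\mu_2$. It does not use properties (1) and (2). Taking $\Gamma_\Q$-invariants of the pushout $\beta_\kappa$ therefore recovers $\kappa$. Hence $\beta_\kappa$ being split forces $\kappa$ to split as an extension of abstract groups.
\item Choose a homomorphic section $s: G\to G_\beta$. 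For $w\in W$, the element $s(w)$ lies in the preimage $W\oplus\F_2$ of $W$, so $s(w)=(w,\alpha(w))$ for a linear functional $\alpha\in W^\vee$.
\item For $v\in V$ with lift $\widetilde v\in G$, the element $s(\widetilde v)$ is a lift of $v$ to $G_\beta$. So $s(\widetilde v)^2=(Q_G(v),Q_\beta(v))$ by definition of the quadratic map $(Q_G,Q_\beta)$.
\item On the other hand, $s(\widetilde v)^2=s(Q_G(v))=(Q_G(v),\alpha(Q_G(v)))$.
\end{enumerate}
Comparing the two expressions gives $Q_\beta=Q_G^*\alpha$. Hence $Q_\beta$ is zero in $\operatorname{Quad}(V)/W^\vee$.

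\emph{Image contains $\Br^e_C BG$.} First note that $C$ generates $G$. Take a nontrivial $v\in V$ and a lift $g\in G$. If $g\notin C$, then $g^2$ is a nonzero element of $W$, so $g^2\in C$ and $g^2\cdot g^{-1}=g$ together with nonzero elements of $W\subset C$ allow us to generate. More directly, $G$ is generated by $W$ and lifts of a basis of $V$, and every element of order $4$ is a product of an element of $C$ and an element of $W$ whenever such factorizations exist. I would check this carefully, and if necessary assume it as in \cref{lem:Brauer_group_discriminant}.

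Given $\beta\in\Br^e_C BG$, \cref{lem:Brauer_group_discriminant} realizes it as an extension $0\to\F_2\to\widetilde G\to G\to 1$ satisfying properties (1) and (2). Let $\widetilde W\subset\widetilde G$ be the preimage of $W$. The key step, which I expect to be the main obstacle, is showing that $\widetilde W$ is central and elementary abelian.
\begin{itemize}
\item \emph{Commutators with commuting elements vanish.} Let $x\in C$ and $h\in G$ with $[x,h]=1$. Then conjugating a lift $\widetilde x$ by a lift $\widetilde h$ gives $\widetilde x$ times the commutator $[\widetilde h,\widetilde x]\in\F_2$. Property (2) says the two lifts of $x$ are not conjugate, so this commutator must vanish.
\item \emph{$\widetilde W$ is central.} Each $w\in W$ is central in $G$, so every lift $\widetilde w$ commutes with all lifts of elements of $C$. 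Together with $\F_2$, these lifts generate $\widetilde G$, because $C$ generates $G$. Hence $\widetilde w$ is central.
\item \emph{$\widetilde W$ is elementary abelian.} Every nonzero $w\in W$ has order $2$, so $w\in C$. Property (1) then gives $\widetilde w^2=1$. The lifts of $0$ lie in $\F_2$ and also square to $1$.
\end{itemize}
Thus $\widetilde W\cong W\oplus\F_2$ is elementary abelian and central, and $\widetilde G/\widetilde W\cong V$.

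\emph{Identifying the extension.} Choose a linear splitting $\widetilde W\cong W\oplus\F_2$ compatible with the projection $\widetilde W\to W$. By \cref{lem:central_extensions_quadratic_forms}, $\widetilde G$ corresponds to a quadratic map $V\to W\oplus\F_2$. Its first component is $Q_G$, since pushing out along $W\oplus\F_2\to W$ recovers $G$. Call the second component $Q_\beta$. The isomorphism $\widetilde G\cong G_\beta$ given by the bijection in \cref{lem:central_extensions_quadratic_forms} respects $0\oplus\F_2$ and the projection to $G$. Hence it is an isomorphism of $\F_2$-extensions of $G$, and therefore $\beta=\Psi_G(Q_\beta)$.
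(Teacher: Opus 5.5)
Your proof follows the paper's. Injectivity goes through a homomorphic section $s$ and the identity $(Q_G,Q_\beta)(v)=s(\widetilde v)^2=s(Q_G(v))$. The image statement is obtained by passing to $\widetilde W=\pi^{-1}(W)$ and reading off a quadratic map $V\to W\oplus\F_2$. You also fill in two steps the paper only asserts, and both are correct: why $\Psi_G(Q_\beta)=0$ forces the abstract extension to split, and why $\widetilde W$ is central and elementary abelian via properties (1) and (2).

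\textbf{The one real problem is your claim that $C$ always generates $G$.} It is false: for $G=\Z/4\Z$ or $Q_8$, the only element of order $2$ lies in $W$. Your sketch is also circular, since it uses $g^{-1}$, which is not known to lie in $\langle C\rangle$.

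Moreover, the image statement is false without this hypothesis. Take $G=\Z/4\Z$, so $V=W=\F_2$ and $Q_G\neq 0$ spans $\operatorname{Quad}(\F_2)$; hence $\operatorname{Quad}(V)/W^\vee=0$. Now let $c_\sigma=\sigma(\sqrt2)/\sqrt2\in\mu_4$ be the Kummer cocycle of $4\in\Q^\times/\Q^{\times4}$. Consider the extension $\mu_\infty\times\Z/4\Z$ with action $\sigma\cdot(\zeta,g)=(\sigma(\zeta)c_\sigma^{g},g)$.
\begin{itemize}
\item It is a nonzero class, because $4\notin\Q^{\times 4}$.
\item Since $c_\sigma^2=1$, the lift of $2$ sending a generator of $\widehat{\Z}(1)$ to $(1,2)$ is Galois invariant. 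So the class lies in $\Br^e_C B\Z/4\Z$.
\end{itemize}
Thus ``$C$ generates $G$'' must be a hypothesis. The paper imposes it tacitly by invoking \cref{lem:Brauer_group_discriminant}, and it holds for $\Heis_{2^n}$. Your fallback of assuming it is the right fix, and with it your argument is complete.

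There is also a small imprecision, shared with the paper. The isomorphism $\widetilde G\cong G_\beta$ from \cref{lem:central_extensions_quadratic_forms} is determined only up to $\Hom(V,W\oplus\F_2)$. The map it induces on $G$ may be $g\mapsto g\,f(\bar g)$ for some $f\in\Hom(V,W)$, not the identity. You can fix this in either of two ways:
\begin{itemize}
\item modify the isomorphism by $f$; or
\item note that $\widetilde G\to G\times_V\bigl(\widetilde G/(W\oplus 0)\bigr)$ is an isomorphism of extensions of $G$, so $[\widetilde G]$ is the inflation of the extension of $V$ by $\F_2$ with quadratic form $Q_\beta$.
\end{itemize}
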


\begin{proof}
	Suppose that $Q_{\beta} \in \operatorname{Quad}(V)$ lies in the kernel of $\psi_G$.
    Then there exists a group-theoretic section $s: G \cong G_{\beta}/\F_2 \to G_{\beta}$.
    Restricting to $W$ defines a section $s|_W: W \to W \oplus \F_2$. The second coordinate is a linear map $\alpha$.
	
	Let $v \in V$ and $\widetilde{v} \in G$ be a lift.
    Then $s(\widetilde{v}) \in G_{\beta}$ is a lift of $v$ and we compute
    \[
    (Q_G,Q_{\beta})(v) =
    s(\widetilde{v})^2 = s(\widetilde{v}^2) = s(Q_G(v)) = (Q_{G}(v), (Q_G^* \alpha)(v))
    \]
    This shows that $Q_{\beta} = Q_G^* \alpha$, hence $\Psi_G$ is injective.
	
	We now show that $\Br^{e}_C BG$ is contained in the image of $\Psi_G$.
    By \cref{lem:Brauer_group_discriminant}, an element of $\Br^{e}_C BG$ can be represented by a central extension $0 \to \F_2 \to \widetilde{G} \xrightarrow{\pi} G \to 1$.
    Let $\widetilde{W} := \pi^{-1}(W)$.
    The elements in $W$ are $2$-torsion and central, so \cref{lem:Brauer_group_discriminant} implies that the elements in $\widetilde{W}$ are $2$-torsion and central.
	Thus, we have a central extension $1 \to \widetilde{W} \to \widetilde{G} \to V \to 1$.
    By \cref{lem:central_extensions_quadratic_forms}, this corresponds to a quadratic map $q_{\widetilde{G}}: V \to \widetilde{W}$. Moreover, one directly computes that $\pi \circ q_{\widetilde{G}} = q_G$. 
	
	The group $\widetilde{W}$ is an $\F_2$-vector space, so the homomorphism $\pi: \widetilde{W} \to W$ splits.
    Choosing a splitting $\widetilde{W} \cong W \oplus \F_2$, we find a quadratic form $Q_{\beta}: V \to \F_2$ such that $Q_{\widetilde{G}} = (Q_{G}, Q_{\beta})$.
    This finishes the lemma.
\end{proof}

We also determine the pre-image of $\Br^e_C BG$ under $\Psi_G$.

\begin{lemma}
    \label{lem:partially_ramified_Brauer_group}
	The pre-image of $\Br^e_C BG$ under the homomorphism $\Psi_G:\operatorname{Quad}(V)/W^{\vee} \to \Br^{e} BG$ is given by the set of quadratic forms $Q_{\beta}: V \to \F_2$ with the following properties:
	\begin{enumerate}
	\item For all $v \in V$ such that $Q_{G}(v) = 0$, we have that $Q_{\beta}(v) = 0$.
	\item For all $v, w \in V$ such that $Q_{G}(v) = 0$ and $Q_{G}(w) = Q_{G}(v + w)$, we have that $Q_{\beta}(w) = Q_{\beta}(w + v)$.
	\end{enumerate}
\end{lemma}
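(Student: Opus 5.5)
The plan is to combine \cref{lem:Brauer_group_discriminant} with the explicit description of $G_{\beta}$ from \cref{lem:central_extensions_quadratic_forms}. A class $\Psi_G(Q_\beta)$ lies in $\Br^e_C BG$ exactly when the central extension $0 \to \F_2 \to G_\beta \to G \to 1$ satisfies properties (1) and (2) of \cref{lem:Brauer_group_discriminant}. (Injectivity of $\Psi_G$ and well-definedness on the quotient by $W^\vee$ are already known, so it suffices to check these two properties for a representative $Q_\beta$.) I will translate each of them into the language of the quadratic map $(Q_G,Q_\beta): V \to W \oplus \F_2$. One point needs care here. Conditions (1) and (2) of the present lemma are unchanged when $Q_\beta$ is replaced by $Q_\beta + Q_G^*\alpha$. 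For (1) this holds because $Q_G(v)=0$. For (2), the difference is $\alpha(Q_G(w)) - \alpha(Q_G(w+v)) = 0$. So the description is consistent on the quotient.

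\emph{Property (1).} By \cref{lem:2-torsion elements}, the order-$2$ elements $g \in C$ are the nontrivial $g$ whose image $v\in V$ satisfies $Q_G(v)=0$. A preimage $\widetilde g \in G_\beta$ of such a $g$ squares to $(Q_G(v),Q_\beta(v)) = (0,Q_\beta(v))$, by the definition of the quadratic map of $G_\beta$ over $V$. Hence every preimage of $C$ has order $2$ iff $Q_\beta(v)=0$ whenever $Q_G(v)=0$ and $v$ is the image of some $g \in C$. One case needs separate treatment: $v=0$, which corresponds to $g \in W\setminus\{0\}$. In that case $Q_\beta(0)=0$ holds automatically. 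Since the elements of $W$ are central of order $2$, they all lie in $C$ whenever $W \neq 0$, so this case is harmless. Conversely, every $v$ with $Q_G(v)=0$ and $v\neq 0$ has a lift lying in $C$. This gives condition (1).

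\emph{Property (2).} Assuming (1), fix $g \in C$ with image $v$ and a lift $\widetilde g$. The preimage of the conjugacy class of $g$ splits into two classes iff $\widetilde g$ is never conjugate to $\widetilde g\cdot(0,1)$. The conjugates of $\widetilde g$ are $\widetilde h \widetilde g \widetilde h^{-1} = \widetilde g\,[\widetilde g^{-1},\widetilde h]$-type expressions. By \cref{lem:commutator_lifts} applied in $G_\beta$, the commutator of lifts of $v$ and $w$ equals
\[
\bigl(Q_G(v+w)-Q_G(v)-Q_G(w),\; Q_\beta(v+w)-Q_\beta(v)-Q_\beta(w)\bigr).
\]
Conjugation of $\widetilde g$ by a lift of $w$ therefore produces $\widetilde g\cdot(0,1)$ exactly when the $W$-component vanishes and the $\F_2$-component is $1$. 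Using $Q_G(v)=0$ and $Q_\beta(v)=0$ from (1), the $W$-component vanishes iff $Q_G(v+w)=Q_G(w)$, and the $\F_2$-component is then $Q_\beta(v+w)-Q_\beta(w)$. The case $v=0$ (central $g$) is trivially fine. Thus property (2) is equivalent to condition (2) of the lemma.

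The main obstacle is bookkeeping rather than depth. The commutator identity must be carried out honestly inside $G_\beta$, viewed as an extension of $V$ by $W\oplus\F_2$. The conjugacy-class splitting criterion must be phrased correctly: the preimage of a class either is one class or splits into two, and it splits iff no conjugate of $\widetilde g$ equals $\widetilde g$ times the generator of $\F_2$. Everything else is a direct substitution.
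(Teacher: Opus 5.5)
Your proposal is correct and follows the same route as the paper, which just calls the lemma immediate from the construction of $Q_G$ together with \cref{lem:Brauer_group_discriminant,lem:2-torsion elements}. You have written out how properties (1) and (2) translate via squares and commutators of lifts in $G_\beta$ (\cref{lem:commutator_lifts}), and added the useful check that both conditions are invariant under $Q_\beta \mapsto Q_\beta + Q_G^*\alpha$.

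One small clarification: the reduction step does not use injectivity of $\Psi_G$. It uses the fact, from the injectivity part of the proof of \cref{lem:Brauer_group_discriminant}, that any central $\F_2$-extension is recovered from its pushed-out Brauer class by taking $\Gamma_{\Q}$-invariants, so that $\Psi_G(Q_\beta)\in\Br^e_C BG$ forces the extension $G_\beta \to G$ itself to satisfy (1) and (2).
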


\begin{remark}
	The second property can be rephrased in terms of the alternating form $\langle v, w \rangle_{\bullet} := Q_{\bullet}(v + w) - Q_{\bullet}(v) - Q_{\bullet}(w)$ as follows: if $v, w \in V$ are such that $Q_{G}(v) = 0$ and $\langle v, w \rangle_{G} = 0$, then $\langle v, w \rangle_{\beta} = 0$.
\end{remark}
\begin{proof}
	This is immediate by the construction of $Q_{G}$ and \cref{lem:Brauer_group_discriminant,lem:2-torsion elements}.
\end{proof}

\section{The case of Heisenberg groups}
\label{sec:the case of Heisenberg groups}

\begin{notation}
    Let $n \in \Z_{\ge 1}$.
    For calculations, we often identify $\Heis_{2^n}$ with $\F_{2^n}^3$ via the coordinates $(x,y,z)$ appearing in \cref{def:Heisenberg group}, and note that the multiplication law takes the shape
    \[
    (x,y,z)(x',y',z') = (x+x',y+y',z+z'+xy').
    \]
    The multiplication $xy'$ is the field multiplication in $\F_{2^n}$.
\end{notation}

\begin{lemma}\label{lem:quadratic_map_Heis}
	We have a central extension $1 \to \F_{2^n} \to \Heis_{2^n} \to \F_{2^n} \oplus \F_{2^n} \to 1$ with corresponding quadratic form $Q_{\Heis_{2^n}}:\F_{2^n} \oplus \F_{2^n} \to \F_{2^n}$, $(x, y) \mapsto xy$. 
\end{lemma}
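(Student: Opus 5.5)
The plan is to verify the claim directly from the coordinate description of $\Heis_{2^n}$ in the Notation above, using the multiplication law $(x,y,z)(x',y',z') = (x+x',y+y',z+z'+xy')$, and then to compute squares of lifts in order to identify $Q_{\Heis_{2^n}}$ via \cref{lem:central_extensions_quadratic_forms}.

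First, I will define the maps. The inclusion is $\F_{2^n} \to \Heis_{2^n}$, $z \mapsto (0,0,z)$, and the projection is $\Heis_{2^n} \to \F_{2^n}\oplus\F_{2^n}$, $(x,y,z) \mapsto (x,y)$.

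From the multiplication law, the projection is a homomorphism onto the additive group $\F_{2^n}\oplus\F_{2^n}$, which is an $\F_2$-vector space. Its kernel is $\{(0,0,z)\}$, and the inclusion $z \mapsto (0,0,z)$ is a homomorphism since $(0,0,z)(0,0,z') = (0,0,z+z')$.

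To show that this kernel is central, I will compute
\[
(0,0,z)(x,y,w) = (x,y,z+w) = (x,y,w)(0,0,z),
\]
where the second equality holds because the correction term $xy'$ vanishes in both orders (in each product one of the relevant coordinates is $0$). This gives a central extension of the required shape with $V = \F_{2^n}\oplus\F_{2^n}$ and $W = \F_{2^n}$, both regarded as $\F_2$-vector spaces.

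Next, I will compute the quadratic map. A lift of $(x,y)$ is $(x,y,0)$, and
\[
(x,y,0)^2 = (2x, 2y, xy) = (0,0,xy)
\]
in characteristic $2$. By \cref{lem:central_extensions_quadratic_forms}, this gives $Q_{\Heis_{2^n}}(x,y) = xy$; that lemma also guarantees the answer is independent of the choice of lift. As a sanity check, $xy$ is visibly quadratic over $\F_2$, since its polarization $(x,y),(x',y') \mapsto xy' + x'y$ is $\F_2$-bilinear.

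There is no real obstacle here; the only care needed is to check centrality and to use characteristic $2$ in the squaring step.
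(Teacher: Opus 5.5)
Your proposal is correct and takes the same approach as the paper, which simply says the lemma follows from a direct computation. You carry that computation out explicitly: checking that $\{(0,0,z)\}$ is a central kernel, and that $(x,y,0)^2=(0,0,xy)$ in characteristic $2$, so \cref{lem:central_extensions_quadratic_forms} gives $Q_{\Heis_{2^n}}(x,y)=xy$.
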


\begin{proof}
	This follows from a direct computation.
\end{proof}

Let us determine the conjugacy classes of $\Heis_{2^n}$.
\begin{lemma}
\label{lem:conjugacy_classes}
	The group $\Heis_{2^n}$ has three types of non-trivial conjugacy classes.
	\begin{enumerate}
		\item It has $2^n - 1$ central conjugacy classes which contain $1$ element and are of the form $(0,0,z)$ for some $z \in \F_{2^n}^{\times}$.
		\item It has $2(2^n - 1)$ non-central conjugacy classes of order $2$; these have size $2^n$ and are of the form $\{(x, 0, z): z \in \F_{2^n}\}$ or $\{(0, x, z): z \in \F_{2^n}\}$ for some fixed element $x \in \F_{2^n}^{\times}$.
		\item It has $(2^n - 1)^2$ non-central conjugacy classes of order $4$. These have size $2^n$ and are of the form $\{(x, y, z): z \in \F_{2^n}\}$ for some fixed elements $x, y \in \F_{2^n}^{\times}$.
	\end{enumerate}
\end{lemma}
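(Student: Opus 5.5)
The plan is a direct computation with the coordinates $(x,y,z)$ and the multiplication law $(x,y,z)(x',y',z') = (x+x',y+y',z+z'+xy')$. I will first compute the inverse, $(x,y,z)^{-1} = (x,y,z+xy)$; characteristic $2$ makes all signs disappear. From this I get the conjugation formula
\[
(a,b,c)(x,y,z)(a,b,c)^{-1} = (x,y,z+ay+bx),
\]
where I again use characteristic $2$. I also record the square $(x,y,z)^2 = (0,0,xy)$, which is consistent with \cref{lem:quadratic_map_Heis} and \cref{lem:2-torsion elements}. So a nontrivial element has order $2$ exactly when $xy = 0$, and order $4$ otherwise.

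The conjugation formula shows that the class of $(x,y,z)$ is $\{(x,y,z+ay+bx) : a,b \in \F_{2^n}\}$. The map $(a,b) \mapsto ay+bx$ is $\F_2$-linear, and it is surjective onto $\F_{2^n}$ as soon as $(x,y) \ne (0,0)$. For instance, if $x \ne 0$, then $b \mapsto bx$ is already a bijection.

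I then split into the three cases.
\begin{enumerate}
\item If $(x,y) = (0,0)$, the element is central. This gives the $2^n-1$ singleton classes $(0,0,z)$ with $z \ne 0$.
\item If exactly one of $x,y$ is zero, the class is $\{(x,0,z) : z \in \F_{2^n}\}$ or $\{(0,y,z) : z \in \F_{2^n}\}$. Each has size $2^n$, and all its elements have order $2$ since $xy = 0$. There are $2(2^n-1)$ such classes.
\item If both $x,y$ are nonzero, the class is $\{(x,y,z) : z \in \F_{2^n}\}$, of size $2^n$, and its elements have order $4$. There are $(2^n-1)^2$ such classes.
\end{enumerate}
As a final check, the class sizes sum to $1 + (2^n-1) + 2^n\bigl(2(2^n-1) + (2^n-1)^2\bigr) = 8^n$.

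There is no real obstacle here. The only point requiring care is the surjectivity of $(a,b)\mapsto ay+bx$, which holds because multiplication by a nonzero field element is bijective.
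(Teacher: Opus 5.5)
Your proposal is correct and is essentially the paper's argument. The paper only says the lemma follows from a direct computation using \cref{lem:commutator_lifts}, and your conjugation formula $(a,b,c)(x,y,z)(a,b,c)^{-1} = (x,y,z+ay+bx)$ is exactly that commutator (the polarization of $Q(x,y)=xy$) worked out in coordinates; your order computation, class counts, and the check that the class sizes sum to $8^n$ are all right.
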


\begin{proof}
	This is well-known and follows from a direct computation using \cref{lem:commutator_lifts}.
\end{proof}

\begin{lemma}\label{lem:Brauer_group_bilinear_pairing}
    Let $C \subset \Heis_{2^n}$ be the subset of order $2$ elements.
	The subgroup of $\operatorname{Quad}(\F_{2^n} \oplus \F_{2^n})/\F_{2^n}^{\vee}$ corresponding to $\Br_{C}^e B\Heis_{2^n}$ is given by all quadratic maps of the form 
	\[
	Q_{b}: \F_{2^n} \oplus \F_{2^n} \to \F_2, \quad (x, y) \mapsto b(x, y),
	\]
	where $b$ is an $\F_2$-bilinear form $b: \F_{2^n} \times \F_{2^n} \to \F_2$. 
	
	In particular, $|\Br_{C}^e B\Heis_{2^n}| = 2^{n(n-1)}$.
\end{lemma}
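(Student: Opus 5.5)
We apply \cref{lem:partially_ramified_Brauer_group} with $V = \F_{2^n}\oplus\F_{2^n}$, $W = \F_{2^n}$ and $Q_G(x,y) = xy$ (\cref{lem:quadratic_map_Heis}). Write an arbitrary quadratic form on $V$ as $Q_\beta(x,y) = q_1(x) + q_2(y) + b(x,y)$, where $q_1,q_2$ are quadratic forms on $\F_{2^n}$ and $b$ is $\F_2$-bilinear on $\F_{2^n}\times\F_{2^n}$. This decomposition is unique, since a quadratic form is determined by its restrictions to the two summands together with the cross term of its polar form.

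The zero locus of $Q_G$ is $\{(x,0)\}\cup\{(0,y)\}$. Condition (1) of \cref{lem:partially_ramified_Brauer_group} therefore says exactly $q_1 = q_2 = 0$, so $Q_\beta = b$. It remains to check that condition (2) holds automatically for such $Q_b$. By the remark after \cref{lem:partially_ramified_Brauer_group}, we must show that if $v$ is isotropic for $Q_G$ and $\langle v,w\rangle_G = 0$, then $\langle v,w\rangle_b = 0$. Take $v=(x,0)$ with $x\neq 0$ and $w=(x',y')$. Then $\langle v,w\rangle_G = xy'$, which vanishes only if $y'=0$. In that case $\langle v,w\rangle_b = b(x,y') + b(x',0) = 0$. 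The case $v=(0,y)$ is symmetric. So the preimage of $\Br_C^e B\Heis_{2^n}$ in $\operatorname{Quad}(V)$, i.e.\ before quotienting by $W^\vee$, is exactly the space of forms $Q_b$.

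For the count, we need $Q_G^*W^\vee$ to lie inside this space and to meet it with the right dimension. For $\alpha\in\F_{2^n}^\vee$, the form $\alpha\circ Q_G$ is $(x,y)\mapsto \alpha(xy)$, which is the bilinear form $b_\alpha(x,y)=\alpha(xy)$. The map $\alpha\mapsto b_\alpha$ is injective because multiplication is surjective. Hence the subgroup is the space of bilinear forms, of dimension $n^2$, modulo an $n$-dimensional subspace. Since $\Psi_G$ is injective by the preceding lemma, $|\Br_C^e B\Heis_{2^n}| = 2^{n^2-n}$.

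The only step needing care is the uniqueness of the decomposition of $Q_\beta$ and the verification that condition (2) imposes nothing beyond condition (1). Both reduce to the explicit description of the isotropic vectors of $Q_G$ above, so I expect no real obstacle.
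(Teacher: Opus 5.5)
Your proposal is correct and follows essentially the same route as the paper: both apply \cref{lem:partially_ramified_Brauer_group}, observe that the isotropic vectors of $Q_G(x,y)=xy$ are exactly the two coordinate axes, use condition (1) to force $Q$ to vanish on the axes (so that $Q(x,y)=Q(x,y)-Q(x,0)-Q(0,y)$ is bilinear), and note that condition (2) imposes nothing further. Your explicit check that $\alpha\mapsto\alpha\circ Q_G$ is injective with image inside the space of the $Q_b$, which yields $|\Br_C^e B\Heis_{2^n}|=2^{n^2-n}$, fills in a step the paper leaves implicit.
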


\begin{proof}
	By \cref{lem:partially_ramified_Brauer_group,lem:2-torsion elements,lem:commutator_lifts}, the relevant quadratic forms $Q$ are exactly the ones with the following properties:
	\begin{enumerate}
		\item $Q(x,0) = Q(0,y)$ for all $x, y \in \F_{2^n}$;
		\item For all $x,y,u \in \F_{2^n}$ such that $xy= (x + u)y$ (respectively $xy = x(y + u)$), we have that $Q(x,y) = Q(x + u, y)$ (respectively $Q(x,y) = Q(x, y + u)$).
	\end{enumerate}
	
	Note that the second condition is actually implied by the first condition, since $xy= (x + u)y$ (respectively $xy = x(y + u)$) holds only if $u = 0$ or $y = 0$ (respectively $x = 0$).
    The lemma then follows from a direct computation after choosing an $\F_2$-basis of $\F_{2^n}$.
	
	One can directly check that a quadratic form given by $Q_{b}$ above satisfies the first (and hence second) condition.
    Conversely, if a quadratic form $Q$ satisfies the first condition, then the polarization
    \[
    b_Q(x,y) := Q(x,y) - Q(x, 0) - Q(0, y)
    \]
    satisfies $b_Q(x,y) = Q(x,y)$.
    It follows by definition that $Q = Q_{b_Q}$, finishing the proof.
\end{proof}

\subsection{Residues}
We now compute the \textit{residues} of the Brauer elements we determined in \cref{lem:Brauer_group_bilinear_pairing}. 
For the definition of the residue, see \cite[Def.\ 6.14]{LoughranSantensSurvey}.

We use the classification of conjugacy classes from \cref{lem:conjugacy_classes}.
Note that all conjugacy classes are fixed under invertible powers, and can thus be canonically identified with conjugacy classes of $G(-1)$.

For $G = \Heis_{2^n}$, let $C \subset G$ be the subset of order $2$ elements.
The residues at the order $2$ conjugacy classes are trivial by tautology: $\beta \in \Br^e B\Heis_{2^n}$ belongs to $\Br_{C}^{e} B\Heis_{2^n}$ if and only if the residues of $\beta$ along $C$ are trivial \cite[Lem.~6.15]{LoughranSantensSurvey}.
Thus, it only remains to compute the residues at the third type of conjugacy class.

\begin{lemma}\label{lem:residue_computation}
	Let $n \in \Z_{\ge 1}$, $G = \Heis_{2^n}$, and $C \subset G$ be the subset of order $2$ elements.
    Suppose $\beta \in \Br_C^e B\Heis_{2^n}$ corresponds to an $\F_2$-bilinear form $b:\F_{2^n} \times \F_{2^n} \to \F_2$.
    Given $x,y \in \F_{2^n}^{\times}$, set $c := \{(x,y,z): z \in \F_{2^n} \}$.
    The residue of $\beta$ at $c$ is algebraic if and only if $b(\lambda x,y) = b(x, \lambda y)$ for all $\lambda \in \F_{2^n}$.
    Moreover, every algebraic residue is trivial.
\end{lemma}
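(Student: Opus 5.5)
We would first make the residue concrete using the description of \cite[Def.\ 6.14]{LoughranSantensSurvey} in terms of central extensions. For a Brauer element $\beta$ given by a central extension $1 \to \mu_\infty \to G_\beta \to G \to 1$, the residue at a conjugacy class $c$ of $G(-1)$ measures the failure of a lift of $g \in c$ to be conjugated to a marked lift. Concretely, fix $g \in c$ and a lift $\widetilde g \in G_\beta$. The centralizer $Z_G(g)$ acts on the fibre over $g$ by conjugation, and this gives a character $Z_G(g) \to \mu_\infty$, $h \mapsto [\widetilde h, \widetilde g]$. There is also the Galois-twisting datum coming from the power map on $\widehat{\Z}(1)$, which is governed by $\widetilde g^{\,\mathrm{ord}}$. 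The residue is algebraic exactly when the commutator part is trivial, i.e.\ when $\widetilde g$ is central in the preimage of $Z_G(g)$. In that case the remaining part lies in $\operatorname{H}^1$ of the cyclotomic character with coefficients in $\mu_\infty$.

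For $\beta \in \Br_C^e B\Heis_{2^n}$ we use the model $G_\beta$ with quadratic map $(Q_G,Q_b)$, $Q_b(x,y)=b(x,y)$, pushed out along $\F_2 \to \mu_\infty$. Take $g=(x,y,z)$ with $x,y\neq 0$. By \cref{lem:commutator_lifts}, $h=(u,v,w)$ commutes with $g$ iff $uy+xv=0$, i.e.\ $(u,v)=(\lambda x,\lambda y)$ for some $\lambda\in\F_{2^n}$, with $w$ arbitrary. The lifted commutator in $G_\beta$ has $\F_2$-component $\langle (x,y),(u,v)\rangle_b = b(x,v)+b(u,y)$. For $h$ in the centralizer this equals $b(x,\lambda y)+b(\lambda x,y)$, and central elements $w$ contribute nothing. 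The commutator character on $Z_G(g)$ is therefore $\lambda \mapsto b(x,\lambda y)-b(\lambda x,y)$. It vanishes identically iff $b(\lambda x,y)=b(x,\lambda y)$ for all $\lambda$, which gives the first claim.

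For the second claim, assume this symmetry holds. We must show the algebraic part vanishes. The element $g$ has order $4$ with $g^2=(0,0,xy)$. Any lift $\widetilde g$ has $\widetilde g^{\,4}$ equal to the square of the lift of $(0,0,xy)$. That central element is a $2$-torsion element in $W\oplus\F_2$ with trivial $\F_2$-part, because the extension restricted to $W$ is split by construction. Hence $\widetilde g$ has order $4$ in the pushout, just like $g$.

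The algebraic residue is then the class in $\operatorname{H}^1(\Q,\mu_\infty)$, or in the appropriate cyclotomic-twisted group, recording how the Galois action on $\widehat{\Z}(1)$, through powers $\widetilde g^{\,a}$ with $a$ odd, moves the lift of $c$. Since $c$ is stable under $g\mapsto g^{-1}=g\cdot(0,0,xy)$, we check that $\widetilde g^{-1}$ is conjugate to $\widetilde g^{\,3}$ within the lifted class. Equivalently, we produce an $h$ with $h\widetilde g h^{-1}=\widetilde g^{\,3}$ in $G_\beta$ with no extra $\mu_2$ factor. Because $g^3=g\cdot(0,0,xy)$ and $c$ contains $(x,y,z+xy)$, this amounts to an element $h$ with $[h,\widetilde g]$ equal to the lift of $(0,0,xy)$ having trivial $\F_2$-component. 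Such $h=(u,v,w)$ needs $uy+xv=xy$, for instance $(u,v)=(x,0)$, whose $b$-commutator is $b(x,y)$; alternatively $(u,v)=(0,y)$, giving $b(x,y)$ as well. To fix the sign we may add the square-class correction $Q_b$ on $(0,0,xy)$, which is $0$.

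This last bookkeeping step is the main obstacle. It requires matching exactly the normalization of the residue map (the Galois action on $\mu_\infty(-1)\cong\Q/\Z$ versus the $\F_2$-component of commutators) and confirming that the resulting class in $\operatorname{H}^1$ is zero rather than the nontrivial element. If the direct conjugacy approach is awkward, we would instead use that an algebraic residue factors through the abelianization and invoke \cref{lem:vanishing of algebraic Brauer groups in many cases}-style fairness: order-$4$ elements of $\Heis_{2^n}^{\rm ab}$ do not exist, so the cyclotomic twist acts trivially and the residue is forced to be $2$-torsion and killed by the class of $\widetilde g$ having the same order as $g$.
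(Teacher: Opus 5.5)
\textbf{There is a genuine gap in your second claim.} Your argument for the first claim is fine and is essentially the paper's. Algebraicity means every lift of an element of $Z_G(g)$ commutes with $\widetilde g$, and the commutator character $\lambda\mapsto b(x,\lambda y)+b(\lambda x,y)$ on $Z_V(g)=\{(\lambda x,\lambda y)\}$ gives the stated criterion.

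The second claim breaks at exactly the step you flag. You correctly reduce to finding $h$ with $h\widetilde g h^{-1}=\widetilde g^{3}$, i.e.\ $[h,\widetilde g]=\widetilde g^{2}$. You then take the target to be the lift of $(0,0,xy)$ with trivial $\F_2$-component, and that is wrong. $G_\beta$ is defined by the quadratic map $(Q_G,Q_b)$, so $\widetilde g^{2}=(Q_G(x,y),Q_b(x,y))=(xy,b(x,y))\in W\oplus\F_2$. The splitting of $W\oplus\F_2$ does not tell you which of the two lifts of $(0,0,xy)$ equals $\widetilde g^{2}$.

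This error changes the answer. All solutions of $uy+xv=xy$ are $(u,v)=((1+\lambda)x,\lambda y)$. The lifted commutator then has $\F_2$-component $b(x,y)+b(x,\lambda y)+b(\lambda x,y)$, which under the algebraicity hypothesis equals $b(x,y)$ for every $\lambda$. So with your target you would conclude that the residue is nontrivial whenever $b(x,y)=1$. That would happen even for $b(x,y)=\ell(xy)$ with $\ell$ a linear functional, and such a $b$ represents the trivial Brauer class. Your ``square-class correction $Q_b$ on $(0,0,xy)$, which is $0$'' does not repair this: the relevant correction is $Q_b$ evaluated at the image $(x,y)$ of $g$ in $V$, namely $b(x,y)$.

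Once the target is corrected to $(xy,b(x,y))$, your choice of $h$ lifting $(x,0)$ has commutator $(xy,b(x,y)+b(x,0))=\widetilde g^{2}$, and the argument closes. The paper instead parametrizes all solutions as $((1+\lambda)x,\lambda y)$ and reduces the $\F_2$-equation to $b(x,\lambda y)=b(\lambda x,y)$.

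Your fallback does not substitute for this computation. \cref{lem:vanishing of algebraic Brauer groups in many cases} is about $\Br^e_{1,C}$, not about residues of a (typically transcendental) $\beta$ at the order-$4$ classes. Nothing about $\Heis_{2^n}^{\rm ab}$ controls these residues. Nor can ``$\widetilde g$ has the same order as $g$'' decide triviality, since both lifts of $g$ to the $\F_2$-extension $G_\beta$ have order $4$. What matters is which of the two lifts of $g^{-1}$ is conjugate to $\widetilde g$, and that is precisely the computation above.
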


\begin{proof}
	Let $\widetilde{g} \in G_{\beta}$ be a lift of $(x,y,0) \in c$, and $\widetilde{c} \subset G_{\beta}$ denote the conjugacy class of $\widetilde{g}$.
    The residue of $\beta$ at $c$ is algebraic if and only if the map $\widetilde{c} \to c$ is a bijection.
    By the orbit stabilizer theorem, $\widetilde{c} \to c$ is a bijection if and only if the map of centralizers $\pi : Z(\widetilde{g}) \to Z((x,y,0))$ is a surjection (as its kernel always contains the central $\F_2$ appearing in $0 \to \F_2 \to G_{\beta} \to G \to 1$).
	
    Notice that $\pi$ maps $W\oplus\F_2$ onto $W$, therefore $\pi$ is surjective if and only if the $\F_2$-linear map $\pi_V : Z_V(\widetilde{g}) \to Z_V((x,y,0))$ is surjective, where $Z_V(\widetilde{g}) \le V$ and $Z_V((x,y,0)) \le V$ denote the images of $Z(\widetilde{g}), Z((x,y,0))$ modulo $W\oplus \F_2, W$ respectively.
    A direct computation using \cref{lem:commutator_lifts} shows that 
	\begin{equation*}
		\begin{split}
		&Z_V((x,y,0)) = \{(\lambda x, \lambda y) \in \F_{2^n} \oplus \F_{2^n}: \lambda \in \F_{2^n}\}, \\
		&Z_V(\widetilde{g}) = \{(\lambda x, \lambda y) \in Z_V((x,y,0)): b(\lambda x,y) = b(x, \lambda y) \}.
		\end{split}
	\end{equation*}
	The first claim of the lemma follows.
	
    It remains to show that every algebraic residue is trivial.
    The residue of $\beta$ at $c$ is trivial if and only if $\widetilde{g}$ is conjugate to $\widetilde{g}^{-1} = \widetilde{g}^3$.
	By a direct computation using \cref{lem:commutator_lifts}, the element $\widetilde{g}$ is conjugate to $\widetilde{g}^3$ if and only if there exists $(u,v) \in \F_{2^n} \times \F_{2^n}$ such that $(xy, b(x,y)) \in \F_{2^n} \times \F_2$ is equal to  
	\[
		((x + u)(y + v) - xy - uv, b(x + u, y + v) - b(x,y) - b(u,v)) = (xv + yu, b(x,v) + b(u,y)).
	\]
    The solution set of the first equation $xy = xv + yu$ is precisely $(u,v) = ((1 + \lambda)x, \lambda y)$ for all $\lambda \in \F_{2^n}$, and the second equation is now expressible as
	\begin{equation*}
		b(x,y) = b(x,v) + b(u,y)  = b(x, \lambda y) + b(x,y) + b(\lambda x,y).
	\end{equation*}
    Thus, $\beta$ has trivial residue at $c$ if and only if the above equation has a solution; however, the existence of a solution follows from $\beta$ having an algebraic residue at $c$.
    Therefore, every algebraic residue is trivial.
\end{proof}

\begin{corollary}\label{cor:unramified_Brauer_group}
	The unramified Brauer group $\Br_{\mathrm{un}}^e B\Heis_{2^n}$ is trivial.
\end{corollary}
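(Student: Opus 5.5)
The plan is to reduce to \cref{lem:Brauer_group_bilinear_pairing} and \cref{lem:residue_computation}. Here $C$ denotes the set of order $2$ elements of $\Heis_{2^n}$. The unramified Brauer group consists of those $\beta \in \Br^e B\Heis_{2^n}$ whose residues at every nontrivial conjugacy class vanish, so it is contained in $\Br^e_C B\Heis_{2^n}$. By \cref{lem:Brauer_group_bilinear_pairing}, any such $\beta$ is represented by $Q_b(x,y) = b(x,y)$ for some $\F_2$-bilinear form $b$ on $\F_{2^n}$. The classes of order $2$ elements (types (1) and (2) of \cref{lem:conjugacy_classes}) impose no further conditions, since residues there vanish by definition of $\Br^e_C$. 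The remaining task is to show that trivial residues at all classes of type (3) force $\beta = 0$.

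Next, fix $x,y \in \F_{2^n}^\times$. A trivial residue is in particular algebraic, so \cref{lem:residue_computation} gives $b(\lambda x, y) = b(x,\lambda y)$ for all $\lambda$. Because $x,y$ range over all nonzero elements, this says $b(\lambda x, y) = b(x, \lambda y)$ for all $\lambda, x, y$ (the cases $x=0$ or $y=0$ are trivial). Thus $b$ is \emph{$\F_{2^n}$-balanced}: $b(x,y) = \ell(xy)$ for the $\F_2$-linear functional $\ell(t) := b(t,1)$, since $b(x,y) = b(xy\cdot 1, 1)$ after taking $\lambda = y$ and replacing $(x,y)$ by $(x,1)$. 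Conversely, the last sentence of \cref{lem:residue_computation} guarantees that such residues are trivial, though this direction is not needed here.

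Finally, $Q_b = \ell \circ Q_{\Heis_{2^n}}$ by \cref{lem:quadratic_map_Heis}, so $Q_b$ lies in $Q_G^* W^\vee$ with $W = \F_{2^n}$. It is therefore zero in $\operatorname{Quad}(V)/W^\vee$, and by injectivity of $\Psi_G$ we get $\beta = 0$. The one point to check, which I do not expect to be a real obstacle, is that the unramified condition really reduces to residues at the $G(-1)$-conjugacy classes listed in \cref{lem:conjugacy_classes}. This is justified by the remark before \cref{lem:residue_computation}: all classes are fixed under invertible powers, so they coincide with the classes in $G(-1)$. As a consistency check, the argument also identifies the type (3) ramification as exactly what distinguishes the nonzero elements of $\Br^e_C$, matching the count $2^{n(n-1)} = 2^{n^2}/2^n$.
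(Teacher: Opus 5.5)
Your proposal is correct and follows essentially the paper's own argument: you place $\beta \in \Br^e_{\mathrm{un}} B\Heis_{2^n} \subset \Br^e_C B\Heis_{2^n}$ and represent it by a bilinear form $b$, use \cref{lem:residue_computation} to force $b(\lambda x,y)=b(x,\lambda y)$ for all $\lambda,x,y$, deduce $b(x,y)=\ell(xy)$ for a linear functional $\ell$, and conclude $\beta=0$ because $Q_b$ lies in $Q_G^*W^\vee$. The only cosmetic difference is that you take $\ell(t)=b(t,1)$ where the paper uses $b(1,t)$.
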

\begin{proof}
	Let $C \subset \Heis_{2^n}$ be the subset of order $2$ elements.
    Choose $\beta \in \Br_{\mathrm{un}}^e B\Heis_{2^n} \subset \Br_{C}^e B\Heis_{2^n}$ with corresponding $\F_2$-bilinear form $b$.
    Then all residues of $\beta$ are trivial by \cite[Lem.~6.15]{LoughranSantensSurvey} (or \cite[Lem.~5.26]{LoughranSantens2024}).
    By \cref{lem:residue_computation}, this occurs if and only if $b(\lambda x,y) = b(x, \lambda y)$ for all $x,y, \lambda \in \F_{2^n}$.
    In particular, $b(x,y) = b(1, xy) = b(1, Q_{\Heis_{2^n}}(x,y))$ by \cref{lem:quadratic_map_Heis}.
    The map $\F_{2^n} \to \F_2, x \mapsto b(1,x)$ is linear, so the corresponding quadratic map is contained in the kernel of the map $\Psi_{\Heis_{2^n}}$, i.e. the corresponding Brauer element is trivial.
\end{proof}

\subsection{Thin subsets}

We now argue why the predictions of \cite{LoughranSantens2024,LoughranSantensSurvey} for Malle's conjecture for the permutation group $\Heis_4$ do not require us to remove a thin subset of field extensions in \cref{conj:explicit Heis_4 conjecture}.
It is expected that one has to remove the thin subset of field extensions arising from surjective \emph{breaking cocycles}; see \cite[Rem.~7.4]{LoughranSantensSurvey} and \cite[Lem.~3.38]{LoughranSantens2024}.

Let us recall the definition \cite[Def.~3.36]{LoughranSantens2024}, specialized to our setting.

\begin{definition}
	Let $G$ be a finite group and $C \subset G(-1)$ a Galois and conjugacy invariant subset. A homomorphism (i.e.\ cocycle) $\varphi: \Gamma_\Q \to G$ is $C$-\emph{breaking} if there exists a Galois-and-conjugacy orbit $c \subset C$ such that the $\Gamma_{\Q}$-action on $c \subset G(-1)_{\varphi}$ is not transitive. Here $G_{\varphi}(-1) = G(-1)_{\varphi}$ is the \textit{$\varphi$-inner twist} of $G(-1)$ \cite[Def.\ 2.8]{LoughranSantens2024}.
\end{definition}

Note that for $\Heis_4$, and more generally any regular permutation group $G$, the minimal index conjugacy classes are of order $p$, where $p$ is the smallest prime dividing $|G|$.
The following proposition therefore implies that, when ordering Galois $G$-number fields by discriminant, one should not have to remove a thin subset of fields in order to obtain the predicted leading constant in Malle's conjecture (see \cite[Lem.~3.38]{LoughranSantens2024}).
\begin{proposition}
    \label{prop:thin sets do not contribute}
	Let $G$ be a finite group and $C \subset G$ the subset of order $p$ elements, where $p$ is the smallest prime dividing $|G|$.
    Then all $C$-breaking cocycles are non-surjective.
\end{proposition}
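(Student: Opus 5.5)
The plan is to make the $\varphi$-twisted Galois action on $C \subset G(-1)$ explicit, reduce transitivity to the surjectivity of a single joint homomorphism, and obtain that surjectivity from a coprimality argument via Goursat's lemma. Throughout, $\varphi : \Gamma_{\Q} \to G$ is a surjective homomorphism, and I will show it is not $C$-breaking.

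\textbf{Step 1: describe the orbits.} Since $G$ carries the trivial Galois action, an element $h \in G(-1) = \Hom(\widehat{\Z}(1),G)$ whose image has order $p$ factors through $\mu_p$. The untwisted Galois action is then $(\sigma.h)(\zeta) = h(\sigma^{-1}\zeta) = h(\zeta)^{\chi(\sigma)^{-1}}$, where $\chi : \Gamma_{\Q} \to (\Z/p\Z)^{\times}$ is the mod $p$ cyclotomic character. By \cite[Def.\ 2.8]{LoughranSantens2024}, the $\varphi$-inner twist acts by
\[
\sigma \ast h = \varphi(\sigma)\, h^{\chi(\sigma)^{-1}}\, \varphi(\sigma)^{-1}.
\]
So both the twisted Galois action and the original Galois-and-conjugacy action on $C$ come from the single action of $G \times (\Z/p\Z)^{\times}$ given by $(g,a)\cdot h = g h^{a^{-1}} g^{-1}$. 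The Galois-and-conjugacy orbits $c \subset C$ are precisely the orbits of this action, because $\chi$ is surjective onto $(\Z/p\Z)^{\times}$ by irreducibility of cyclotomic polynomials. The twisted $\Gamma_{\Q}$-action on $c$ is the restriction of this action along the homomorphism
\[
(\varphi,\chi) : \Gamma_{\Q} \to G \times (\Z/p\Z)^{\times}.
\]

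\textbf{Step 2: reduce to surjectivity of $(\varphi,\chi)$.} If $(\varphi,\chi)$ is surjective, then every orbit $c$ of $G \times (\Z/p\Z)^{\times}$ is a single $\Gamma_{\Q}$-orbit under the twisted action. Hence $\varphi$ is not $C$-breaking, which is the claim.

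\textbf{Step 3: prove surjectivity (the main point).} Let $H$ be the image of $(\varphi,\chi)$. It projects onto $G$ because $\varphi$ is surjective, and onto $(\Z/p\Z)^{\times}$ because $\chi$ is surjective. By Goursat's lemma, $H$ is the graph of an isomorphism $G/N_1 \cong (\Z/p\Z)^{\times}/N_2$ for suitable normal subgroups $N_1, N_2$. Every prime dividing $p-1$ is smaller than $p$, and $p$ is the smallest prime dividing $|G|$, so $\gcd(|G|,p-1)=1$. Hence this common quotient is trivial and $H = G \times (\Z/p\Z)^{\times}$.

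I do not expect a serious obstacle. The only care needed is in Step 1: the sign and inverse conventions in the twisted action, and checking that the Galois-and-conjugacy orbits really are orbits of the full product $G \times (\Z/p\Z)^{\times}$, which uses the surjectivity of $\chi$ over $\Q$. (When $p=2$ everything is immediate, since $(\Z/2\Z)^{\times}$ is trivial and the twisted action is just conjugation through $\varphi$.)
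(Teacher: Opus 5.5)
Your proposal is correct and follows the paper's proof: you write the $\varphi$-twisted action as $\sigma \ast h = \varphi(\sigma)\, h^{\chi(\sigma)^{-1}} \varphi(\sigma)^{-1}$, note that it factors through $(\varphi,\chi)\colon \Gamma_{\Q} \to G \times (\Z/p\Z)^{\times}$, and deduce that this map is surjective from $\gcd(|G|,p-1)=1$. Your Goursat step spells out the coprimality argument that the paper states in one line; strictly, $H$ is the preimage of a graph rather than a graph, but this does not affect your conclusion.
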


\begin{proof}	
	We may assume without loss of generality that $C$ consists of a single Galois-and-conjugacy orbit. Let $\varphi: \Gamma_{\Q} \to G$ be a homomorphism.
	
	Let us make the Galois action $\Gamma_{\Q} \actson G_{\varphi}(-1)$ more concrete.
    Recall \cite[Lem.~4.4]{LoughranSantensSurvey} that after choosing a primitive root of unity, we may identify $G_{\varphi}(-1)$ with the Galois set whose underlying set is $G$ and whose Galois action is given by $(\sigma, g) \mapsto \varphi(\sigma) g^{\chi_{\mathrm{cycl}}(\sigma)^{-1}} \varphi(\sigma)^{-1}$, where $\chi_{\rm cycl} : \Gamma_{\Q} \to \widehat{\Z}^{\times}$ is the cyclotomic character.

    Notice that the above Galois action on $C$ factors through the map
    \[
    \Gamma_{\Q} \xrightarrow{(\varphi,\chi_{\rm cycl})} G \times (\Z/p\Z)^{\times}.
    \]
    We now suppose that $\varphi$ is surjective.
	Since both $\varphi$ and $\chi_{\rm cycl}$ are surjective onto their respective factors of $G \times (\Z/p\Z)^{\times}$, we deduce that the map $(\varphi, \chi_{\mathrm{cycl}})$ is surjective because $|G|$ and $|(\Z/p \Z)^{\times}| = p - 1$ are coprime by the minimality of $p$.
    This implies that the Galois action on $C \subset G_{\varphi}(-1)$ is transitive, therefore $\varphi$ is not $C$-breaking.
\end{proof}

\subsection{Brauer transforms}

\begin{lemma}
\label{lem:twisted_mass_series}
    Let $n \in \Z_{\ge 1}$.
    For every prime $p>2$ and Brauer class $\beta \in \Br_C^e B\Heis_{2^n}$ corresponding to an $\F_2$-bilinear form $b : \F_{2^n} \times \F_{2^n} \to \F_{2}$, where $C \subset \Heis_{2^n}$ is the subset of order $2$ elements, one has
    \begin{align*}
    \frac{1}{|\Heis_{2^n}|} \sum_{\varphi \in \Hom(\Gamma_{\Q_p},\Heis_{2^n})}
    \frac{e^{2\pi i \langle \beta,\varphi\rangle_p}}{|\disc(\varphi)|^{s}} &=
    1 + 3(2^n-1) p^{-2^{3n-1}s} + N_{b} p^{-3\cdot 2^{3n-2}s},
    \end{align*}
    where
    \[
    N_{b} := \#\{
    (x,y) \in \F_{2^n}^\times \times \F_{2^n}^\times :
    \forall \lambda \in \F_{2^n}, \ b(\lambda x,y) + b(x,\lambda y) = 0
    \}.
    \]
\end{lemma}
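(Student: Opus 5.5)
Since $p>2$ and $|\Heis_{2^n}|$ is a power of $2$, every $\varphi:\Gamma_{\Q_p}\to \Heis_{2^n}$ is tamely ramified. It therefore factors through the tame quotient $\langle \sigma,\tau \mid \sigma\tau\sigma^{-1}=\tau^p\rangle$, and is determined by a pair $(F,T)=(\varphi(\sigma),\varphi(\tau))$ with $FTF^{-1}=T^p$. Because $T$ has order dividing $4$, we have $T^p=T^{\pm1}$. I would sum over $T$ grouped by conjugacy class (\cref{lem:conjugacy_classes}).

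\textbf{Discriminant exponents.} For the regular representation, the discriminant exponent is $|G|(1-1/\mathrm{ord}(T))$. This gives exponent $0$ for $T=1$, exponent $2^{3n}/2=2^{3n-1}$ for $T$ of order $2$, and exponent $2^{3n}\cdot\frac34=3\cdot 2^{3n-2}$ for $T$ of order $4$.

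\textbf{Counting the pairs $(F,T)$.} For fixed $T$, the number of admissible $F$ is $|Z(T)|$: every order-$4$ element is conjugate to its inverse, since the class $\{(x,y,z)\}$ is closed under inversion. Summing over the conjugacy class of $T$ then contributes $|G|$ per conjugacy class. So before twisting, the normalized count is $1+(\#\text{order-}2\text{ classes})p^{-2^{3n-1}s}+(\#\text{order-}4\text{ classes})p^{-3\cdot 2^{3n-2}s}$. The term $T=1$ gives the $1$, since unramified classes carry trivial invariant for $\beta$ with trivial residue there. The order-$2$ classes number $(2^n-1)+2(2^n-1)=3(2^n-1)$.

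\textbf{Brauer twist.} For order-$2$ classes the residue of $\beta$ is trivial, as $\beta\in\Br^e_C$. By the standard local formula in \cite{LoughranSantensSurvey}, the invariant of $\beta(\varphi)$ for tame $\varphi$ is then governed by the residue at the class of $T$ evaluated at $F$. Trivial residue makes the pairing vanish, so these classes contribute with weight $1$.

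For an order-$4$ class $c=\{(x,y,z)\}$, I would use the description from \cref{lem:residue_computation}. Fix $T$ and lift to $G_\beta$. For each admissible $F$, the relation $F\widetilde T F^{-1}=\widetilde T^{\pm1}\epsilon$ defines a sign $\epsilon(F)\in\F_2$, and the invariant is $\epsilon(F)/2$. Thus the contribution of $c$ is $\frac{1}{|Z(T)|}\sum_F(-1)^{\epsilon(F)}$, which I expect to be the average of a character.

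\textbf{Main obstacle: the order-$4$ character sum.} The hardest step is to make this precise. The sign-valued function $\epsilon$ should restrict to a homomorphism on the centralizer of $T$. The set of $F$ with $FTF^{-1}=T^{p}$ is a coset of $Z(T)$ (or $Z(T)$ itself). On $Z(T)$, the sign is exactly the obstruction computed in \cref{lem:residue_computation}: the map $Z(\widetilde g)\to Z(g)$ fails to be surjective precisely when some $\lambda$ has $b(\lambda x,y)\neq b(x,\lambda y)$. So $\epsilon|_{Z(T)}$ is a nontrivial character in that case, and the sum vanishes.

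When instead $b(\lambda x,y)=b(x,\lambda y)$ for all $\lambda$, the residue is algebraic, hence trivial by \cref{lem:residue_computation}. Then $\epsilon$ is identically $0$, including on the non-identity coset when $p\equiv 3\bmod 4$: there triviality of the residue means $\widetilde T$ is conjugate to $\widetilde T^{-1}$, so a sign-free choice exists on the coset. I would carefully check this coset case, since the residue being trivial is what ensures the sign is constant there. Hence the order-$4$ classes contribute exactly $N_b\,p^{-3\cdot2^{3n-2}s}$, which gives the stated formula.
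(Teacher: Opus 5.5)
Your proof is correct. It differs from the paper's, which is a one-line appeal to the Brauer-twisted mass formula of Loughran--Santens (their Thm.~8.23) plus the residue computation of \cref{lem:residue_computation}. That formula already packages the local transform at a tame prime as a sum over conjugacy classes of $p^{-\mathrm{ind}(c)s}$, weighted by zero when the residue at $c$ is non-algebraic and by the residue evaluated at Frobenius otherwise.

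You re-derive that formula by hand in this case. You parametrize $\varphi$ by tame pairs $(F,T)$ and identify $\operatorname{inv}_p\beta(\varphi)$ with the lifting obstruction $\epsilon(F,T)=\widetilde F\widetilde T\widetilde F^{-1}\widetilde T^{-p}\in\F_2$. You then note that $\epsilon$ is a character on $Z(T)$ and is affine on the Frobenius coset.

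To make this airtight, state the routine facts you use implicitly:
\begin{enumerate}
\item $\epsilon$ is independent of the chosen lifts because $p$ is odd.
\item $H^2(\Q_p,\mu_2)\to\Br\,\Q_p$ is injective, and any lift to the $2$-group $G_\beta$ is automatically tame. Hence $\epsilon=0$ exactly when $\operatorname{inv}_p\beta(\varphi)=0$.
\item $\epsilon(FF')=\epsilon(F)+\epsilon(F')$ for $F'\in Z(T)$.
\end{enumerate}

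Two steps can be made more direct. For order-$2$ classes you need no survey formula: properties (1) and (2) of \cref{lem:Brauer_group_discriminant} give $\widetilde T^2=1$ and $\widetilde F\widetilde T\widetilde F^{-1}\neq\widetilde T w$ directly. For the $p\equiv 3 \bmod 4$ coset you can bypass the step ``algebraic residue implies trivial''. Take $F_0=(x,0,0)$, which satisfies $F_0TF_0^{-1}=T^{-1}$. Then $\epsilon(F_0)=[\widetilde F_0,\widetilde T]\,\widetilde T^{2}$, and by \cref{lem:commutator_lifts} its $W\oplus\F_2$ coordinates are $(xy+xy,\ b(x,y)+b(x,y))=0$.

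Your route gives a self-contained argument that shows exactly why a non-algebraic residue kills a class (a nontrivial character summed over $Z(T)$) and why only $N_b$ survives. The paper's route gains brevity by relying on the general twisted mass formula.
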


\begin{proof}
	This immediately follows from the Brauer twisted mass formula \cite[Thm.~8.23]{LoughranSantens2024} and the computation of the residues in  \cref{lem:residue_computation}.
\end{proof}

\begin{corollary}
    \label{cor:odd prime local Brauer transforms}
	Let $n \in \Z_{\ge 1}$.
    For every prime $p>2$ and Brauer class $\beta \in \Br_C^e B\Heis_{2^n}$ corresponding to an $\F_2$-bilinear form $b : \F_{2^n} \times \F_{2^n} \to \F_{2}$, where $C \subset \Heis_{2^n}$ is the subset of order $2$ elements, one has
	\[
	\widehat{\tau}_{\disc,p}(B\Heis_{2^n};\beta) = 1 + 3(2^n-1) p^{-1} + N_{b} p^{-3/2}.
	\]
\end{corollary}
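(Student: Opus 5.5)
The plan is to specialize \cref{lem:twisted_mass_series} at the exponent $s = a(\Heis_{2^n})$. By definition of the $p$-adic Brauer transform, $\widehat{\tau}_{\disc,p}(B\Heis_{2^n};\beta)$ is exactly the left-hand side of that lemma at $s = a(\Heis_{2^n})$. It then suffices to compute $a(\Heis_{2^n})$ and substitute.

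First, compute $a(\Heis_{2^n})$. Since $\Heis_{2^n} \le S_{8^n}$ is in its regular representation, an element $g$ of order $m$ acts with $8^n/m$ cycles, each of length $m$. Hence
\[
\ind(g) = 8^n(1 - 1/m) = 2^{3n}(1-1/m).
\]
The minimum over $g \neq 1$ is attained at $m=2$, giving $\ind = 2^{3n-1}$. Thus $a(\Heis_{2^n}) = 2^{-(3n-1)}$, which matches the value $a = 1/32$ used in the proof of \cref{thm:derivation from Loughran Santens conjecture} for $n=2$.

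Next, substitute $s = 2^{-(3n-1)}$ into \cref{lem:twisted_mass_series}. The exponents become
\[
2^{3n-1}s = 1, \qquad 3\cdot 2^{3n-2}s = 3/2.
\]
Therefore the right-hand side equals $1 + 3(2^n-1)p^{-1} + N_b p^{-3/2}$, which is the stated formula.

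There is one point to check. The paper defines $|\disc(\varphi_p)|$ via the $G$-étale algebra for the permutation representation $\Heis_{2^n}\le S_{8^n}$, so I should confirm that this is the discriminant used in \cref{lem:twisted_mass_series}. For $p>2$, every $\varphi$ is tamely ramified. The exponent of $p$ in the discriminant is then $\ind$ of the image of a generator of inertia, namely $2^{3n-1}$ for order-$2$ elements and $3\cdot 2^{3n-2}$ for order-$4$ elements. These are exactly the exponents appearing in the lemma, so the two conventions agree. With this, the corollary is a direct substitution and involves no real obstacle.
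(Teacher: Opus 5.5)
Your proposal is correct and matches the paper's proof, which is simply the specialization of \cref{lem:twisted_mass_series} at $s = a(\Heis_{2^n}) = 2^{1-3n}$. Your extra check that the exponents $2^{3n-1}$ and $3\cdot 2^{3n-2}$ coincide with the regular-representation indices of order-$2$ and order-$4$ elements, via tame ramification at $p>2$, is a reasonable consistency check.
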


\begin{proof}
	This is \cref{lem:twisted_mass_series} for $s = 2^{1-3n}$.
\end{proof}

\begin{lemma}
    \label{lem:N_beta computation for Heis_4}
    Let $C \subset \Heis_4$ be the subset of order $2$ elements.
    For every Brauer class $\beta \in \Br_C^{e} B\Heis_4$ corresponding to an $\F_2$-bilinear form $b : \F_4 \times \F_4 \to \F_2$, one has
    \[
    N_{b} = \begin{cases}
        9 & \beta \textnormal{ is trivial}, \\
        3 & \textnormal{otherwise}.
    \end{cases}
    \]
\end{lemma}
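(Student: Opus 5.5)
The plan is to make the Brauer group completely explicit using \cref{lem:Brauer_group_bilinear_pairing}, and then to evaluate the defining condition of $N_b$ in closed form. By that lemma, $\Br_C^e B\Heis_4$ is the space of $\F_2$-bilinear forms $b:\F_4\times\F_4\to\F_2$ modulo forms $(x,y)\mapsto \ell(xy)$ with $\ell$ an $\F_2$-linear functional. First I note that $N_b$ depends only on the class of $b$. Indeed, the condition $b(\lambda x,y)+b(x,\lambda y)=0$ is linear in $b$, and it holds identically for $b=\ell(xy)$ because $\ell(\lambda xy)+\ell(x\lambda y)=0$ in characteristic $2$. So it suffices to compute $N_b$ on one representative of each class. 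For the trivial class, take $b=0$; then every $(x,y)\in\F_4^\times\times\F_4^\times$ qualifies, so $N_b=9$.

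Next I parametrize all bilinear forms using the trace $\mathrm{Tr}=\mathrm{Tr}_{\F_4/\F_2}$. Since Frobenius is $\F_2$-linear, each map $(x,y)\mapsto \mathrm{Tr}(\alpha xy)+\mathrm{Tr}(\gamma xy^2)$ with $\alpha,\gamma\in\F_4$ is $\F_2$-bilinear. The map $(\alpha,\gamma)\mapsto$ form is injective; I will check this quickly, for example by evaluating at $x=1$ and using nondegeneracy of the trace pairing. Both spaces have $16$ elements, so every bilinear form has this shape. The forms $\ell(xy)$ are exactly those with $\gamma=0$, because every linear functional is $\mathrm{Tr}(\alpha\,\cdot)$. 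Hence the classes are indexed by $\gamma\in\F_4$, and the three nontrivial classes correspond to $\gamma\in\F_4^\times$. We may take $b_\gamma(x,y)=\mathrm{Tr}(\gamma xy^2)$.

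Now I compute the condition for $b_\gamma$. We have
\[
b_\gamma(\lambda x,y)+b_\gamma(x,\lambda y)=\mathrm{Tr}\bigl(\gamma xy^2(\lambda+\lambda^2)\bigr),
\]
and $\lambda+\lambda^2=\mathrm{Tr}(\lambda)\in\F_2$. For $\lambda\in\F_2$ this vanishes. For $\lambda\notin\F_2$ we have $\mathrm{Tr}(\lambda)=1$, so the condition becomes $\mathrm{Tr}(\gamma xy^2)=0$. With $x,y,\gamma$ nonzero, this says $\gamma xy^2\in\F_4^\times$ has trace zero, which forces $\gamma xy^2=1$. Thus for each $y\in\F_4^\times$ there is exactly one $x=\gamma^{-1}y^{-2}$, giving $N_{b_\gamma}=3$.

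The only step needing real care is the parametrization in the second paragraph: I must verify injectivity and identify the subspace $\gamma=0$ with the pullbacks $\ell(xy)$. Everything else is a one-line computation.
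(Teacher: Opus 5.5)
Your proof is correct, but it takes a different route from the paper.

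\textbf{The paper's route.} It never parametrizes the bilinear forms. It reduces the condition to $\lambda=\omega$ and observes that the solution set is stable under the free action $(x,y)\mapsto(\omega x,\omega y)$, so $3\mid N_b$. For each fixed $x\in\F_4^\times$, the map $y\mapsto b(\omega x,y)+b(x,\omega y)$ is a linear form, so it contributes $1$ or $3$ solutions, giving $N_b\in\{3,5,7,9\}$. Together these force $N_b\in\{3,9\}$. Finally it shows that $N_b=9$ forces $b(x,y)=b(xy,1)$, i.e.\ $b$ is a pullback.

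\textbf{Your route.} You put every form in the normal form $\mathrm{Tr}(\alpha xy)+\mathrm{Tr}(\gamma xy^2)$. You then identify the Brauer classes with $\gamma\in\F_4$ and solve the condition exactly, getting $\gamma xy^2=1$.

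\textbf{Comparison.} The paper's argument is shorter and needs no normal form. However, it relies on numerology special to $\F_4$ (divisibility by $3$ meeting the set $\{3,5,7,9\}$). Yours has several advantages:
\begin{itemize}
\item it gives explicit representatives of the nontrivial classes and the explicit solution set;
\item it proves that $N_b$ is well defined on classes, which the paper uses tacitly when it says ``we may take $b=0$'';
\item it extends to $\F_{2^n}$ via $b(x,y)=\sum_{j=0}^{n-1}\mathrm{Tr}(\gamma_j xy^{2^j})$, where the classes correspond to $(\gamma_1,\ldots,\gamma_{n-1})$, recovering the count $2^{n(n-1)}$ of \cref{lem:Brauer_group_bilinear_pairing}.
\end{itemize}

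\textbf{One small correction.} The injectivity check you suggest, evaluating at $x=1$, is not enough on its own. Since $\mathrm{Tr}(\gamma y^2)=\mathrm{Tr}(\gamma^2 y)$ in $\F_4$, setting $x=1$ only yields $\alpha=\gamma^2$. Instead, fix $y$ and use nondegeneracy of the trace pairing in $x$. This gives $\alpha y+\gamma y^2=0$ for all $y\in\F_4$, and taking $y=1$ and $y=\omega$ forces $\alpha=\gamma=0$.
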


\begin{proof}
    Write $\F_4 = \F_2[\omega]$, where $\omega^2 = \omega+1$.
    Notice by the $\F_2$-bilinearity of $b$ that
    \[
    \{(x,y) \in \F_4^{\times} \times \F_{4}^{\times} : \forall \lambda \in \F_4^{\times}, b(\lambda x,y) + b(x,\lambda y) = 0\}
    =
    \{(x,y) \in \F_4^{\times} \times \F_{4}^{\times} : b(\omega x,y) + b(x,\omega y) = 0\}.
    \]
    Also, the above subset of $\F_4 \times \F_4$ is preserved under the map $(x,y) \mapsto (\omega x,\omega y)$, therefore $N_{b}$ is a multiple of $3$, hence $N_{b} \in \{0,3,6,9\}$.
    For every $x \in \F_4^{\times}$, $b(\omega x,-)+b(x,\omega-):\F_4 \to \F_2$ is a linear form, hence the contribution to $N_{b}$ from a fixed $x \in \F_4^{\times}$ is either $2^{1} - 1 = 1$ or $2^{2} - 1 = 3$, therefore summing over all $x \in \F_4^{\times}$ implies that $N_{b} \in \{3,5,7,9\}$.
    Therefore, we deduce that $N_{b} \in \{3,9\}$.

    We have that $N_{b} = 9$ if and only if for all $x,y \in \F_4$ one has that $b(x,y) = b(xy,1)$, which implies that $b(-,-)$ is given by the pullback of the linear form $b(-,1)$ along the multiplication map $\F_4 \times \F_4 \to \F_4$.
    Conversely, if $\beta$ is trivial, then we may take $b = 0$, in which case $N_0 = 9$.
\end{proof}

\section{Computing the masses at $2$ and $\infty$}
\label{sec:first section on 2-adic mass}

We first handle the $\infty$-adic Brauer transforms of $\Heis_{2^n}$ by abstract considerations.

\begin{lemma}
    \label{lem:archimedean local Brauer transforms for general groups}
    Let $G$ be a finite regular permutation group and $C \subset G$ the subset of order $2$ elements.
    Assume that $C$ generates $G$.
    Then for every $\beta \in \Br_{C}^{e} BG$, one has
    \[
    \widehat{\tau}_{\disc,\infty}(BG;\beta) = \frac{|G[2]|}{|G|}.
    \]
\end{lemma}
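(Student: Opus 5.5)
By definition, $\widehat{\tau}_{\disc,\infty}(BG;\beta) = \frac{1}{|G|}\sum_{\varphi_\infty} e^{2i\pi\,{\rm inv}_\infty \beta(\varphi_\infty)}$, since $|\disc(\varphi_\infty)| = 1$. The homomorphisms $\varphi_\infty : \Gamma_{\R} \simeq \Z/2\Z \to G$ are in bijection with $G[2] = \{g : g^2 = 1\}$, via $\varphi_\infty \mapsto \varphi_\infty(\text{complex conjugation})$. Hence it suffices to show that ${\rm inv}_\infty\,\beta(\varphi_\infty) = 0$ for every such $\varphi_\infty$; the sum then equals $|G[2]|$ and the formula follows.

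First I would reduce $\beta$ to a concrete central extension. By \cref{lem:Brauer_group_discriminant}, since $C$ generates $G$, $\beta$ is the pushout along $\mu_2 \to \mu_\infty$ of a central extension of abstract groups with trivial Galois action
\[
0 \to \F_2 \to \widetilde{G} \to G \to 1,
\]
in which every lift of an element of $C$ has order $2$. The evaluation $\beta(\varphi_\infty) \in \Br\,\R$ is then the class of the central extension of $\Gamma_{\R}$ (by $\mu_\infty$, or equivalently by $\mu_2$, since the pushout is compatible with pullback) obtained by pulling back along $\varphi_\infty$. Put differently, it is the image of the pullback class $\varphi_\infty^*[\widetilde{G}] \in H^2(\Z/2\Z, \F_2)$ under the map $H^2(\Gamma_\R,\mu_2) \to \Br\,\R$. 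This is how I understand the evaluation map \cite[Def.\ 6.6]{LoughranSantensSurvey}; confirming that no extra twist enters is the point needing most care. Since $\Gamma_\R$ acts trivially on $\mu_2$, there is no twist to track here.

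The rest is a case split. If $\varphi_\infty$ is trivial, the pullback is the split extension and the invariant is $0$. Otherwise $g := \varphi_\infty(c)$ has order $2$, so $g \in C$. The pulled-back extension of $\Z/2\Z$ by $\F_2$ is the preimage of $\langle g \rangle$ in $\widetilde{G}$. That preimage splits precisely when some lift of $g$ has order $2$, and condition (1) of \cref{lem:Brauer_group_discriminant} guarantees that every lift does. So the class in $H^2(\Z/2\Z,\F_2) \simeq \F_2$ vanishes, since the nontrivial class there is $\Z/4\Z$, which corresponds to the quaternion algebra $(-1,-1)_\R$. Therefore $\beta(\varphi_\infty) = 0$ in every case, and the sum is $|G[2]|$. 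The regularity hypothesis only enters to make $|\disc(\varphi_\infty)| := 1$ consistent with the normalization; it plays no further role.
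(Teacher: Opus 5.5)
Your proposal is correct and follows essentially the same route as the paper's proof. You count the homomorphisms $\Gamma_{\R} \to G$ as $|G[2]|$, and you show every evaluation $\beta(\varphi_\infty)$ vanishes because the lifting problem along $\widetilde{G} \to G$ is solvable by condition (1) of \cref{lem:Brauer_group_discriminant}. Your explicit case split and your identification of $\beta(\varphi_\infty)$ with the pulled-back extension class spell out what the paper leaves implicit.
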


\begin{proof}
    The case of $\beta = 0$ is immediate; see \cite[\S7.1.1]{LoughranSantensSurvey}.
    For arbitrary $\beta \in \Br_C^e BG$, we reduce to the previous case by observing that the embedding problem
    \[
    \begin{tikzcd}
                &                &                     & \Gamma_{\R} \arrow[d] \arrow[ld, "?"', dotted] &   \\
    0 \arrow[r] & \F_2 \arrow[r] & G_{\beta} \arrow[r] & G \arrow[r]                                    & 1
    \end{tikzcd}
    \]
    is always solvable by \cref{lem:Brauer_group_discriminant}.
\end{proof}

\begin{corollary}
    \label{cor:archimedean local Brauer transforms}
    Let $n \in \Z_{\ge 1}$ and $\beta \in \Br_C^e B\Heis_{2^n}$, where $C \subset \Heis_{2^n}$ is the subset of order $2$ elements.
    Then
    \[
    \widehat{\tau}_{\disc,\infty}(B\Heis_{2^n};\beta) = \frac{|\Heis_{2^n}[2]|}{|\Heis_{2^n}|} = \frac{2\cdot 2^{2n}- 2^n}{2^{3n}}
    = 2^{1-n} - 2^{-2n}.
    \]
\end{corollary}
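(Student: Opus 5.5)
The first equality is \cref{lem:archimedean local Brauer transforms for general groups} with $G = \Heis_{2^n}$ in its regular representation. To apply it, I only need to check that the set $C$ of order $2$ elements generates $\Heis_{2^n}$. By \cref{lem:conjugacy_classes}, $C$ contains the elements $(x,0,0)$ and $(0,y,0)$ for all $x,y \in \F_{2^n}^\times$. Their images already span $V = \F_{2^n} \oplus \F_{2^n}$. By \cref{lem:commutator_lifts} and \cref{lem:quadratic_map_Heis}, the commutator of $(x,0,0)$ and $(0,y,0)$ is the central element $(0,0,xy)$, and these commutators exhaust $W = \F_{2^n}$. Hence $\langle C\rangle = \Heis_{2^n}$, and the lemma gives $\widehat{\tau}_{\disc,\infty}(B\Heis_{2^n};\beta) = |\Heis_{2^n}[2]|/|\Heis_{2^n}|$.

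It remains to count $\Heis_{2^n}[2]$, which consists of the identity together with $C$. By \cref{lem:2-torsion elements} and \cref{lem:quadratic_map_Heis}, an element $(x,y,z)$ lies in $\Heis_{2^n}[2]$ exactly when $Q(x,y) = xy = 0$, with $z$ arbitrary. The pairs $(x,y) \in \F_{2^n}^2$ with $xy = 0$ number $2\cdot 2^n - 1$. Each such pair allows $2^n$ choices of $z$, so
\[
|\Heis_{2^n}[2]| = (2^{n+1}-1)\,2^n = 2\cdot 2^{2n} - 2^n.
\]
As a cross-check, \cref{lem:conjugacy_classes} gives $1 + (2^n-1) + 2(2^n-1)2^n$, which is the same number.

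Dividing by $|\Heis_{2^n}| = 2^{3n}$ gives $2^{1-n} - 2^{-2n}$. For $n=2$ this is $7/16$, consistent with the factor used in the proof of \cref{thm:derivation from Loughran Santens conjecture}. None of the steps is a real obstacle; the only point that needs care is verifying the generation hypothesis of \cref{lem:archimedean local Brauer transforms for general groups}.
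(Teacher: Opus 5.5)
Your proposal is correct and follows the paper's route: the corollary is an immediate application of \cref{lem:archimedean local Brauer transforms for general groups} to $G=\Heis_{2^n}$, combined with the count $|\Heis_{2^n}[2]| = (2^{n+1}-1)\,2^n$ coming from $Q_{\Heis_{2^n}}(x,y)=xy$. Your explicit check that $C$ generates $\Heis_{2^n}$ supplies a hypothesis the paper leaves implicit; it is even quicker to note that the central elements $(0,0,z)$ with $z\neq 0$ already lie in $C$.
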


It remains to compute the $2$-adic mass of $\Heis_4$, which is more involved.
For any finite group $G$, the $2$-adic mass $\tau_{\disc,2}(BG)$ is effectively computable using Krasner's lemma, although a brute-force search through all \'etale algebras of degree at most $|G|$ becomes computationally unreasonable once $|G|$ is moderately large.
To alleviate this problem for the family of Heisenberg groups, we prove the following theorem that allows us to compute the $2$-adic mass more efficiently.

\begin{definition}
\label{def:admissible triple}
Let $G$ be a finite group.
For every triple of elements $(g_{-1},g_{5},g_{2})$ in $G$, define
\[
G_1:=\langle g_{-1},g_2,[g_{-1},g_5]\rangle,\quad
G_5:=\langle g_2,[g_5,g_2],[g_{-1},g_2]\rangle,
\]
\[
G_{13}:=\langle [g_{-1},g_2],g_2^2\rangle,\quad
G_{29}:=\langle g_2^2\rangle,
\]
as well as
\[
d(g_{-1},g_{5},g_{2})
:=
4\left(1-\frac1{|G_1|}\right)
+2\left(1-\frac1{|G_5|}\right)
+\left(1-\frac1{|G_{13}|}\right)
+\left(1-\frac1{|G_{29}|}\right).
\]
We say that a triple of elements $(g_{-1},g_{5},g_{2})$ in $G$ is an \textit{admissible triple} of $G$ if $g_{-1}^2 = [g_5,g_2]$.
\end{definition}

\begin{theorem}
    \label{thm:bijection between 2-adic homomorphisms and admissible triples}
    Let $G$ be a finite $2$-group of nilpotency class $\le 2$ and exponent $\le 4$.
    Then the following statements hold.
    \begin{enumerate}
    \item[(i)] There is a bijection between continuous homomorphisms $\varphi : \Gamma_{\Q_2} \to G$ and admissible triples.
    \item[(ii)] If $\varphi:\Gamma_{\Q_2} \to G$ corresponds to an admissible triple $(g_{-1},g_{5},g_{2})$, then the Galois discriminant exponent of $\varphi$ is equal to $\frac{|G|}{2} \cdot d(g_{-1},g_{5},g_{2})$.
    \item[(iii)] If $\widetilde{G}$ is another finite $2$-group of nilpotency class $\le 2$ and exponent $\le 4$ and $\pi : \widetilde{G} \to G$ is a homomorphism, then $\varphi:\Gamma_{\Q_2} \to G$ factors through $\pi$ if and only if the admissible triple $(g_{-1},g_{5},g_{2})$ of $G$ corresponding to $\varphi$ lifts to an admissible triple $(\widetilde{g}_{-1},\widetilde{g}_5,\widetilde{g}_2)$ of $\widetilde{G}$ along $\pi$.
    \end{enumerate}
\end{theorem}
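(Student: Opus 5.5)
The plan is to deduce all three parts from \cref{thm:intro thm presentation and ramification filtration at 2}, which is proved in \cref{sec:second section on 2-adic mass}. The key observation is that $\mc{L}$ is a level: it is closed under subgroups, quotients and finite subdirect products, since the class of $2$-groups of nilpotency class $\le 2$ and exponent $\le 4$ is a variety of groups. Hence every continuous homomorphism $\varphi:\Gamma_{\Q_2}\to G$ with $G\in\mc{L}$ factors uniquely through the maximal pro-$\mc{L}$ quotient $\Gamma_{\Q_2}^{\mc{L}}$. The theorem gives the presentation $\langle \sigma_{-1},\sigma_5,\sigma_2\mid \sigma_{-1}^2=[\sigma_5,\sigma_2]\rangle^{\mc{L}}$, which means $\Gamma_{\Q_2}^{\mc{L}}$ is the quotient of the relatively free group $F^{\mc{L}}_3$ (free in the variety) by the normal closure of that one relation. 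Homomorphisms $F^{\mc{L}}_3\to G$ correspond to arbitrary triples $(g_{-1},g_5,g_2)\in G^3$, and such a homomorphism factors through $\Gamma_{\Q_2}^{\mc{L}}$ exactly when $g_{-1}^2=[g_5,g_2]$. This gives (i), with $\varphi\mapsto(\varphi(\sigma_{-1}),\varphi(\sigma_5),\varphi(\sigma_2))$. Part (iii) then follows formally. A map $\varphi$ factors through $\pi$ if and only if the induced map $\Gamma_{\Q_2}^{\mc{L}}\to G$ lifts to $\widetilde G$, since $\widetilde G\in\mc{L}$. By (i) applied to $\widetilde G$, such lifts are exactly admissible triples of $\widetilde G$ mapping to $(g_{-1},g_5,g_2)$.

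For (ii), I would use the conductor–discriminant formula for the regular representation. The discriminant exponent of the $G$-étale algebra attached to $\varphi$ equals the Artin conductor of the regular representation of $\varphi(\Gamma_{\Q_2})$ induced up to $G$. Via the lower numbering filtration $H_0\supseteq H_1\supseteq\cdots$ of the image $H$, this is
\[
\sum_{i\ge 0}\frac{|H_i|}{|H_0|}\,\#\{\text{nontrivial cosets moved}\}=\sum_{i\ge0}\frac{|H_i|}{|H_0|}\cdot\frac{|G|}{|H_i|}\bigl(|H_i|-1\bigr).
\]
Here $H_i$ acts freely on $G$, so each $H_i$-orbit has size $|H_i|$ and contributes $|H_i|-1$. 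This simplifies to $|G|\sum_i \frac{1}{|H_0|}(|H_i|-1)$. Next, \cref{prop:ramification filtration} supplies the lower ramification filtration of $\Gamma_{\Q_2}^{\mc{L}}$. The filtration of a quotient is obtained from the upper numbering, so I would instead compute directly with the Galois extension $M$ cut out by $\Gamma_{\Q_2}^{\mc{L}}$ and apply the transitivity of the different. The discriminant of the fixed field of $\ker\varphi$ equals the discriminant exponent $v_2(\mathfrak d_{M/\Q_2})$ minus the contribution of $\ker\varphi$. Equivalently, and more cleanly, I would use the Artin-character formula: the discriminant exponent equals $\frac{1}{e_M}\sum_{i}(|G|-|G|/|\varphi(\Gamma_i)|)$ summed over the lower filtration $\Gamma_i$ of $\Gamma=\Gamma_{\Q_2}^{\mc{L}}$. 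Here the images $\varphi(\Gamma_i)$ are $\varphi$ applied to the explicit generators from \cref{prop:ramification filtration}. I expect the jumps to be such that, after normalizing by the ramification index of the order-$256$ extension, the weighted count of steps is $4,2,1,1$. The steps would have images $G_1=\langle g_{-1},g_2,[g_{-1},g_5]\rangle$ (inertia), $G_5$, $G_{13}$ and $G_{29}$, so the formula collapses to $\frac{|G|}{2}\,d(g_{-1},g_5,g_2)$ with $d$ as in \cref{def:admissible triple}.

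The main obstacle is this last step. It requires matching the explicit filtration in \cref{prop:ramification filtration} with weights $4,2,1,1$ and with generators whose images are exactly $G_1,G_5,G_{13},G_{29}$. In particular, I would need to check that $\varphi(\Gamma_i)$ is the subgroup generated by the images of the listed generators of $\Gamma_i$; this holds because $\varphi$ is a homomorphism and $\Gamma_i$ is generated by them as a subgroup, not just as a normal subgroup. As a sanity check, I would verify the formula on abelian quotients, $G=\Z/2,\Z/4$, against the known conductors of $\Q_2(\sqrt{-1}),\Q_2(\sqrt{2}),\Q_2(\zeta_{16})^+$.
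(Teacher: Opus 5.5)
Your arguments for (i) and (iii) are the paper's. You present $\mc{G}_{2,2}(\Q_2)=\Gamma_{\Q_2}^{\mc{L}}$ as the one-relator quotient of the relatively free group, so homomorphisms to $G$ are exactly admissible triples, and lifting along $\pi$ is then formal. For (ii) you also take the paper's route: compute the Artin conductor of $\C[G]$ using the lower filtration $\Gamma_i$ of the whole group $\Gamma=\mc{G}_{2,2}(\Q_2)$, and feed in \cref{prop:ramification filtration}. However, the decisive step contains a genuine error.

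The formula you actually propose to use,
\[
\frac{1}{e_M}\sum_{i}\Bigl(|G|-\frac{|G|}{|\varphi(\Gamma_i)|}\Bigr),
\]
is missing the weight $|\Gamma_i|$. The correct formula is $f=\sum_{i\ge 0}\frac{|\Gamma_i|}{|\Gamma_0|}\bigl(\dim V-\dim V^{\Gamma_i}\bigr)$. For $V=\C[G]$ with $\Gamma$ acting through $\varphi$, this gives $|G|\sum_{i\ge 0}\frac{|\Gamma_i|}{|\Gamma_0|}\bigl(1-\frac{1}{|\varphi(\Gamma_i)|}\bigr)$. You may use the filtration of the larger group instead of that of $\varphi(\Gamma)$ because $f$ is an integral over the upper numbering, which is compatible with quotients.

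As written, your formula weights the four steps only by their lengths $2,4,8,16$, i.e.\ in ratio $1:2:4:8$. So the weights $4,2,1,1$ you ``expect'' do not follow from it, and you never actually carry out the computation.

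With the correct weight you need the orders $|\Gamma_i|=64,16,4,2$ on the ranges $0\le i\le 1$, $2\le i\le 5$, $6\le i\le 13$, $14\le i\le 29$. These are read off from \cref{prop:ramification filtration} and $|\Gamma|=256$. The step weights are then $2\cdot\frac{64}{64}=2$, $4\cdot\frac{16}{64}=1$, $8\cdot\frac{4}{64}=\frac12$ and $16\cdot\frac{2}{64}=\frac12$. Hence the exponent is $|G|\bigl(2(1-\frac{1}{|G_1|})+(1-\frac{1}{|G_5|})+\frac12(1-\frac{1}{|G_{13}|})+\frac12(1-\frac{1}{|G_{29}|})\bigr)=\frac{|G|}{2}\,d(g_{-1},g_5,g_2)$. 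This is exactly the paper's computation via \cref{lem:2-adic mass in terms of conductor discriminant formula}.

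Separately, your sentence that the discriminant of the fixed field equals $v_2(\mathfrak d_{M/\Q_2})$ ``minus the contribution of $\ker\varphi$'' is not a correct identity and should be dropped.
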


\begin{corollary}
    \label{cor:2-adic mass in terms of admissible triples}
    Let $n \in \Z_{\ge 1}$ and $G = \Heis_{2^n}$.
    Then for every Brauer class $\beta \in \Br_C^e B\Heis_{2^n}$ corresponding to an $\F_2$-bilinear form $b : \F_{2^n} \times \F_{2^n} \to \F_2$, where $C \subset G$ is the subset of order $2$ elements, one has
    \[
    \widehat{\tau}_{\disc,2}(BG;\beta) = 
    \frac{1}{|G|} \sum_{(g_{-1},g_{5},g_{2})} 2^{-d(g_{-1},g_{5},g_{2})}
    (-1)^{b(x_{-1},y_{-1}) + b(x_{5},y_{2}) + b(x_{2},y_{5})},
    \]
    where $(g_{-1},g_5,g_2) \in G \times G \times G$ varies over all admissible triples and $g_i = (x_i,y_i,z_i)$ for $i \in \{-1,5,2\}$.
\end{corollary}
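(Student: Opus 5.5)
The plan is to start from the definition of $\widehat{\tau}_{\disc,2}(BG;\beta)$ and rewrite each summand using \cref{thm:bijection between 2-adic homomorphisms and admissible triples}.

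\emph{Discriminant weight.} By part (i) of that theorem, the sum over $\varphi:\Gamma_{\Q_2}\to G$ becomes a sum over admissible triples $(g_{-1},g_5,g_2)$. Since $G=\Heis_{2^n}$ is in its regular representation and $2$ is the smallest prime dividing $|G|$, we have $a(G)^{-1}=|G|/2$. Part (ii) gives $|\disc(\varphi)|=2^{\frac{|G|}{2}d(g_{-1},g_5,g_2)}$. Hence $|\disc(\varphi)|^{-a(G)}=2^{-d(g_{-1},g_5,g_2)}$, and the prefactor $1/|G|$ is carried along unchanged.

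\emph{Brauer weight.} By \cref{lem:Brauer_group_bilinear_pairing} and the construction of $\Psi_G$, $\beta$ is the pushout along $\mu_2\to\mu_\infty$ of the central extension $0\to\F_2\to G_\beta\to G\to 1$ attached to $(Q_G,Q_b)$. The evaluation $\beta(\varphi)\in\Br\,\Q_2$ is therefore the image of $\varphi^*[G_\beta]\in \operatorname{H}^2(\Q_2,\mu_2)=\Br(\Q_2)[2]\cong\Z/2$; I would cite \cite[Def.\ 6.6]{LoughranSantensSurvey} and functoriality of the evaluation map for this. Consequently $e^{2\pi i\,{\rm inv}_2\beta(\varphi)}=1$ if the embedding problem $\Gamma_{\Q_2}\to G$ lifts to $G_\beta$, and $-1$ otherwise.

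\emph{Applying part (iii).} The group $G_\beta$ is again a central extension of $V$ by the $\F_2$-vector space $W\oplus\F_2$. It is therefore a $2$-group of class $\le 2$ and exponent $\le 4$, so part (iii) applies. Thus $\varphi$ lifts if and only if the triple lifts to an admissible triple of $G_\beta$.

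\emph{Liftability criterion.} Take arbitrary lifts $\widetilde g_i$ of the $g_i$. Any other lift differs by a central element of order $2$, which changes neither squares nor commutators. So admissibility of the lift is independent of the choice, and the question is whether $\widetilde g_{-1}^2=[\widetilde g_5,\widetilde g_2]$ in $G_\beta$. Both sides lie in the centre $W\oplus\F_2$.
\begin{itemize}
\item By \cref{lem:central_extensions_quadratic_forms}, $\widetilde g_{-1}^2=(Q_G,Q_b)(x_{-1},y_{-1})$, whose $\F_2$-coordinate is $b(x_{-1},y_{-1})$.
\item By \cref{lem:commutator_lifts}, the commutator is the polarization of $(Q_G,Q_b)$ at $(x_5,y_5),(x_2,y_2)$. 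By bilinearity of $b$, its $\F_2$-coordinate is $b(x_5,y_2)+b(x_2,y_5)$.
\item The $W$-coordinates agree because $(g_{-1},g_5,g_2)$ is admissible in $G$.
\end{itemize}
Hence the lift is admissible if and only if $b(x_{-1},y_{-1})+b(x_5,y_2)+b(x_2,y_5)=0$. This gives the sign $(-1)^{b(x_{-1},y_{-1})+b(x_{5},y_{2})+b(x_{2},y_{5})}$, and combining with the discriminant weight yields the stated formula.

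\emph{Main obstacle.} The delicate step is the identification of $e^{2\pi i\,{\rm inv}_2\beta(\varphi)}$ with liftability to $G_\beta$. This needs the compatibility of the evaluation map with pushouts, and that $\Br(\Q_2)[2]\cong\Z/2$ detects exactly the obstruction class $\varphi^*[G_\beta]\in\operatorname{H}^2(\Q_2,\mu_2)$. One should also check that the formula is independent of the choice of $b$ representing $\beta$. Changing $b$ by a pullback of a linear form $\ell$ along multiplication changes the sign exponent by $\ell(x_{-1}y_{-1}+x_5y_2+x_2y_5)$. This is $\ell$ applied to the $W$-part of the admissibility relation, which vanishes.
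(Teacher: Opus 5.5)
Your proposal is correct and follows the paper's proof essentially step for step. Both use parts (i)–(ii) of \cref{thm:bijection between 2-adic homomorphisms and admissible triples} for the weight $2^{-d}$, identify the sign with solvability of the embedding problem into $G_\beta$, apply part (iii), and read off the $\F_2$-coordinate of $\widetilde g_{-1}^2=[\widetilde g_5,\widetilde g_2]$ from $(Q_G,Q_b)$. Your extra checks are details the paper leaves implicit: that the condition does not depend on the choice of lift, that it does not depend on the representative $b$, and that $\operatorname{H}^2(\Q_2,\mu_2)\hookrightarrow\Br\,\Q_2$.
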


\begin{proof}
    The case of the trivial class $\beta = 0$ is immediate by \cref{thm:bijection between 2-adic homomorphisms and admissible triples}(i) and (ii).
    For arbitrary $\beta$, the sign of $\pm 2^{-d(g_{-1},g_{5},g_{2})}$ appearing in $\widehat{\tau}_{\disc,2}(BG;\beta)$ is determined by the solvability of the embedding problem
    \[
    \begin{tikzcd}
                &                &                     & \Gamma_{\Q_2} \arrow[d] \arrow[ld, "?"', dotted] &   \\
    0 \arrow[r] & \F_2 \arrow[r] & G_{\beta} \arrow[r] & G \arrow[r]                                      & 1.
    \end{tikzcd}
    \]
    In other words, the sign of $\pm 2^{-d(g_{-1},g_{5},g_{2})}$ is $+1$ if the embedding problem is solvable, and $-1$ otherwise.

    Both $G$ and $G_{\beta}$ are central extensions of a finite-dimensional $\F_2$-vector space by another such vector space, therefore $G$ and $G_{\beta}$ are both finite $2$-groups of nilpotency class $\le 2$ and exponent $\le 4$, so we may apply \cref{thm:bijection between 2-adic homomorphisms and admissible triples}(iii) to the map $\pi : G_{\beta} \to G$.
    
    For all $i \in \{-1,5,2\}$, write $g_i = (x_i,y_i,z_i) \in G$.
    A triple $(g_{-1},g_{5},g_{2})$ of $G$ is admissible if and only if
    \[
    Q_G(x_{-1},y_{-1}) = \langle (x_5,y_5),(x_2,y_2)\rangle_{G}.
    \]
    If $(\widetilde{g}_{-1},\widetilde{g}_{5},\widetilde{g}_2)$ is an arbitrary lift of the triple $(g_{-1},g_{5},g_{2})$ to $G_{\beta}$, then in coordinates we have for all $i \in \{-1,5,2\}$ that $\widetilde{g}_i = (x_i,y_i,z_i \oplus t_i)$, where $t_i \in \F_2$.
    Since $Q_{G_{\beta}} = (Q_G,Q_b)$, we have that $(\widetilde{g}_{-1},\widetilde{g}_{5},\widetilde{g}_2)$ is admissible if and only if 
    \[
    Q_G(x_{-1},y_{-1}) \oplus Q_b(x_{-1},y_{-1}) = \langle (x_5,y_5),(x_2,y_2)\rangle_G \oplus \langle (x_5,y_5),(x_2,y_2)\rangle_b.
    \]
    In other words, $(\widetilde{g}_{-1},\widetilde{g}_{5},\widetilde{g}_2)$ is admissible if and only if $(g_{-1},g_5,g_2)$ is admissible and the quantity
    \[
    Q_b(x_{-1},y_{-1}) + \langle(x_5,y_5),(x_2,y_2)\rangle_b = 
    b(x_{-1},y_{-1}) + b(x_5,y_2) + b(x_2,y_5)
    \]
    vanishes.
    Thus, the correct sign for the expression $\pm 2^{-d(g_{-1},g_{5},g_{2})}$ appearing in the expression for $\widehat{\tau}_{\disc,2}(BG;\beta)$ is given by
    \[
    (-1)^{ b(x_{-1},y_{-1}) + b(x_5,y_2) + b(x_2,y_5)}.
    \]
    This finishes the proof.
\end{proof}

\begin{corollary}
\label{cor:numerical 2-adic mass of Heis_4}
Let $G = \Heis_4$, $C \subset G$ be the subset of order $2$ elements, and $\beta \in \Br_C^e B\Heis_4$ correspond to an $\F_2$-bilinear form $b:\F_4 \times \F_4 \to \F_2$.
Then one has
\[
\widehat{\tau}_{\disc,2}(BG;\beta) = 
\frac{1}{16}
\begin{cases}
    313+54\sqrt{2}+54\sqrt[4]{2}+9\sqrt{2}\sqrt[4]{2} & \beta \textnormal{ is trivial}, \\
    247 + 30\sqrt[4]{2} + 36\sqrt{2} - 3\sqrt{2}\sqrt[4]{2} & \textnormal{otherwise}.
\end{cases}
\]
\end{corollary}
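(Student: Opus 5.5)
The plan is to evaluate the formula of \cref{cor:2-adic mass in terms of admissible triples} for $G=\Heis_4$. The computation is organized so that it reduces to a finite, structured enumeration rather than a brute-force sum over $64^3$ triples. Write $g_i=(x_i,y_i,z_i)$ and $v_i=(x_i,y_i)\in V=\F_4^2$.

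By \cref{lem:quadratic_map_Heis} and \cref{lem:commutator_lifts}, admissibility $g_{-1}^2=[g_5,g_2]$ reads
\[
x_{-1}y_{-1}=x_5y_2+x_2y_5 .
\]
This condition involves only the images $v_i$. The $z_i$ are free, contributing a factor $4^3=64$ for each admissible triple $(v_{-1},v_5,v_2)$ of $V$.

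Next I check that $d$ also depends only on the $v_i$. Each of $G_1,G_5,G_{13},G_{29}$ is generated by elements whose $V$-images are determined by the $v_i$: $g_2^2=(0,0,x_2y_2)$, $[g_a,g_b]=(0,0,x_ay_b+x_by_a)$, and $g_{-1},g_2$ themselves. The order of such a subgroup of a class-$2$ group is computed as follows. Let $U$ be the $\F_2$-span of the $V$-images of the generators. Then
\[
|H| = |U|\cdot|H\cap W|,
\]
where $H\cap W$ is the $\F_2$-span of the central generators, the squares $Q(u)$ for $u\in U$, and the commutators of pairs of generators. All of this is determined by $v_{-1},v_5,v_2$. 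The sign $(-1)^{b(x_{-1},y_{-1})+b(x_5,y_2)+b(x_2,y_5)}$ likewise depends only on the $v_i$. Hence
\[
\widehat\tau_{\disc,2}(BG;\beta)=\frac{64}{64}\sum_{\substack{(v_{-1},v_5,v_2)\in V^3\\ \text{admissible}}} 2^{-d}\,(\pm1)_b .
\]
This is a sum over a subset of $16^3=4096$ triples.

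To make the enumeration tractable by hand, I would exploit symmetry. The group $\F_4^\times\times\F_4^\times$ acts by $(x,y)\mapsto(\lambda x,\mu y)$, Frobenius acts, and $(x,y)\mapsto(y,x)$ swaps coordinates. These preserve admissibility and the orders $|G_j|$. For the nontrivial class, I take the representative bilinear form $b(x,y)=\mathrm{tr}(\omega xy^2)$ (or any $b$ not pulled back from multiplication), using \cref{lem:Brauer_group_bilinear_pairing}. I only use the symmetries compatible with $b$ up to adding a pulled-back form. The enumeration would be stratified by the dimension of $U_1=\langle v_{-1},v_2\rangle$ and $U_5$, and by whether $x_2y_2=0$, since this governs $|G_{29}|\in\{1,2\}$. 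The exponents $d$ lie in $\frac14\Z$, which explains the appearance of $\sqrt[4]2$ and $\sqrt2$. Within each stratum one counts admissible solutions, together with the signed count for $b$. The results are then collected as $a+b\sqrt2+c\sqrt[4]2+e\sqrt2\sqrt[4]2$ times $2^{-?}$, and should match $\frac1{16}(\dots)$. As sanity checks, I would verify that the $\beta=0$ total is consistent with independent counts of quadratic and biquadratic subextensions (projecting to $V$ gives the mass of $(\Z/2)^4$), and that the signed sum does not depend on the chosen representative $b$. I would also re-evaluate by computer, the route the paper itself indicates.

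The main obstacle is the bookkeeping of $|G_1|$ and $|G_5|$. Their central parts depend delicately on commutators and squares within $U$, and errors there shift the powers of $2^{1/4}$. I would first tabulate $|\langle v,w,\text{central}\rangle|$ for all pairs of orbit representatives, and only then sum.
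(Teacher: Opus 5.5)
Your plan of evaluating the admissible-triple formula of \cref{cor:2-adic mass in terms of admissible triples} is the same as the paper's, which simply checks that finite sum by computer. However, your reduction from $G^3$ to $V^3$ contains a genuine error: $d(g_{-1},g_5,g_2)$ does \emph{not} depend only on $v_{-1},v_5,v_2$. The identity $|H|=|U|\cdot|H\cap W|$ is fine, but your description of $H\cap W$ omits the elements coming from linear relations among the $V$-images of the generators. If a product of generators has trivial image in $V$, it lies in $W$, and its $W$-coordinate involves the $z_i$.
\begin{itemize}
\item For $G_5=\langle g_2,[g_5,g_2],[g_{-1},g_2]\rangle$ this happens when $v_2=0$; then $G_5=\langle(0,0,z_2)\rangle$.
\item For $G_1$ it happens when $v_{-1}=0$, $v_2=0$, or $v_{-1}=v_2$; in the last case $g_{-1}g_2=(0,0,z_{-1}+z_2+x_2y_2)\in G_1$.
\end{itemize}
The simplest instance is the admissible triple $((0,0,z),1,1)$. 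It has $d=0$ for $z=0$ but $d=4(1-\tfrac12)=2$ for $z\neq0$, because the latter is the trivial homomorphism twisted by the quadratic character of $\Q_2(\sqrt{-1})$. In general, changing the $z_i$ with the $v_i$ fixed twists $\varphi$ by a homomorphism $\Gamma_{\Q_2}\to W$, and this changes the discriminant. So the identity $\widehat\tau_{\disc,2}(BG;\beta)=\sum_{V^3}2^{-d}(\pm1)_b$ is false. The degenerate strata contain the heaviest terms (small $d$), so the error is not negligible. For example, $v=(0,0,0)$ contributes $1$ in your formula but only $\tfrac{4}{64}\cdot\tfrac{23}{8}\approx 0.18$ in reality. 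Evaluating your sum would therefore not produce the stated constants.

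Several ingredients do depend only on the $v_i$: admissibility, the sign $(-1)^{b(x_{-1},y_{-1})+b(x_5,y_2)+b(x_2,y_5)}$, $|G_{13}|$ and $|G_{29}|$. Also $z_5$ is genuinely free, giving a factor of $4$, since $g_5$ enters $G_1$ and $G_5$ only through commutators. A correct hand computation must therefore keep an honest sum over $(z_{-1},z_2)\in\F_4^2$ in the strata $v_{-1}=0$, $v_2=0$, $v_{-1}=v_2$. Alternatively, enumerate all admissible triples in $G^3$ by machine, as the paper does.

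There are two smaller points.
\begin{itemize}
\item Your sanity check against ``the mass of $(\Z/2)^4$'' is not meaningful as stated. The admissible $V$-triples are only those that lift to $G$, and they are weighted by $G$-discriminants, not $V$-discriminants.
\item The statement asserts a single common value for all three nontrivial classes, and this needs an argument. Write $b(x,y)=\mathrm{tr}(x(ay+cy^2))$, so the class of $b$ is determined by $c\in\F_4$. The automorphism $(x,y,z)\mapsto(\lambda x,y,\lambda z)$ of $\Heis_4$ preserves admissibility and $d$, and replaces $c$ by $\lambda c$. Hence it permutes the nonzero classes transitively.
\end{itemize}
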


\begin{proof}
    See the verification code in the accompanying \href{https://github.com/JackMillerMath/2-adic-mass-computation-for-Heis_4-regular-representation}{\textnormal{GitHub repository}}.\footnote{\url{https://github.com/JackMillerMath/2-adic-mass-computation-for-Heis_4-regular-representation}}
\end{proof}

\section{Proof of \cref{thm:intro thm presentation and ramification filtration at 2}}
\label{sec:second section on 2-adic mass}

In this section, we prove \cref{thm:intro thm presentation and ramification filtration at 2,thm:bijection between 2-adic homomorphisms and admissible triples}.

\subsection{Nilpotent $2$-groups of bounded class and exponent}

For all $c,m \in \Z_{\ge 0}$ we define $\mc{N}_{c,m}$ to be the set of isomorphism classes of finite $2$-groups that have nilpotency class $\le c$ and exponent $\le 2^m$.
We note that $\mc{N}_{c,m}$ is a \textit{formation}, meaning that $\mc{N}_{c,m}$ is closed under taking quotients and finite subdirect products in the category of finite groups.
For every $2$-adic local field $k$, we define $\mc{G}_{c,m}(k)$ to be the pro-$\mc{N}_{c,m}$-completion of the profinite group $\Gamma_{k}$.

\begin{lemma}
    \label{lem:finitely generated formation}
    For all $c,m \in \Z_{\ge0}$, $\mc{N}_{c,m}$ is a finitely generated formation.
\end{lemma}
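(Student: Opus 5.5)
The plan is to take ``finitely generated formation'' in the sense of a \emph{level} in \cite{SawinWoodMomentMethod}: we must exhibit finitely many finite groups $H_1,\dots,H_r \in \mc{N}_{c,m}$ such that every group in $\mc{N}_{c,m}$ is obtained from them by finite subdirect products and quotients. The reverse inclusion is immediate, since we already noted that $\mc{N}_{c,m}$ is a formation. The natural approach is through varieties of groups. Let $\mathfrak{V}$ be the variety of all groups that are nilpotent of class $\le c$ and satisfy $x^{2^m}=1$. Every finite $2$-group of class $\le c$ and exponent $\le 2^m$ lies in $\mathfrak{V}$. Conversely, every finite group in $\mathfrak{V}$ is nilpotent of exponent dividing $2^m$, hence is a $2$-group. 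So $\mc{N}_{c,m}$ is exactly the class of finite groups in $\mathfrak{V}$.

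\textbf{Step 1: $\mathfrak{V}$ is generated by a single finite group.} The key input is that $\mathfrak{V}$ is a \emph{Cross variety}: it is finitely based, locally finite, and has only finitely many critical groups up to isomorphism. Finite basis follows from Lyndon's theorem that nilpotent varieties are finitely based. Local finiteness holds because a finitely generated nilpotent group of bounded exponent is finite. Finiteness of the set of critical groups follows from the standard fact that nilpotent varieties of finite exponent are Cross (Oates--Powell/Kov\'acs--Newman). By the theory of Cross varieties, $\mathfrak{V} = \operatorname{var}(H)$ for a single finite group $H$, for instance the direct product of the finitely many critical groups.

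As a more hands-on alternative, I would try to show directly that $\mathfrak{V}$ is generated by its relatively free group $F_r(\mathfrak{V})$ for some explicit $r$ depending on $c,m$. This group is finite by local finiteness. The idea is that any law of $\mathfrak{V}$ failing in $F_d(\mathfrak{V})$ already fails after a specialization $F_d \to F_r$, which one would establish by a multilinearity argument on basic commutators. I expect this step to be the main obstacle, and I would cite the Cross-variety literature rather than reprove it.

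\textbf{Step 2: from the variety to the formation.} By Birkhoff's theorem, every finite $G \in \mathfrak{V}$ is a quotient of a subgroup $K$ of a direct power $H^I$. Since $G$ is finite, we may take $K$ finitely generated, hence finite, and replace $I$ by a finite set. Now $K \le H^k$ is a subdirect product of its coordinate projections $\mathrm{pr}_j(K)$, each of which is a subgroup of $H$. Take $H_1,\dots,H_r$ to be the finitely many subgroups of $H$ up to isomorphism; these lie in $\mc{N}_{c,m}$. Then every $G \in \mc{N}_{c,m}$ is a quotient of a finite subdirect product of the $H_i$. This shows that the formation generated by $H_1,\dots,H_r$ is all of $\mc{N}_{c,m}$, as required.
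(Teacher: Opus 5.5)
Your proof is correct, but your main route through Step 1 differs from the paper's, and your Step 2 is more explicit than the paper's.

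\textbf{Step 1.} The paper's proof is essentially your ``hands-on alternative'' with $r=c$:
\begin{itemize}
\item it cites \cite[Cor.~35.12]{NeumannVarietiesOfGroups} (a variety of class $\le c$ is generated by its $c$-generator groups) to get $\mc{V}_{c,m}=\operatorname{var}F_c(\mc{V}_{c,m})$;
\item it then checks directly that $F_c(\mc{V}_{c,m})$ is finite, since its lower central quotients are finitely generated abelian groups of exponent dividing $2^m$.
\end{itemize}
So the step you single out as the main obstacle is a textbook citation and needs no multilinearity argument.

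Your Cross-variety route is heavier and, as written, somewhat circular. You justify ``finitely many critical groups'' by the fact that nilpotent varieties of finite exponent are Cross. That fact already \emph{is} the statement that $\mathfrak{V}$ is generated by a finite group. Oates--Powell only shows that varieties generated by a finite group are Cross, so invoking it presupposes what you want. The usual proof of the fact you cite is precisely the paper's argument. Lyndon's finite basis theorem and critical groups are never really used; all you need is one finite $H$ with $\mathfrak{V}=\operatorname{var}(H)$.

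\textbf{Step 2.} Here you spell out a point the paper compresses into the phrase ``generates $\mc{N}_{c,m}$ as a formation.'' The formation generated by a single finite group $H$ consists of quotients of subdirect powers of $H$. This can be strictly smaller than the class of finite groups in $\operatorname{var}(H)$. For example, every subdirect power of $S_3$ has abelianization an elementary abelian $2$-group, so $C_3$ is not in the formation generated by $S_3$.

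Your approach avoids this for any finite $H$: you take all subgroups of $H$ as generators and combine this with Birkhoff's theorem and local finiteness. The paper's single generator $F_c(\mc{V}_{c,m})$ does suffice, but only because of its relative freeness. For $d\ge c$, the surjections $F_d(\mc{V}_{c,m})\to F_c(\mc{V}_{c,m})$ that keep $c$ free generators and kill the rest separate points. The paper leaves this implicit.
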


\begin{proof}
    It is a well-known statement in universal algebra \cite[Cor.\ 35.12]{NeumannVarietiesOfGroups} that the variety $\mc{V}$ of nilpotent groups of class at most $c$ is generated by the groups $H \in \mc{V}$ such that $H$ is generated by at most $c$ elements.
    In particular, if we consider the variety of groups
    \[
    \mc{V}_{c,m} := {\rm Var}\left(
    x^{2^m} = 1, [x_1,[x_2,[\ldots,[x_c,x_{c+1}]]]] = 1
    \right),
    \]
    then we observe that $\mc{V}_{c,m}$ is generated by the universal group of class $\le c$, exponent dividing $2^m$, and generated by at most $c$ elements given by
    \[
    F_c(\mc{V}_{c,m}) := F_c / \langle y^{2^m}, \ [y_1,[y_2,[\ldots,[y_c,y_{c+1}]]]] : y,y_1,\ldots,y_{c+1} \in F_c \rangle,
    \]
    where $F_c$ is the free group on $c$ letters.

    It remains to show that $F_c(\mc{V}_{c,m})$ is a finite group; then $F_c(\mc{V}_{c,m}) \in \mc{N}_{c,m}$ and generates $\mc{N}_{c,m}$ as a formation.
    Indeed, we observe that the lower central series quotients of $F_c(\mc{V}_{c,m})$ are abelian groups of exponent dividing $2^m$, and the $i$-th such quotient is generated by the weight $i$ commutators of the basis of $F_c$,  which for every $i \in \Z_{\ge 1}$ is a finite generating set.
    Since finitely generated torsion abelian groups are finite, this proves that $F_c(\mc{V}_{c,m})$ admits a $c$-step filtration all of whose quotients are finite, therefore $F_c(\mc{V}_{c,m})$ is finite.
\end{proof}

\begin{lemma}
    If $\mc{L}$ is a finitely generated formation (also called a \emph{level}) of finite groups, and $X$ is a small profinite group, then the pro-$\mc{L}$-completion $X^{\mc{L}}$ is finite.
\end{lemma}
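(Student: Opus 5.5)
The plan is to reduce to the statement that only finitely many isomorphism classes of groups in $\mc{L}$ are generated by a bounded number of elements. From that, the pro-$\mc{L}$ completion of a small profinite group will be forced to be finite.

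First we fix the meaning of smallness and record the basic object. Recall that $X$ being \emph{small} means that for every $n$ it has only finitely many open subgroups of index $n$. Equivalently, for every finite group $H$ the set $\mathrm{Hom}_{\mathrm{cts}}(X,H)$ is finite. By definition we have
\[
X^{\mc{L}} = \varprojlim_{N} X/N,
\]
where $N$ runs over the open normal subgroups of $X$ with $X/N \in \mc{L}$. Since $\mc{L}$ is a formation, this index set is closed under finite intersections, because $X/(N_1\cap N_2)$ embeds as a subdirect product in $X/N_1 \times X/N_2$.

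The key step uses that $\mc{L}$ is finitely generated. So $\mc{L}$ is the smallest formation containing some finite list of finite groups, and replacing the list by its direct product we may assume it is a single finite group $F$. Every group in $\mc{L}$ is then a quotient of a subdirect product of finitely many copies of $F$. Let $H \in \mc{L}$ be such a quotient, with $H$ a quotient of $S \le F^k$ surjecting onto each factor. By the standard argument, $S$ embeds into $F^{|\Hom(S,F)|}$; more usefully, $S$ embeds into $\prod_{\psi} F$, where $\psi$ ranges over the distinct coordinate projections. Hence we may take $k \le |\Hom(S,F)|$.

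A cleaner version of this argument is the one I would actually use. Suppose $H$ is a continuous quotient of $X$ lying in $\mc{L}$. I claim that $X \twoheadrightarrow H$ factors through
\[
X_F := X \Big/ \bigcap_{\psi \in \Hom_{\mathrm{cts}}(X,F)} \ker \psi.
\]
The group $X_F$ is finite, since it embeds in $F^{\Hom(X,F)}$ and smallness makes this hom-set finite. Granting the claim, every $\mc{L}$-quotient of $X$ is a quotient of $X_F$, so the inverse system has a maximal element and $X^{\mc{L}}$ is a quotient of the finite group $X_F$. The claim itself amounts to showing that the set of groups $Y$ such that every map $X \twoheadrightarrow Y$ kills $\bigcap_\psi \ker\psi$ forms a formation containing $F$. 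Here one must be careful, because "quotient" and "subdirect product" are taken abstractly, not among quotients of $X$.

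The main obstacle is this closure argument. Quotients are harmless. Subdirect products need care: if $X \twoheadrightarrow Y \le Y_1\times Y_2$ is subdirect, then composing with the projections gives surjections $X \twoheadrightarrow Y_i$. Each of these kills $\bigcap\ker\psi$ by hypothesis, so their product kills it too. Thus the class is closed under subdirect products. It contains $F$ trivially, and it contains every subgroup $F' \le F$ relevant to subdirect products through the same coordinate argument. If the definition of formation used in \cite{SawinWoodMomentMethod} requires closure under subgroups as well (making $\mc{L}$ a variety-like class), I would instead run the argument with $\bigcap_{F' \le F}\bigcap_{\psi\in\Hom(X,F')}\ker\psi$. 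This intersection is still finite-index by smallness, and then the same closure argument goes through.
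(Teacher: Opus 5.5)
\textbf{There is a genuine gap: the central claim is false.} You claim that every continuous surjection $X \twoheadrightarrow H$ with $H \in \mathcal{L}$ factors through $X_F$. The faulty step is ``quotients are harmless.'' If $Y$ lies in your class and $Y \twoheadrightarrow Y'$, a surjection $X \twoheadrightarrow Y'$ need not lift to a surjection $X \twoheadrightarrow Y$. So the hypothesis on $Y$ tells you nothing about $Y'$.

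Here is a concrete counterexample. Let $F = \mathrm{SL}_2(\F_5)$ and $X = A_5 \cong F/\{\pm 1\}$, which is finite and hence small. Since $A_5 \in \mathcal{L}$, we have $X^{\mathcal{L}} = A_5$. But $A_5$ is simple and does not embed in $\mathrm{SL}_2(\F_5)$, which has a unique involution. So every homomorphism $X \to F$ is trivial and $X_F = 1$. Note that $F$ belongs to your class only vacuously, while its quotient $A_5$ does not.

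Your fallback of intersecting over subgroups $F' \le F$ changes nothing, because $\Hom(X,F') \subseteq \Hom(X,F)$. Subgroup closure was never the issue, and formations are not required to be subgroup-closed.

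\textbf{No choice of generator rescues this strategy.} Suppose $\mathcal{L}$ contains monolithic groups (groups with a unique minimal normal subgroup) of unbounded order; for instance, $\mathcal{N}_{2,2}$ contains every extraspecial $2$-group. Take $X \in \mathcal{L}$ monolithic with $|X| > |F|$. Every homomorphism $X \to F$ has nontrivial kernel, hence kills the minimal normal subgroup, so $X_F \neq X = X^{\mathcal{L}}$.

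The missing idea is to use smallness to \emph{bound the order} of the $\mathcal{L}$-quotients of $X$, rather than to exhibit one finite quotient through which they all factor. The paper does not do this by hand; it invokes \cite[Lem.~5.7]{SawinWoodMomentMethod} for the diamond category of finite groups.

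If $X$ is topologically $d$-generated, as $\Gamma_{\Q_2}$ is, a direct proof is short. Each $\mathcal{L}$-quotient is a $d$-generated group in the variety generated by $F$. It is therefore a quotient of the rank-$d$ relatively free group, whose order is at most $|F|^{|F|^d}$. Smallness then leaves only finitely many such quotients.

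For a general small $X$ you need more structure. One route: for each chief factor $A$ of $H \in \mathcal{L}$, the group $H/C_H(A)$ is a quotient of $F$, which can be read off from a chief series of a subdirect product $S \le F^k$. Smallness then bounds $H$ modulo a nilpotent normal subgroup. The Sylow subgroups of that nilpotent subgroup are quotients of a fixed open subgroup of $X$, so they can be bounded by counting generators in the same way.
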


\begin{proof}
    This immediately follows from \cite[Lem.\ 5.7]{SawinWoodMomentMethod} applied to the diamond category of finite groups \cite[Lem.\ 6.10]{SawinWoodMomentMethod}.
\end{proof}

\begin{corollary}
    \label{cor:maximal nilpotent quotients are finite}
    For every $2$-adic local field $k$ and $c,m \in \Z_{\ge 0}$, the group $\mc{G}_{c,m}(k)$ is finite.
\end{corollary}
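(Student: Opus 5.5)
The plan is to combine \cref{lem:finitely generated formation} with the preceding lemma, applied to $X = \Gamma_k$ and $\mc{L} = \mc{N}_{c,m}$. By definition, $\mc{G}_{c,m}(k)$ is the pro-$\mc{N}_{c,m}$-completion of $\Gamma_k$. By \cref{lem:finitely generated formation}, $\mc{N}_{c,m}$ is a finitely generated formation, i.e.\ a level. So the preceding lemma gives finiteness as soon as $\Gamma_k$ is known to be small.

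The only thing to check is therefore that $\Gamma_k$ is a small profinite group, meaning it has only finitely many open subgroups of each index. For a $2$-adic local field $k$, the open subgroups of index $n$ in $\Gamma_k$ correspond to separable extensions of $k$ of degree $n$ inside $\kbar$. A standard consequence of Krasner's lemma is that a local field has only finitely many extensions of a given degree. Equivalently, one can cite the classical fact that $\Gamma_k$ is topologically finitely generated, which for $k/\Q_2$ of degree $d$ follows from Jannsen--Wingberg or from the explicit finite generation of the Galois group of a $p$-adic field. Either way $\Gamma_k$ is small.

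There is no genuine obstacle here; the only care needed is to match the notion of ``small'' used in \cite{SawinWoodMomentMethod} with finiteness of open subgroups of each index (or topological finite generation), and I would simply quote the finiteness of extensions of bounded degree of a $p$-adic field. Alternatively one could avoid the level machinery entirely. Since $\Gamma_k$ is topologically finitely generated, say by $r$ elements, every finite quotient of $\Gamma_k$ in $\mc{N}_{c,m}$ is a quotient of the finite relatively free group $F_r(\mc{V}_{c,m})$. Its finiteness follows exactly as in the proof of \cref{lem:finitely generated formation}, with $c$ generators replaced by $r$. Hence these quotients have uniformly bounded order, and so the inverse limit $\mc{G}_{c,m}(k)$ is finite.
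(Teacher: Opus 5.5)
Your proposal is correct and is essentially the paper's argument: the paper also combines \cref{lem:finitely generated formation} with the Sawin--Wood lemma on pro-$\mc{L}$-completions of small profinite groups, and it simply cites the smallness of $\Gamma_k$ as well known, which you justify via Krasner's lemma (the safer citation here, since Jannsen--Wingberg treat only odd $p$). Your alternative route, bounding every $\mc{N}_{c,m}$-quotient by the finite relatively free group $F_r(\mc{V}_{c,m})$, is also valid and avoids the level machinery; it only needs the maximal pro-$2$ quotient of $\Gamma_k$ to be finitely generated, which follows from $\dim_{\F_2} H^1(k,\F_2) < \infty$.
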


\begin{proof}
    We use \cref{lem:finitely generated formation} and the well-known fact that the absolute Galois group of a local field is a small profinite group.
\end{proof}

\subsection{Reduction to computing $\mc{G}_{c,m}(\Q_2)$}

Let $c,m \in \Z_{\ge 0}$ and $G \in \mc{N}_{c,m}$.
Then every continuous homomorphism
\[
\Gamma_{\Q_2} \to G
\]
factors through $\mc{G}_{c,m}(\Q_2)$ by definition as the pro-$\mc{N}_{c,m}$-completion of $\Gamma_{\Q_2}$.
Therefore the task of computing the $2$-adic mass of $G$ essentially reduces to computing the finite group $\mc{G}_{c,m}(\Q_2)$ (see \cref{cor:maximal nilpotent quotients are finite}) and its \textit{(lower numbering) ramification filtration}
\[
\mc{G}_{c,m}(\Q_2) \supset \mc{G}_{c,m}(\Q_2)_0 \supset \mc{G}_{c,m}(\Q_2)_1 \supset \dotsi.
\]
Indeed, by Galois correspondence, $\mc{G}_{c,m}(\Q_2)$ is canonically identified with the automorphism group of a finite Galois extension $K_{c,m} / \Q_2$.
Thus, for all $i \in \Z_{\ge -1}$ we define the \textit{$i$-th ramification group $\mc{G}_{c,m}(\Q_2)_i$} to be $\Gal(K_{c,m}/\Q_2)_i$.

\begin{lemma}
    \label{lem:2-adic mass in terms of conductor discriminant formula}
    Let $c,m\in\Z_{\ge 0}$ and $G \in \mc{N}_{c,m}$ be viewed as a regular permutation group.
    Then
    \[
    \tau_{\disc,2}(BG) = 
    \frac{1}{|G|}
    \sum_{\varphi : \mc{G}_{c,m}(\Q_2) \to G}
    2^{-d_{\rm reg}(\varphi)},
    \]
    where
    \[
    d_{\rm reg}(\varphi) := 
    2
    \sum_{i \ge 0} 
    \frac{|\mc{G}_{c,m}(\Q_2)_i|}{|\mc{G}_{c,m}(\Q_2)_0|}
     \left(
        1 - \frac{1}{|\varphi(\mc{G}_{c,m}(\Q_2)_i)|}
    \right).
    \]
\end{lemma}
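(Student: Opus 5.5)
We first reduce the sum over $\Hom(\Gamma_{\Q_2},G)$ to a sum over $\Hom(\mc{G}_{c,m}(\Q_2),G)$. Every continuous $\varphi:\Gamma_{\Q_2}\to G$ with $G\in\mc{N}_{c,m}$ factors uniquely through the pro-$\mc{N}_{c,m}$-completion $\mc{G}_{c,m}(\Q_2)=\Gal(K_{c,m}/\Q_2)$. This gives a bijection between the two index sets. It therefore suffices to show that, for $\varphi:\mc{G}:=\mc{G}_{c,m}(\Q_2)\to G$, the discriminant exponent $v_2(\disc(\varphi))$ of the $G$-étale algebra attached to $\varphi$ (with $G$ in its regular representation) equals $d_{\rm reg}(\varphi)\cdot\frac{|G|}{2}$. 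Since $a(G)^{-1}=|G|/2$ for a regular $2$-group, this yields $|\disc(\varphi)|^{-a(G)}=2^{-d_{\rm reg}(\varphi)}$, which is the claimed formula.

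Next we identify the algebra. Let $H=\varphi(\mc{G})\le G$ and let $L=K_{c,m}^{\ker\varphi}$, so that $\Gal(L/\Q_2)\cong H$. The $G$-étale algebra corresponding to $\varphi$ in the regular representation is $\Q_2[G]$ with the twisted action, namely $\prod_{G/H}L$, a product of $[G:H]$ copies of $L$. Hence
\[
v_2(\disc) = [G:H]\, v_2(\disc(L/\Q_2)).
\]
To compute $v_2(\disc(L/\Q_2))$ we use Hilbert's different formula. The lower ramification groups of $L/\Q_2$ are the images of the lower ramification groups of $K_{c,m}/\Q_2$ only after passing to upper numbering, so we avoid quotients. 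Instead we compute with the conductor–discriminant formula, or equivalently via the different in the tower. The cleanest route is the following: the permutation representation of $\mc{G}$ on $\mc{G}/\ker\varphi\cong H$ has Artin conductor exponent equal to $v_2(\disc(L))$. The Artin conductor of a representation $\rho$ of $\mc{G}$ is given in lower numbering of $\mc{G}$ itself by
\[
f(\rho)=\sum_{i\ge0}\frac{|\mc{G}_i|}{|\mc{G}_0|}\bigl(\dim\rho-\dim\rho^{\mc{G}_i}\bigr).
\]
This is the standard definition and requires no Herbrand function, because it is computed on the full group $\mc{G}$.

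We then apply this to $\rho=\Q[G]$ with $\mc{G}$ acting through $\varphi$ by left translation. Then $\dim\rho=|G|$, and $\rho^{\mc{G}_i}$ is spanned by the $\varphi(\mc{G}_i)$-orbit sums, so
\[
\dim\rho^{\mc{G}_i}=|G|/|\varphi(\mc{G}_i)|.
\]
The conductor of this permutation representation is the discriminant exponent of the associated étale algebra $\prod_{G/H}L$, since discriminant equals conductor of the permutation character, additively over factors. This gives
\[
v_2(\disc(\varphi))=|G|\sum_{i\ge0}\frac{|\mc{G}_i|}{|\mc{G}_0|}\Bigl(1-\frac1{|\varphi(\mc{G}_i)|}\Bigr)=\frac{|G|}{2}\,d_{\rm reg}(\varphi),
\]
as required.

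The main obstacle is justifying that the conductor computed on $\mc{G}$ equals the one computed on the actual quotient $H$ through which $\rho$ factors, i.e.\ that the conductor is inflation-invariant. This is exactly the compatibility of upper numbering with quotients (Herbrand's theorem), and we would cite Serre's \emph{Local Fields} for it, together with the conductor–discriminant formula for the permutation representation.
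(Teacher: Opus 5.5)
Your proposal is correct and follows the paper's own route: you identify $\frac{|G|}{2}\,d_{\rm reg}(\varphi)$ with the discriminant exponent of the $G$-\'etale algebra attached to $\varphi$ via the conductor--discriminant formula for the regular permutation character, and you use $a(G)=2/|G|$. You also spell out details the paper leaves implicit, namely the unique factorization through $\mc{G}_{c,m}(\Q_2)$, the count $\dim\rho^{\mc{G}_i}=|G|/|\varphi(\mc{G}_i)|$, and the inflation-invariance of the Artin conductor.
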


\begin{proof}
    Note that $d_{\rm reg}(\varphi)$ should be defined to equal $\frac{2}{|G|}$ times the discriminant exponent of the Galois $G$-structured \'etale algebra associated to $\varphi$.
    The exact formula for $d_{\rm reg}(\varphi)$ therefore follows from the conductor-discriminant formula \cite[VII.11.9]{NeukirchANT}.
\end{proof}

\subsection{Computing $\mc{G}_{2,2}(\Q_2)$}

Since $\Heis_4 \in \mc{N}_{2,2}$, we focus on explicitly computing $\mc{G}_{2,2}(\Q_2)$ and its ramification filtration.
We do this by writing down an exact sequence for $\mc{G}_{2,2}(\Q_2)$ in terms of $\mc{G}_{1,1}$ groups, applying Kummer theory to compute these smaller groups and their ramification filtrations, and then combining the ramification filtrations to obtain the complete information about the ramification filtration on $\mc{G}_{2,2}(\Q_2)$.

\begin{notation}
    If $X$ is a profinite group and $S_1,\ldots,S_k \subset X$ are subsets with $k\ge 2$, we let $S_1 \dotsi S_k \le X$ denote the topological closure of the subgroup generated by $S_1 \cup \dotsi \cup S_k$.
    If $q \in \Z_{\ge 0}$ and $S,T \subset X$ are subsets, we let $S^q$ denote the image of the $q$-th power map $S \to X$ and $[S,T]$ denote the image of the commutator map $S \times T \to X$, $(s,t) \mapsto sts^{-1}t^{-1}$.
\end{notation}

\begin{lemma}
    \label{lem:identities in N22}
    Let $X$ be a pro-$2$-group that has nilpotency class $\le 2$ and exponent $\le 4$.
    For all $x,y \in X$, the elements $x^2,[x,y]$ have order $\le 2$ and are central.
\end{lemma}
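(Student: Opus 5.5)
We argue directly from the two defining identities of $X$, namely that every element satisfies $x^4 = 1$ and every commutator $[x,y]$ is central, since class $\le 2$ means $[X,X] \subset Z(X)$.

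\emph{Commutators.} Centrality of $[x,y]$ is immediate from class $\le 2$. For the order, we use that in a class $2$ group the commutator map is bimultiplicative:
\[
[x,yz] = [x,y]\,y[x,z]y^{-1} = [x,y][x,z].
\]
This bimultiplicativity follows from the standard expansion together with centrality of $[x,z]$. Iterating gives $[x,y]^2 = [x,y^2]$. We also have the identity $(xy)^2 = x^2y^2[y,x]$ (up to the paper's commutator convention), which again uses centrality of commutators.

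\emph{Squares.} Since $x^4=1$, we get $(x^2)^2 = 1$, so $x^2$ has order $\le 2$. Centrality of $x^2$ is the main point. We compute $[x^2,y] = [x,y]^2$ by bimultiplicativity in the first variable, so it suffices to show $[x,y]^2 = 1$.

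\emph{The key step.} To show $[x,y]^2 = 1$, we expand $(xy)^4 = 1$ using the identity above. From $(xy)^2 = x^2y^2c$ with $c$ a central commutator, squaring gives
\[
(xy)^4 = (x^2y^2)^2 c^2 = x^4 y^4 [y^2,x^2] c^2 = [y^2,x^2]\,c^2.
\]
By bimultiplicativity, $[y^2,x^2] = [y,x]^4$, and $c^2 = [y,x]^2$. Hence $1 = [y,x]^6$. Combining this with $[y,x]^4 = [y^4,x] = 1$ yields $[y,x]^2 = 1$.

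Therefore $[x,y]^2 = 1$, so $[x^2,y] = 1$ and $x^2$ is central. Finally, all of these identities are identities among finitely many elements, so they hold in the profinite setting as well.
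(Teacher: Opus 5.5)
Your proof is correct and is essentially the paper's argument: expanding $(xy)^4=1$ in class $2$ gives $[x,y]^6=1$, hence $[x,y]^2=1$, and then $[x^2,y]=[x,y]^2=1$ shows that $x^2$ is central. The only differences are that you derive the class-$2$ commutator identities by hand instead of quoting them from Neukirch--Schmidt--Wingberg, and that you make explicit the step from $[x,y]^6=1$ to $[x,y]^2=1$, which the paper leaves implicit (it also follows at once by applying the exponent bound to the element $[x,y]$ itself).
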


\begin{proof}
    It follows from \cite[Prop.\ 3.8.3]{NeukirchSchmidtWingberg} that for all $x,y,z \in X$ one has
    \[
    (xy)^4 = x^4y^4[x,y]^{\binom{4}{2}}, \qquad 
    [x^2,z] = [x,z]^2.
    \]
    The first identity implies that $[x,y]^2 = 1$, hence $[x,y]$ has order $\le 2$; in other words, all commutators have order $\le 2$ (and are central because $\gamma_3(X) = 1$).
    The second identity now implies that $[x^2,z] = 1$, therefore $x^2$ is central (and order $\le 2$ because $X$ has exponent $\le 4$).
\end{proof}

\begin{definition}
    For a pro-$2$-group $X$, we define the \textit{lower $2$-central series} $(X_n)_{n=1}^{\infty}$ via
    \[
    X_1 := X, \quad X_{n+1} := X_{n}^2 [X_n,X].
    \]
    We also define the \textit{weight $n$ commutator subgroups} $(\gamma_n(X))_{n=1}^{\infty}$ via
    \[
    \gamma_1(X) := X, \quad 
    \gamma_{n+1}(X) := [\gamma_n(X),X].
    \]
\end{definition}

\begin{lemma}
    \label{lem:first few lower 2-central series subgroups}
    Let $X$ be a pro-$2$-group.
    Then one has
    \[
    X_2 = X^2 \gamma_2(X), \qquad 
    X_3 = X^4 \gamma_3(X).
    \]
\end{lemma}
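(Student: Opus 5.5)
We prove the two identities $X_2 = X^2\gamma_2(X)$ and $X_3 = X^4\gamma_3(X)$ directly from the definition of the lower $2$-central series, working first with abstract subgroups and then taking topological closures. The first identity is immediate: by definition $X_2 = X_1^2[X_1,X] = X^2[X,X]$, and the closed subgroup generated by $[X,X]$ is $\gamma_2(X)$. So $X^2[X,X]$ and $X^2\gamma_2(X)$ are the closures of the subgroups generated by the same sets.

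For the second identity we first prove the inclusion $X^4\gamma_3(X) \subset X_3$. We have $x^4 = (x^2)^2 \in X_2^2 \subset X_3$. Moreover, every commutator $[c,z]$ with $c \in \gamma_2(X) \subset X_2$ and $z \in X$ lies in $[X_2,X] \subset X_3$. Since $X_3$ is a closed subgroup, it contains the closed subgroup generated by these elements, namely $X^4\gamma_3(X)$.

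For the reverse inclusion $X_3 \subset X^4\gamma_3(X)$, set $N := X^4\gamma_3(X)$. This is a closed normal (indeed characteristic) subgroup, and it suffices to show that the image of $X_3$ in $\overline{X} := X/N$ is trivial. The quotient $\overline{X}$ is a pro-$2$ group of nilpotency class $\le 2$ and exponent $\le 4$. By \cref{lem:identities in N22}, every square $\bar x^2$ and every commutator $[\bar x,\bar y]$ in $\overline{X}$ is central of order $\le 2$. Hence $\overline{X_2}$ is topologically generated by central elements of order $\le 2$, so it is an elementary abelian central subgroup of $\overline{X}$. It follows that $\overline{X_2}^{\,2} = 1$ and $[\overline{X_2},\overline{X}] = 1$. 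Because $X_3$ is the closure of the subgroup generated by $X_2^2 \cup [X_2,X]$, the image of $X_3$ in $\overline{X}$ is trivial, and so $X_3 \subset N$.

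The one step needing care is the topological bookkeeping: images of closed subgroups under the continuous quotient map $X \to \overline{X}$ are closed (by compactness), and the closure of a subgroup generated by a set maps onto the closure of the subgroup generated by the image of that set. The group theory itself reduces entirely to \cref{lem:identities in N22}.
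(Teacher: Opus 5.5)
Your proposal is correct and follows the paper's proof. The first identity comes from the definition, the inclusion $X^4\gamma_3(X) \subset X_3$ is checked directly, and the reverse inclusion follows by passing to $X/X^4\gamma_3(X)$ and applying \cref{lem:identities in N22} to see that the image of $X_2$ is central of exponent $\le 2$. Your extra bookkeeping on closures and images under the quotient map makes explicit that the lower $2$-central series is compatible with quotients, which the paper uses without comment.
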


\begin{proof}
    The first identity is immediate from the definition of $X_2$.
    For the second identity, we note that $X_3$ contains fourth powers and triple commutators, therefore $X_3 \supset X^4 \gamma_3(X)$.
    After quotienting by the normal subgroup $X^4 \gamma_3(X)$, we may reduce to the case that $X$ has nilpotency class $\le 2$ and exponent $\le 4$, where we would like to show that $X_3$ is trivial.

    Thus, let $X$ be a pro-$2$-group of nilpotency class $\le 2$ and exponent $\le 4$.
    Applying \cref{lem:identities in N22} to $X$ shows that $X_2$ is generated by central elements of order $\le 2$.
    In particular, it is immediately clear that $X_2^2 = [X_2,X] = 1$ and therefore $X_3 = 1$.
\end{proof}

\begin{proposition}
    \label{prop:G22 computed by G11's}
    There is a canonical central extension of finite $\Gamma_{\Q_2}$-groups
    \[
    1 \to \mc{G}_{1,1}(K_{1,1})_{\Gamma_{\Q_2}} \to \mc{G}_{2,2}(\Q_2) \to \mc{G}_{1,1}(\Q_2) \to 1
    \]
    where both $\mc{G}_{1,1}(K_{1,1})$ and $\mc{G}_{1,1}(\Q_2)$ are finite-dimensional $\F_2$-vector spaces.
    Moreover, $\mc{G}_{1,1}(K_{1,1})_{\Gamma_{\Q_2}} \le \mc{G}_{2,2}(\Q_2)$ is equal to the subgroup generated by the squares and commutators of lifts of elements of $\mc{G}_{1,1}(\Q_2)$ to $\mc{G}_{2,2}(\Q_2)$.
\end{proposition}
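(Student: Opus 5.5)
We will work inside the finite group $X := \mc{G}_{2,2}(\Q_2)$ (finite by \cref{cor:maximal nilpotent quotients are finite}). We will identify its Frattini-type quotient with $\mc{G}_{1,1}(\Q_2)$, and then identify the kernel with the coinvariants of $\mc{G}_{1,1}(K_{1,1})$ via a Galois-theoretic universal property.

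\textbf{Step 1: the quotient.} Set $X_2 = X^2\gamma_2(X)$, using \cref{lem:first few lower 2-central series subgroups}. The group $X/X_2$ is elementary abelian, and every elementary abelian $2$-quotient of $\Gamma_{\Q_2}$ factors through $X$. It follows that $X/X_2$ is the pro-$\mc{N}_{1,1}$-completion of $\Gamma_{\Q_2}$, that is, $X/X_2 \cong \mc{G}_{1,1}(\Q_2)$. Under Galois correspondence $X_2$ becomes $\Gal(K_{2,2}/K_{1,1})$. By \cref{lem:identities in N22}, $X_2$ is generated by central elements of order $\le 2$, so $X_2$ is a central elementary abelian subgroup. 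It is generated by the squares and commutators of lifts, which is exactly the last assertion. This gives a central extension $1\to X_2\to X\to \mc{G}_{1,1}(\Q_2)\to 1$.

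\textbf{Step 2: identify $X_2$.} Let $M := \mc{G}_{1,1}(K_{1,1}) = \Gal(L/K_{1,1})$, where $L$ is the compositum of all quadratic extensions of $K_{1,1}$. Since $L/\Q_2$ is Galois, $\Gamma_{\Q_2}$ acts on $M$ through $\Gal(K_{1,1}/\Q_2)$. Consider the extension $1\to M\to \Gal(L/\Q_2)\to \mc{G}_{1,1}(\Q_2)\to 1$. Let $N\le M$ be the smallest subgroup such that the image of $M$ in $\Gal(L/\Q_2)/N$ is central; this $N$ is $[M,\Gal(L/\Q_2)]$, which is the augmentation submodule, so $M/N = M_{\Gamma_{\Q_2}}$. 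Let $E := \Gal(L/\Q_2)/N$, a central extension of an elementary abelian $2$-group by an elementary abelian $2$-group.

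We then check that $E \in \mc{N}_{2,2}$. It has class $\le 2$ because $[E,E]$ lies in the central subgroup $M/N$. It has exponent $\le 4$ because squares lie in $M/N$, which has exponent $2$. Hence $E$ is a quotient of $X$ over $\mc{G}_{1,1}(\Q_2)$.

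Conversely, $X$ is a quotient of $E$. The field $K_{2,2}$ satisfies $K_{2,2}\supseteq K_{1,1}$, and $\Gal(K_{2,2}/K_{1,1}) = X_2$ is elementary abelian, so $K_{2,2}\subseteq L$. The kernel of $\Gal(L/\Q_2)\to X$ contains $N$, because $X_2$ is central in $X$. Comparing the two surjections $E\twoheadrightarrow X$ and $X\twoheadrightarrow E$ between finite groups gives $E\cong X$, with $X_2\cong M_{\Gamma_{\Q_2}}$. Canonicity and the $\Gamma_{\Q_2}$-group structure follow from functoriality: $\Gamma_{\Q_2}$ acts on $X$ by conjugation through the quotient map, and this action is trivial on $M_{\Gamma_{\Q_2}}$.

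\textbf{Finiteness and the main obstacle.} Both $\mc{G}_{1,1}(K_{1,1})$ and $\mc{G}_{1,1}(\Q_2)$ are finite-dimensional by Kummer theory, since $K^\times/K^{\times2}$ is finite for a $2$-adic local field $K$. The main obstacle is the bookkeeping in Step 2. One must check that $[M,\Gal(L/\Q_2)]$ equals the augmentation submodule $I\cdot M$, which holds because conjugation on the abelian normal subgroup $M$ is the module action. One must also keep the two universal properties honestly aligned, and nothing beyond this needs real care.
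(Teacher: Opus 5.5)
Your proof is correct and is essentially the paper's argument: your $E=\Gal(L/\Q_2)/[M,\Gal(L/\Q_2)]$ is exactly the paper's $X/X_3$, where the paper's $X$ is the pro-$2$ completion of $\Gamma_{\Q_2}$. Likewise, your identification $M/[M,\Gal(L/\Q_2)]=M_{\Gamma_{\Q_2}}$ is the paper's computation $X_2/X_2^2[X_2,X_2][X_2,X]=X_2/X_3$. The only difference is packaging: the paper identifies $\mc{G}_{2,2}(\Q_2)$ with $X/X_3$ via $X_3=X^4\gamma_3(X)$, whereas you stay at finite level and compare two surjections, which also makes explicit a point the paper's Galois-correspondence step leaves implicit, namely that every quadratic extension of $K_{1,1}$ lies in the maximal pro-$2$ extension of $\Q_2$.
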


\begin{proof}
    The finiteness statements follow from \cref{cor:maximal nilpotent quotients are finite}.

    We let $X$ denote the pro-$2$-completion of $\Gamma_{\Q_2}$.
    Then \cref{lem:first few lower 2-central series subgroups} implies that there are canonical isomorphisms
    \[
    \mc{G}_{1,1}(\Q_2) \cong X/X_2, \qquad 
    \mc{G}_{2,2}(\Q_2) \cong X/X_3, \qquad 
    \ker(\mc{G}_{2,2}(\Q_2) \twoheadrightarrow \mc{G}_{1,1}(\Q_2)) \cong X_2/X_3.
    \]
    We write
    \[
    X_2/X_3 = 
    \frac{X_2}{X_2^2[X_2,X]}.
    \]
    
    By definition of the lower $2$-central series $(X_n)_{n=1}^{\infty}$, it is clear that $X/X_3$ is the maximal central extension of $X/X_2$ by a group that is abelian and exponent $\le 2$.
    On the other hand, Galois correspondence says that $\mc{G}_{1,1}(K_{1,1})$ corresponds to the maximal extension of $K_{1,1}$ that is abelian and exponent $\le 2$, in other words
    \[
    \mc{G}_{1,1}(K_{1,1}) \cong \frac{X_2}{X_2^2 [X_2,X_2]}.
    \]
    Since $\mc{G}_{1,1}(K_{1,1})$ as a $\Gamma_{\Q_2}$-module is inflated from its $X$-module structure, we can compute the coinvariants of $\mc{G}_{1,1}(K_{1,1})$ as
    \[
    \mc{G}_{1,1}(K_{1,1})_{\Gamma_{\Q_2}} = 
     \mc{G}_{1,1}(K_{1,1})_{X}\cong 
    \frac{X_2}{X_2^2[X_2,X_2][X_2,X]} = X_2/X_3.
    \]
    Since $X_2$ is generated by the squares and commutators of $X$, it follows that $\mc{G}_{1,1}(K_{1,1})_{\Gamma_{\Q_2}} \cong X_2/X_3$ is generated by the squares and commutators of lifts of elements of $\mc{G}_{1,1}(\Q_2) \cong X/X_2$ to $\mc{G}_{2,2}(\Q_2) \cong X/X_3$.
\end{proof}

\subsection{The ramification filtration on $\mc{G}_{2,2}(\Q_2)$}
\label{subsec:ramification filtration on mcG22}

Using Kummer theory, we write
\[
K_{1,1} = \Q_2(\sqrt{-1},\sqrt{5},\sqrt{2}), \qquad
\mc{G}_{1,1}(\Q_2) \cong \F_2^3 = \langle \bar{\sigma}_{-1}, \bar{\sigma}_{5}, \bar{\sigma}_{2}\rangle,
\]
where $\bar{\sigma}_{-1}, \bar{\sigma}_{5}, \bar{\sigma}_{2}$ form a dual basis for $-1,5,2 \in \Q_2^\times/\Q_2^{\times 2}$.
Although a choice of lifts $\sigma_{-1}, \sigma_{5}, \sigma_{2} \in \mc{G}_{2,2}(\Q_2)$ is noncanonical, the lifted squares $\sigma_{i}^2$ and commutators $[\sigma_i,\sigma_j]$, which lie in $\mc{G}_{1,1}(K_{1,1})_{\Gamma_{\Q_2}}$, are in fact canonically well-defined for all $i,j \in \{-1,5,2\}$ and span $\mc{G}_{1,1}(K_{1,1})_{\Gamma_{\Q_2}}$ as an $\F_2$-vector space.

\begin{lemma}
    \label{lem:five dimensional top coinvariant Kummer layer}
    $\mc{G}_{1,1}(K_{1,1})_{\Gamma_{\Q_2}}$ is canonically presented as a five-dimensional $\F_2$-vector space given by
    \[
    \mc{G}_{1,1}(K_{1,1})_{\Gamma_{\Q_2}} =
    \F_2\langle \forall i,j \in \{-1,5,2\}:  \sigma_i^2, [\sigma_i,\sigma_j] \mid \sigma_{-1}^2 = [\sigma_{5},\sigma_{2}]
    \rangle .
    \]
\end{lemma}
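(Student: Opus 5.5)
The plan is to identify $\mc{G}_{1,1}(K_{1,1})_{\Gamma_{\Q_2}}$ with $X_2/X_3$, as in the proof of \cref{prop:G22 computed by G11's}. Here $X$ is the maximal pro-$2$ quotient of $\Gamma_{\Q_2}$, and we present $X$ as a one-relator pro-$2$ group.

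\textbf{Presentation of $X$.} By the theorem of Demushkin--Serre--Labute, $X$ is a Demushkin group of rank $3=[\Q_2:\Q_2]+2$ with $\dim H^2(X,\F_2)=1$. So $X=F/\langle\!\langle r\rangle\!\rangle$, where $F$ is free pro-$2$ on lifts $\sigma_{-1},\sigma_5,\sigma_2$ of the dual basis $\bar\sigma_{-1},\bar\sigma_5,\bar\sigma_2$, and $r\in F_2$. Since $F/F_2\cong X/X_2$, the lower $2$-central series satisfies $X_n=F_nN/N$ with $N=\langle\!\langle r\rangle\!\rangle$. Moreover $N\subset F_2$ and $[N,F]N^2\subset F_3$, so
\[
X_2/X_3\cong (F_2/F_3)\big/\langle \bar r\rangle.
\]

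\textbf{The free quotient.} For the free group, $F_2/F_3$ is the $\F_2$-vector space with basis the three squares $\sigma_i^2$ and the three commutators $[\sigma_i,\sigma_j]$ for $i<j$. This follows from \cref{lem:first few lower 2-central series subgroups} together with the standard structure of the free class-$2$, exponent-$4$ group on three generators, or equivalently from $H^2$ of $\F_2^3$ having dimension $6$. Because squares and commutators in $X/X_3$ depend only on the images in $X/X_2$ (\cref{lem:identities in N22}), these six elements are canonical. The quotient by $\bar r$ is therefore $5$-dimensional, provided that $\bar r\neq 0$. This holds because the cup product $H^1\times H^1\to H^2$ is nondegenerate, so $r\notin F_3$.

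\textbf{Identifying $\bar r$.} This is the main step. I would use the standard dictionary (Labute/Koch) between the image of $r$ in $F_2/F_3$ and cup products in $H^2(X,\F_2)\cong\F_2$. Write
\[
\bar r=\sum_i c_i\,\sigma_i^2+\sum_{i<j}c_{ij}[\sigma_i,\sigma_j].
\]
Then $\chi_i\cup\chi_j$ for $i\ne j$ evaluates to $c_{ij}$, and $\chi_i\cup\chi_i=\mathrm{Bock}(\chi_i)$ evaluates to $c_i$. Via Kummer theory these cup products are the Hilbert symbols $(a,b)_2$ for $a,b\in\{-1,5,2\}$. The symbols are:
\begin{itemize}
\item $(-1,-1)_2=-1$;
\item $(5,5)_2=(5,-1)_2=1$;
\item $(2,2)_2=(2,-1)_2=1$;
\item $(-1,5)_2=(-1,2)_2=1$;
\item $(5,2)_2=-1$.
\end{itemize}
Hence $\bar r=\sigma_{-1}^2+[\sigma_5,\sigma_2]$, which gives the single relation $\sigma_{-1}^2=[\sigma_5,\sigma_2]$. (As a consistency check, this also agrees with Labute's normal form $x_1^2x_2^{4}[x_2,x_3]$ for $\Q_2$ reduced modulo $F_3\supset F^4$.)

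The main obstacle is getting the signs and normalization of the cup-product/relation dictionary right, especially the Bockstein identification of the square coefficients for $p=2$. Since everything here is mod $2$, signs are harmless, and only the correct pairing of $\chi_i\cup\chi_i$ with the $\sigma_i^2$-coefficient needs care.
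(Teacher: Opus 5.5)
Your proposal is correct and takes essentially the same route as the paper. Both arguments use the one-relator Demushkin presentation of the maximal pro-$2$ quotient $X$, read off the cup-product dictionary for the relation modulo $F_3$ from the $2$-adic Hilbert symbol table to get $\bar r = \sigma_{-1}^2 + [\sigma_5,\sigma_2]$, and identify $\mc{G}_{1,1}(K_{1,1})_{\Gamma_{\Q_2}} \cong X_2/X_3$. You also spell out steps the paper leaves implicit: $X_2/X_3 \cong (F_2/F_3)/\langle \bar r\rangle$, $\dim_{\F_2} F_2/F_3 = 6$, and the canonicity of the lifted squares and commutators.
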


\begin{proof}
    This follows from the theory of Demushkin groups \cite[Def.\ 3.9.9]{NeukirchSchmidtWingberg}.
    We let $X$ denote the pro-$2$-completion of $\Gamma_{\Q_2}$.
    By \cite[Propositions 3.9.1 and 3.9.5]{NeukirchSchmidtWingberg} and the fact that $H^1(\Q_2,\F_2)$ and $H^2(\Q_2,\F_2)$ are $\F_2$-vector spaces of dimensions $3$ and $1$, respectively, we have a minimal presentation
    \[
    1 \to R \to F \to X \to 1
    \]
    where $F$ is a free pro-$2$-group of rank $3$ mapping its generators $x_{-1},x_{5},x_{2}$ to some choice of lifts $\sigma_{-1}, \sigma_{5}, \sigma_{2} \in X$, and $R$ is a free pro-$2$-group of rank $1$.
    Applying \cite[Prop.\ 3.9.13(ii)]{NeukirchSchmidtWingberg} shows that $R$ is generated by elements of $F$ of the form
    \[
    x_{-1}^{2 a} x_{5}^{2b} x_{2}^{2c} [x_{-1},x_{5}]^{d} [x_{-1},x_{2}]^{e} [x_{5},x_{2}]^{f} \text{ modulo } F_3,
    \]
    where the powers $a,b,c,d,e,f \in \F_2$ are determined by the cup product on $H^1(\Q_2,\F_2)$, or equivalently the table of Hilbert symbols:
    \[
    \begin{array}{c|ccc}
    (\cdot,\cdot)_2 & -1 & 5 & 2 \\
    \hline
    -1 & -1 & 1 & 1 \\
    5  & 1  & 1 & -1 \\
    2  & 1  & -1 & 1
    \end{array}.
    \]
    The table demonstrates that $a = f = 1$ and all other powers are zero, yielding the relation $\sigma_{-1}^2 [\sigma_5,\sigma_2] = 1 \mod X_3$, or equivalently $\sigma_{-1}^2 = [\sigma_5,\sigma_2] \mod X_3$ because $X_2/X_3$ is an $\F_2$-vector space.
    This concludes the proof because \cref{lem:first few lower 2-central series subgroups,prop:G22 computed by G11's} imply that $\mc{G}_{1,1}(K_{1,1})_{\Gamma_{\Q_2}} \cong X_2/X_3$.
\end{proof}

\begin{proposition}[Ramification filtration]
\label{prop:ramification filtration}
Let $\sigma_{-1}, \sigma_{5}, \sigma_{2} \in \mc{G}_{2,2}(\Q_2)$ be arbitrary lifts of the Kummer dual basis $\bar{\sigma}_{-1}, \bar{\sigma}_{5}, \bar{\sigma}_2 \in \mc{G}_{1,1}(\Q_2)\cong (\Q_2^{\times}/\Q_2^{\times 2})^\vee$.
Then the ramification filtration on $\mc{G}_{2,2}(\Q_2)$ has lower numbering breaks at $-1,1,5,13,29$, and the corresponding ramification subgroups are
\[
\mc{G}_{2,2}(\Q_2)_i =
\begin{cases}
    \langle \sigma_{-1},\sigma_{5}, \sigma_2\rangle & i = -1, \\
    \langle \sigma_{-1}, [\sigma_{-1},\sigma_{5}], \sigma_{2}\rangle & -1 < i \le 1, \\
    \langle [\sigma_5,\sigma_2], [\sigma_{-1},\sigma_{2}], \sigma_{2}\rangle & 1 < i \le 5, \\
    \langle [\sigma_{-1},\sigma_{2}], \sigma_2^2 \rangle & 5 < i \le 13, \\
    \langle \sigma_{2}^2\rangle & 13 < i \le 29. \\
\end{cases}
\]
\end{proposition}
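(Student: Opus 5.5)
The plan is to combine Herbrand's theory with local class field theory, applied once to the bottom layer $\mc{G}_{1,1}(\Q_2)$ and once to the top layer $H := \mc{G}_{1,1}(K_{1,1})_{\Gamma_{\Q_2}} = \Gal(K_{2,2}/K_{1,1})$ from \cref{prop:G22 computed by G11's}. Write $G := \mc{G}_{2,2}(\Q_2)$.

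\emph{Bottom layer.} The quotient $G/H = \Gal(K_{1,1}/\Q_2) \cong \F_2^3$ is abelian, so its upper numbering filtration comes from the images of $U^n_{\Q_2}$ in $\Q_2^\times/\Q_2^{\times 2}$.
\begin{itemize}
\item $\Q_2(\sqrt 5)$ is unramified.
\item $\Q_2(\sqrt{-1})$ has conductor exponent $2$, so upper break $1$.
\item $\Q_2(\sqrt{2})$ and $\Q_2(\sqrt{-2})$ have conductor exponent $3$, so upper break $2$.
\end{itemize}
Hence the inertia group is $\langle \bar\sigma_{-1},\bar\sigma_2\rangle$ and the upper break-$2$ subgroup is $\langle\bar\sigma_2\rangle$. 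Converting with Herbrand's $\psi$ function gives lower breaks $-1,1,3$ for $K_{1,1}/\Q_2$.

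\emph{Top layer.} Since $H$ is a subgroup, lower numbering is compatible: $H_i = G_i \cap H$, and this is the lower filtration of the abelian extension $K_{2,2}/K_{1,1}$. By local class field theory over $K_{1,1}$, $H$ is the quotient of $K_{1,1}^\times/K_{1,1}^{\times 2}$ by the augmentation of the $\Gal(K_{1,1}/\Q_2)$-action, i.e.\ the coinvariants. I will write down explicit generators of $U^n_{K_{1,1}}$ modulo squares for $n$ up to $2e+1 = 9$; the field $K_{1,1}$ has $e=4$ and $f=2$. I will then compute the images of these unit groups in the five-dimensional coinvariant quotient, which gives the upper filtration of $H$ over $K_{1,1}$. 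Next I will identify these images with the generators $\sigma_i^2$ and $[\sigma_i,\sigma_j]$ from \cref{lem:five dimensional top coinvariant Kummer layer}. For this I will use the Artin map together with explicit Kummer generators: $\sigma_2^2$ corresponds to the cyclic quartic extension $\Q_2(\sqrt{2+\sqrt2})$-type condition, and the commutators correspond to quaternion or $D_4$ extensions. The $\psi$ function of $K_{2,2}/K_{1,1}$ then converts upper to lower breaks, and I expect to find breaks $5,13,29$ together with the intersections $G_i \cap H$ stated in the proposition. Finally, Herbrand's theorem $G_iH/H = (G/H)_{\varphi_{K_{2,2}/K_{1,1}}(i)}$ recovers the images of $G_i$ in $G/H$, consistent with the breaks $1$ and $3$ above rescaling to $1$ and $5$.

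\emph{Assembling $G_i$.} Knowing both $G_i\cap H$ and $G_iH/H$ determines $G_i$ only up to which lift of each image generator lies in $G_i$. For example, for $1<i\le 5$ one must show that every lift of $\bar\sigma_2$ differs from an element of $G_i$ by something in $\langle [\sigma_5,\sigma_2],[\sigma_{-1},\sigma_2],\sigma_2^2\rangle$. Otherwise the statement for arbitrary lifts could not hold, so part of the task is to check that the listed subgroup really is lift-independent, using the commutator identities of \cref{lem:identities in N22}. Where genuine ambiguity remains, I will detect the correct coset by pushing forward to small quotients of $G$: the abelian quotient $\Q_2^\times/\Q_2^{\times4}$, whose upper filtration is known from class field theory, and $D_4$ or $Q_8$ quotients, whose ramification is classical. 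The claimed group must be the unique subgroup with the right intersection, the right image, and compatible images in those quotients.

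The main obstacle is the class field theory computation over the degree-$8$ field $K_{1,1}$, specifically determining the unit filtration modulo squares and its coinvariants precisely enough to read off the break points $5,13,29$. I would first try to do this by hand using a uniformizer of $\Q_2(\sqrt{-1},\sqrt2)$ and explicit squares. If that becomes unwieldy, I would verify the images by a direct Magma computation of the conductors of the relevant quartic and octic subextensions.
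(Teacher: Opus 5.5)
\textbf{Gap: the listed subgroups depend on the lifts.} Your route is essentially the paper's. Write $\mc{G}:=\mc{G}_{2,2}(\Q_2)$. You apply class field theory to $\mc{G}/H\cong\mc{G}_{1,1}(\Q_2)$ and to $H=\mc{G}_{1,1}(K_{1,1})_{\Gamma_{\Q_2}}$, use $H_i=\mc{G}_i\cap H$, and apply Herbrand's theorem to $\mc{G}_iH/H$. The only difference is that the paper reads the top-layer conductors $0,2,4,6,8$ off LMFDB and identifies the dual basis via the $C_4$/$D_4$ fields of \cref{tab:LMFDB fields associated to characters}, instead of computing $U^n_{K_{1,1}}$ modulo squares. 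The problem is the assembly step. The check you plan, that the listed subgroups are independent of the lifts, fails. In fact the proposition is false for arbitrary lifts at the levels $-1<i\le 1$ and $1<i\le 5$:
\begin{itemize}
\item Let $L/\Q_2$ be the unramified quartic extension. Since $\Z/4\Z\in\mc{N}_{2,2}$, we have $L\subset K_{2,2}$, and $\mc{G}_0$ acts trivially on $L$.
\item Since $\bar\sigma_5$ is nontrivial on $\Q_2(\sqrt5)\subset L$, $\sigma_5|_L$ generates $\Gal(L/\Q_2)$. Hence $\sigma_5^2\in H$ but $\sigma_5^2\notin\mc{G}_0$.
\item Both $\sigma_{-1}$ and $\sigma_{-1}\sigma_5^2$ lift $\bar\sigma_{-1}$, and $\langle\sigma_{-1},[\sigma_{-1},\sigma_5],\sigma_2\rangle$ contains whichever lift was used to build it. If this group equalled $\mc{G}_0$ for both choices, then $\sigma_5^2\in\mc{G}_0$, a contradiction.
\item Replacing $\sigma_2$ by $\sigma_2\sigma_5^2$ gives the same contradiction for $1<i\le 5$, since $\mc{G}_5\subset\mc{G}_0$.
\end{itemize}
Centrality of $H$ (\cref{lem:identities in N22}) makes squares and commutators of lifts canonical, but not the lifts themselves. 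So only the groups for $i=-1$ and $i>5$ are lift-independent.

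\textbf{What can be proved, and further cautions.} Your coset-detection fallback can prove a corrected statement. The displayed groups are the ramification groups for lifts normalized so that $\sigma_{-1}\in\mc{G}_0$ (i.e.\ $\sigma_{-1}|_L=1$) and $\sigma_2\in\mc{G}_5$, with $\sigma_5$ arbitrary. The condition $\sigma_2\in\mc{G}_5$ means $\sigma_2$ is trivial on $L$ and on a $D_4$-extension containing $\Q_2(\sqrt{-1},\sqrt5)$; such extensions have upper breaks $\le 1$. The paper's proof does not address this point either: it passes from the exact sequences to the subgroup presentations with ``From this we deduce''. So your suspicion was justified. The normalization is harmless downstream. \Cref{thm:bijection between 2-adic homomorphisms and admissible triples}(ii) must be read with normalized lifts. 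The sums in \cref{cor:2-adic mass in terms of admissible triples} run over all admissible triples and are unchanged, because re-choosing lifts permutes admissible triples compatibly with liftability.

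One further caution about identifying the dual basis via $D_4$-quotients. The character a $D_4$-quotient cuts out on $H$ is also nonzero on $\sigma_k^2$ whenever $\bar\sigma_k$ maps to an element of order $4$. For example, the $D_4$-fields containing $\Q_2(\sqrt{-1},\sqrt2)$ that are cyclic over $\Q_2(\sqrt{-1})$ cut out $\chi_{-1,2}+\chi_{2,2}$, which has conductor $8$. Only those cyclic over $\Q_2(\sqrt{-2})$ cut out $\chi_{-1,2}$.
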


\begin{table}
    \centering
    \begin{tabular}{|c|c|c|c|c|c|c|}
        \hline
        $\chi \in W$ & $\rho \in \mc{G}_{1,1}(K_{1,1})_{\Gamma_{\Q_2}}$ & $F_{\chi}/\Q_2$ & $E_{\chi}/K_{1,1}$ & $E_{\chi}'/F_{\chi}$ & $\Gal(E_{\chi}'/\Q_2)$ & Exp. \\
        \hline
        $\chi_{5,5}$ & $\sigma_{5}^2$ & \href{https://www.lmfdb.org/padicField/2.2.1.0a1.1}{$\Q_2(\sqrt{5})$} & \href{https://www.lmfdb.org/padicField/2.4.4.32b1.1}{\texttt{2.4.4.32b1.1}} & \href{https://www.lmfdb.org/padicField/2.4.1.0a1.1}{\texttt{2.4.1.0a1.1}} & $C_4$ &  0 \\
        $\chi_{-1,5}$ & $[\sigma_{-1},\sigma_{5}]$ & \href{https://www.lmfdb.org/padicField/2.2.2.4a1.1}{$\Q_2(\sqrt{-1},\sqrt{5})$} & \href{https://www.lmfdb.org/padicField/2.2.8.36b1.1}{\texttt{2.2.8.36b1.1}} & \href{https://www.lmfdb.org/padicField/2.2.4.12a1.1}{\texttt{2.2.4.12a1.1}} & $D_4$ &  2 \\
        $\chi_{5,2}$ & $[\sigma_{5},\sigma_{2}]$ & --- & --- & --- & --- &  (4) \\
        $\chi_{-1,2}$ & $[\sigma_{-1},\sigma_{2}]$ & \href{https://www.lmfdb.org/padicField/2.1.4.8b1.1}{$\Q_2(\sqrt{-1},\sqrt{2})$} & \href{https://www.lmfdb.org/padicField/2.2.8.44d1.44}{\texttt{2.2.8.44d1.44}} & \href{https://www.lmfdb.org/padicField/2.1.8.22d1.9}{\texttt{2.1.8.22d1.9}} & $D_4$ &  6 \\
        $\chi_{2,2}$ & $\sigma_{2}^2$ & \href{https://www.lmfdb.org/padicField/2.1.2.3a1.3}{$\Q_2(\sqrt{2})$} & \href{https://www.lmfdb.org/padicField/2.2.8.48c1.1}{\texttt{2.2.8.48c1.1}} & \href{https://www.lmfdb.org/padicField/2.1.4.11a1.11}{\texttt{2.1.4.11a1.11}} & $C_4$ &  8 \\
        \hline
    \end{tabular}
    \caption{Given a basis element $\chi \in W$ with dual element $\rho \in \mc{G}_{1,1}(K_{1,1})_{\Gamma_{\Q_2}}$, we record a multiquadratic field $F_{\chi}/\Q_2$ and a quadratic extension $E_{\chi}/K_{1,1}$ that descends to a quadratic extension $E_{\chi}'/F_{\chi}$.
    The fourth and fifth columns use \cite{LMFDB} \texttt{p-adic field} labels.
    The last column records the \cite{LMFDB} relative discriminant exponent of $E_{\chi}/K_{1,1}$, which is equal to the conductor exponent of $\chi$.
    The deduced conductor exponent for $\chi_{5,2}$ is parenthetically displayed in the last column.}
    \label{tab:LMFDB fields associated to characters}
\end{table}

\begin{proof}
    \cite{LMFDB} contains a complete list of all $2$-adic number fields of degree $\le 16$, which in the range considered here lists the relevant fields together with their discriminants, Galois groups, and subfield data.
    Notice that $K_{1,1} = \Q_2(\sqrt{-1},\sqrt{5},\sqrt{2})$ corresponds to \cite[\href{https://www.lmfdb.org/padicField/2.2.4.16b1.1}{\texttt{2.2.4.16b1.1}}]{LMFDB}.

    The API request \cite[\href{https://www.lmfdb.org/api/lf_fields/?p=i2&n=i16&galois_degree=i16&subfield=cs2.8.16.6&_fields=new_label,c&_sort=c}{query link}]{LMFDB} confirms that there are $2^5 - 1 = 31$ degree $16$ Galois $2$-adic number fields containing $K_{1,1}$, and the set of absolute discriminant exponents of these fields is $\{32,36,40,44,48\}$.
    In particular, the tower property for discriminants implies that the set of relative discriminant exponents with respect to $K_{1,1}$ is $\{0,2,4,6,8\}$.
    By \cref{lem:five dimensional top coinvariant Kummer layer}, such fields correspond to the $31$ nontrivial quadratic characters of $K_{1,1}$.
    We let $W$ denote the $\F_2$-vector space of quadratic characters of $K_{1,1}$.
    By local class field theory, the set of conductor exponents of $\chi \in W\backslash\{0\}$ is $\{0,2,4,6,8\}$.

    Local class field theory implies that the upper numbering ramification groups $\mc{G}_{1,1}(K_{1,1})_{\Gamma_{\Q_2}}^\bullet = (\mc{G}_{1,1}(K_{1,1})_{\Gamma_{\Q_2}}^\bullet)_{u \ge -1}$ under the Kummer pairing $\mc{G}_{1,1}(K_{1,1})_{\Gamma_{\Q_2}} \times W \to \F_2$ satisfy the orthogonality relations
    \[
    \mc{G}_{1,1}(K_{1,1})_{\Gamma_{\Q_2}}^u = (W^u)^{\perp}, \quad 
    W^u := \{\chi \in W\backslash\{0\} : \chi \text{ has conductor exponent} < u+1\} \cup \{0\}.
    \]
    In particular, $\mc{G}_{1,1}(K_{1,1})_{\Gamma_{\Q_2}}^\bullet$ has upper numbering ramification breaks at $-1,1,3,5,7$.
    Therefore, the filtration $0 = W^{-1} < W^{1} < W^{3} < W^{5} < W^{7} < W$ is a complete flag $W^\bullet$.
    
    Our goal is to compute the flag $W^\bullet$ explicitly in terms of the basis $\chi_{5,5}, \chi_{2,2},\chi_{-1,5},\chi_{-1,2},\chi_{5,2}$, which is Kummer dual to the basis $\sigma_5^2, \sigma_2^2, [\sigma_{-1},\sigma_5], [\sigma_{-1},\sigma_2],[\sigma_5,\sigma_2]$ of $\mc{G}_{1,1}(K_{1,1})_{\Gamma_{\Q_2}}$.
    \cref{tab:LMFDB fields associated to characters} provides a method for computing the conductor exponents of this basis, which we describe as follows.
    For each basis element $\chi \in W$ in \cref{tab:LMFDB fields associated to characters}, let $\rho \in \mc{G}_{1,1}(K_{1,1})_{\Gamma_{\Q_2}}$ be the dual element.
    By construction, $\rho$ is the image of either a lifted square (resp.\ commutator) of an element (resp.\ elements) $\bar{\sigma}_{\rho}$ among $\bar{\sigma}_{-1},\bar{\sigma}_{5},\bar{\sigma}_{2}$.
    After choosing the basis $-1,5,2$ for $\Q_2^{\times}/\Q_2^{\times 2}$, Kummer theory associates to this collection of elements $\bar{\sigma}_{\rho}$ a multiquadratic field $F_{\chi}/\Q_2$ whose Galois group is generated by the images of the elements $\bar{\sigma}_{\rho}$.
    We then use \cite{LMFDB} to find a quadratic extension $E_{\chi}/K_{1,1}$ that descends to a quadratic extension $E_{\chi}'/F_{\chi}$ whose Galois group fits in an extension
    \[
    0 \to \F_2 \to \Gal(E_{\chi}'/\Q_2) \to \Gal(F_{\chi}/\Q_2) \to 1.
    \]
    We observe from \cref{tab:LMFDB fields associated to characters} that this extension is either $0 \to \F_2 \to C_4 \to \F_2 \to 0$ or $0 \to \F_{2} \to D_4 \to \F_2^{\oplus 2} \to 0$; in either case, we find that $\Gal(E_{\chi}'/F_{\chi})$ is generated by a lifted square (resp.\ commutator) of the image of the element (resp.\ elements) $\bar{\sigma}_{\rho}$, i.e.\ $\Gal(E_{\chi}'/F_{\chi})$ is generated by the image of $\rho$.
    This proves that $E_{\chi}/K_{1,1}$ corresponds to $\chi$ via Kummer theory, and in particular that the relative discriminant exponents in \cref{tab:LMFDB fields associated to characters} correspond to the conductor exponents of $\chi$.

    Thus, we have shown that $\chi_{5,5},\chi_{-1,5},\chi_{-1,2},\chi_{2,2}$ have conductor exponents $0,2,6,8$ respectively.
    We deduce that $\chi_{5,2}$ has conductor exponent $4$ because $W^\bullet$ is a complete flag.
    This implies that $\mc{G}_{1,1}(K_{1,1})_{\Gamma_{\Q_2}}$ has upper numbering ramification breaks at $-1,1,3,5,7$ given by consecutively removing the elements $\sigma_{5}^2,[\sigma_{-1},\sigma_5],[\sigma_5,\sigma_2],[\sigma_{-1},\sigma_2],\sigma_2^2$ from the basis of $\mc{G}_{1,1}(K_{1,1})_{\Gamma_{\Q_2}}$.
    To convert to the lower numbering ramification breaks on $\mc{G}_{1,1}(K_{1,1})_{\Gamma_{\Q_2}}$, it is a standard calculation using the Herbrand function that the lower numbering ramification breaks are $-1,1,5,13,29$.
    
    To obtain the ramification filtration on $\mc{G}_{2,2}(\Q_2)$, we use compatibility of the lower-numbering filtration with subgroups and of the upper-numbering filtration with quotient groups \cite{NeukirchANT}.
    Namely, for all $x \in \R_{\ge -1}$ one has a short exact sequence of groups
    \[
    0 \to (\mc{G}_{1,1}(K_{1,1})_{\Gamma_{\Q_2}})_x \to 
    \mc{G}_{2,2}(\Q_2)_x \to 
    \mc{G}_{1,1}(\Q_2)^{H(x)} \to 0
    \]
    where $H(x)$ is an explicitly computable piecewise linear function depending only on the ramification filtrations of $\mc{G}_{1,1}(K_{1,1})_{\Gamma_{\Q_2}}$ and $\mc{G}_{1,1}(\Q_2)$.

    The upper numbering ramification breaks of $\mc{G}_{1,1}(\Q_2)$ are $-1,1,2$; indeed, the fields $\Q_2(\sqrt{5})$, $\Q_2(\sqrt{-1})$, $\Q_2(\sqrt{2})$ have discriminant exponents $-1+1,1+1,2+1$ respectively.
    Having computed the ramification filtrations of both $\mc{G}_{1,1}(K_{1,1})_{\Gamma_{\Q_2}}$ and $\mc{G}_{1,1}(\Q_2)$, we can explicitly compute $H(x)$ and deduce that $\mc{G}_{2,2}(\Q_2)$ has lower numbering ramification breaks at $-1,1,5,13,29$, and that the groups
    \[
    \mc{G}_{2,2}(\Q_2) = \mc{G}_{2,2}(\Q_2)_{-1} >
    \mc{G}_{2,2}(\Q_2)_{1} > 
    \mc{G}_{2,2}(\Q_2)_{5} > 
    \mc{G}_{2,2}(\Q_2)_{13} >
    \mc{G}_{2,2}(\Q_2)_{29} > 1
    \]
    are presented by the exact sequences
    \[
    \mc{G}_{2,2}(\Q_2)_{-1} =
    \langle \sigma_{-1}, \sigma_{5}, \sigma_2\rangle,
    \]
    \[
    0 \to \langle [\sigma_{-1},\sigma_5], [\sigma_{5},\sigma_{2}], [\sigma_{-1},\sigma_{2}], \sigma_{2}^2\rangle
    \to \mc{G}_{2,2}(\Q_2)_{1} \to \langle \bar{\sigma}_{-1}, \bar{\sigma}_{2} \rangle
    \to 0,
    \]
    \[
    0 \to \langle [\sigma_{5},\sigma_{2}], [\sigma_{-1},\sigma_{2}], \sigma_{2}^2\rangle
    \to \mc{G}_{2,2}(\Q_2)_{5} \to \langle \bar{\sigma}_{2} \rangle
    \to 0,
    \]
    \[
    0 \to \langle [\sigma_{-1},\sigma_{2}], \sigma_{2}^2\rangle
    \to \mc{G}_{2,2}(\Q_2)_{13} \to 1,
    \]
    \[
    0 \to \langle\sigma_{2}^2\rangle
    \to \mc{G}_{2,2}(\Q_2)_{29} \to 1.
    \]
    From this we deduce the desired subgroup presentations, completing the proof.
\end{proof}

\subsection{Finishing the proof}

\begin{proof}[Proof of \cref{thm:intro thm presentation and ramification filtration at 2}]
    This immediately follows from \cref{subsec:ramification filtration on mcG22}.
\end{proof}

\begin{proof}[Proof of \cref{thm:bijection between 2-adic homomorphisms and admissible triples}]
    In the proof of \cref{lem:five dimensional top coinvariant Kummer layer}, we see that $\mc{G}_{2,2}(\Q_2)$ is the group of nilpotency class $2$ and exponent $4$ generated by three elements $\sigma_{-1},\sigma_5,\sigma_{2}$ subject to the single additional relation $\sigma_{-1}^2 = [\sigma_5,\sigma_2]$.
    This shows that $\Hom(\Gamma_{\Q_2},G) \cong \Hom(\mc{G}_{2,2}(\Q_2),G)$ is in bijection with admissible triples of $G$, and this bijection is compatible with lifting admissible triples along a homomorphism $\pi:\widetilde{G} \to G$, where $\widetilde{G} \in \mc{N}_{2,2}$.
    This proves \cref{thm:bijection between 2-adic homomorphisms and admissible triples}(i) and (iii).
    
    By the $c=m=2$ version of \cref{lem:2-adic mass in terms of conductor discriminant formula}, we want to show that $d_{\rm reg}(\varphi) = d(g_{-1},g_5,g_2)$ if $\varphi \in \Hom(\Gamma_{\Q_2},G)$ corresponds to $(g_{-1},g_5,g_2)$ under this bijection.
    By \cref{prop:ramification filtration}, we have that
    \begin{align*}
        d_{\rm reg}(\varphi) &=
        2
        \sum_{i \ge 0} 
        \frac{|\mc{G}_{2,2}(\Q_2)_i|}{|\mc{G}_{2,2}(\Q_2)_0|}
        \left(
            1 - \frac{1}{|\varphi(\mc{G}_{2,2}(\Q_2)_i)|}
        \right)
        \\ &= 
        2\left(
            2\cdot \frac{64}{64} \left(1 - \frac{1}{|G_1|}\right) +
            4\cdot \frac{16}{64} \left(1 - \frac{1}{|G_5|}\right) + 
            8\cdot \frac{4}{64} \left(1 - \frac{1}{|G_{13}|}\right) + 
            16\cdot \frac{2}{64} \left(1 - \frac{1}{|G_{29}|}\right)
        \right)
        \\ &=
        d(g_{-1},g_{5},g_{2}).
    \end{align*}
    This proves \cref{thm:bijection between 2-adic homomorphisms and admissible triples}(ii).
\end{proof}

\bibliographystyle{alpha-fullkey}
\nocite{CremonaJonesSutherlandVoight2021LMFDB}
\bibliography{bib.bib}

\end{document}